
\documentclass{amsart}
\usepackage[utf8x]{inputenc}
\usepackage{amsmath,amsthm,amsfonts,amssymb,latexsym, enumerate}
\usepackage[matrix, arrow]{xy}
\xyoption{arrow}

\begin{document}
\subjclass[2010]{20C20}
\thanks{Part of the  work on this paper was done  while the authors 
were visiting  the  University of California, Santa Cruz and they 
would like to thank  the Mathematics   Department  for its hospitality.
The second author acknowledges support from EPSRC grant
EP/M02525X/1.}

\theoremstyle{plain}
\newtheorem{thm}{Theorem}[section]
\newtheorem{lem}[thm]{Lemma}
\newtheorem{pro}[thm]{Proposition}
\newtheorem{cor}[thm]{Corollary}
\newtheorem{statement}[thm]{}

\theoremstyle{definition}
\newtheorem{que}[thm]{Question}
\newtheorem{rem}[thm]{Remark}
\newtheorem{defi}[thm]{Definition}
\newtheorem{Question}[thm]{Question}
\newtheorem{Conjecture}[thm]{Conjecture}
\newtheorem{exa}[thm]{Example}
\newtheorem{Notation}[thm]{Notation}

\def\CF{{\mathcal{F}}}  
\def\CG{{\mathcal{G}}}
\def\CH{{\mathcal{H}}}
\def\CO{{\mathcal{O}}}
\def\CP{{\mathcal{P}}}
\def\CR{{\mathcal{R}}}

\def\OD{{\mathcal{O}D}}
\def\OG{{\mathcal{O}G}}
\def\OGb{{\mathcal{O}Gb}}
\def\OHc{{\mathcal{O}Hc}}
\def\OH{{\mathcal{O}H}}
\def\ON{{\mathcal{O}N}}
\def\OP{{\mathcal{O}P}}
\def\OQ{{\mathcal{O}Q}}
\def\OR{{\mathcal{O}R}}

\def\Br{\mathrm{Br}} 
\def\dim{\mathrm{dim}}
\def\codim{\mathrm{codim}}
\def\coker{\mathrm{coker}}
\def\End{\mathrm{End}}
\def\Hom{\mathrm{Hom}}
\def\IBr{\mathrm{IBr}} 
\def\Im{\mathrm{Im}} 
\def\Ind{\mathrm{Ind}} 
\def\Irr{\mathrm{Irr}} 
\def\Id{\mathrm{Id}} 
\def\id{\mathrm{id}} 
\def\lcm{\mathrm{lcm}} 
\def\Aut{\mathrm{Aut}} 
\def\Ker{\mathrm{Ker}} 
\def\mod{\mathrm{mod}} 
\def\modh{{\operatorname{mod-\!}}}
\def\Mod{\mathrm{Mod}} 
\def\rank{\mathrm{rank}}
\def\Res{\mathrm{Res}} 
\def\op{\mathrm{op}}
\def\rk{\mathrm{rk}} 
\def\SL{\mathrm{SL}}
\def\Syl{\mathrm{Syl}} 
\def\Tr{\mathrm{Tr}} 
\def\tr{\mathrm{tr}} 
\def\Gal{\mathrm{Gal}} 
\def\ten{\otimes}
\def\tenA{\otimes_{A}}
\def\tenB{\otimes_{B}}
\def\tenO{\otimes_{\mathcal{O}}}
\def\tenOP{\otimes_{\mathcal{O}P}}
\def\tenOQ{\otimes_{\mathcal{O}Q}}
\def\tenOR{\otimes_{\mathcal{O}R}}
\def\tenK{\otimes_K}
\def\tenk{\otimes_k}
\def\tenkP{\otimes_{kP}}
\def\tenkR{\otimes_{kR}}

\def\C{{\mathbb C}}  
\def\F{{\mathbb F}}               
\def\Q{{\mathbb Q}}               
\def\Z{{\mathbb Z}}               
\def\Fp{{\mathbb F_p}}          
\def\Fq{{\mathbb F_q}}

\def\foc{{\mathfrak {foc}}}
\title[Descent of equivalences] {Descent of equivalences and character 
bijections}

\author{Radha Kessar, Markus Linckelmann}
\address{Department of Mathematics, City, University  of London 
EC1V 0HB, United Kingdom}
\email{radha.kessar.1@city.ac.uk}

\begin{abstract}
Categorical equivalences between block algebras of finite groups - such 
as Morita and derived equivalences - are well-known to induce character 
bijections which commute with the Galois groups of field extensions.
This is the motivation for attempting to realise known Morita and 
derived equivalences over non splitting fields. This article presents 
various results on the theme of descent to appropriate subfields and 
subrings. We start with the observation that perfect isometries induced 
by a virtual Morita equivalence induce isomorphisms of centers in 
non-split situations, and explain connections with Navarro's 
generalisation of the Alperin-McKay conjecture.  We  show that 
Rouquier's splendid Rickard complex for blocks with cyclic defect groups 
descends to the non-split case.  We also prove a descent theorem for  
Morita equivalences  with endopermutation source.
\end{abstract}

\maketitle

\bigskip

\begin{center}
\today
\end{center}


\section{Introduction}     

Throughout the paper, $p$ is a prime number. Let $(K, \CO, k)$ be a
$p$-modular system; that is, $\CO$ is a complete discrete valuation ring 
with residue field $k=$ $\CO/J(\CO)$ of characteristic $p$ and field of 
fractions $K$ of characteristic zero. We are interested in capturing 
equivariance properties of various standard equivalences (such as  
Morita, Rickard or $p$-permutation  equivalences) in the block theory of 
finite groups. A common  context   for these is  the notion of virtual 
Morita equivalence which we recall.

Let $A$, $B$, $C$ be $\CO$-algebras, finitely generated free as 
$\CO$-modules. Denote by ${\modh}A$ the category of finitely generated  
left $A$-modules and by $\CR(A)$ the Grothendieck group of ${\modh}A$    
with respect to split exact sequences. Denoting by $[M]$ the element of 
$\CR(A)$  corresponding  to  the finitely generated $A$-module $M$,   
$\CR(A)$ is a free abelian group  with basis the set of all elements
$[M]$,  where $M$ runs through a set of representatives of the 
isomorphism classes of finitely generated indecomposable $A$-modules.    

Denote by $\CR(A,B)$ the group  $\CR(A\otimes_{\CO} B^{\op})$ and by 
$\CP(A, B)$ the subgroup of $\CR(A,B)$ generated by elements $[M]$,  
where $M$ is an $(A,B)$-bimodule which is finitely generated projective 
as left $A$-module and as right $B$-module. We let 
$-\cdot_B-:  \CR(A, B) \times \CR (B, C)  \to$ $\CR(A,C)$,
$M \times N \mapsto  M\cdot_BN$  be the  group homomorphism induced by  
tensoring over $B$, that is such that $[X]\cdot_B [Y]= [X\otimes_B Y]$  
for  all  finitely generated $(A, B)$-bimodules $X$ and 
$(B,C)$-bimodules $Y$. If $X$ is an $(A,B)$-bimodule, then its 
$\CO$-dual $X^\vee=\Hom_\CO(X,\CO)$ is a $(B,A)$-bimodule. The algebra 
$A$ is called {\it symmetric} if $A\cong$ $A^\vee$ as $(A,A)$-bimodules. 
If $A$ and $B$ are symmetric, and if $X$  is an $(A,B)$-bimodule which 
is finitely generated projective as left $A$-module and as right 
$B$-module, then the $(B, A)$-module $X^{\vee}$ is again finitely 
projective as left $B$-module and as right $A$-module (this holds
more generally if $A$ and $B$ are relatively $\CO$-injective). We denote 
in that case by $M \to M^{\vee}$ the unique homomorphism $\CP(A, B) \to$
$\CP(B,A)$ such that $[X]^{\vee}=$ $[X^{\vee}]$ for any $(A,B)$-bimodule
$X$ which is finitely generated projective as left $A$-module and as 
right $B$-module. Note that in the above we may replace $\CO$ by any 
complete local ring, and in particular by a field, and we will do so 
without  further comment.

\begin{defi}  
Let $A$ and $B$ be $\CO$-algebras, finitely generated free as 
$\CO$-modules, and let $M\in$ $\CP(A,B)$ and $N \in$ $\CP(B,A)$. We say 
that $M$ and $N$ induce a {\it virtual Morita equivalence between $A$ 
and $B$} if $M\cdot_B\ N  =$ $[A]$ in $\CR(A,A)$ and $N\cdot_A\ M=$
$[B]$ in $\CR(B,B)$.
\end{defi}

\begin{rem}\label{rem:equiv2}
Let $A$ and $B$ be symmetric $\CO$-algebras. We will use without further 
comment the following well-known implications between the various levels 
of equivalences we consider in this paper. If $M$ is an $(A,B)$-bimodule 
which is finitely generated projective as a left and right module and 
which induces a Morita equivalence between $A$ and $B$, then $[M]$ and 
$[M^\vee]$ induce a virtual Morita equivalence. More generally, if $X$ 
is a Rickard complex of $(A,B)$-bimodules, then $[X]=$
$\sum_{i\in\Z} (-1)^i[X_i]$ and $[X^\vee]$ induce a virtual Morita
equivalence between $A$ and $B$. Following \cite{BoXu}, if $A$, $B$ are 
blocks of finite group algebras, then a virtual Morita equivalence 
between $A$ and $B$ given by $M$ and $M^\vee$ is called a {\it 
$p$-permutation equivalence} if $M$ can be written in the form $M=$ 
$[M_0]-[M_1]$, where $M_0$, $M_1$ are $p$-permutation $(A,B)$-bimodules 
which are finitely generated projective as left and right modules. In 
particular, if $X$ is a splendid Rickard complex of $(A,B)$-bimodules, 
then $[X]$ and $[X^\vee]$ induce a $p$-permutation equivalence. 
\end{rem}

Let $K'$ be an extension field of $K$. For an $\CO$-algebra $A$, we 
denote by $K'A$ the $K'$-algebra $K'\otimes_{\CO} A$, and for any
$A$-module $V$ we denote by $K'V$ the $K'A$-module $K'\tenO V$. 
The  functor $K'\otimes_{\CO} - : \modh A \to$ $\modh K'A$ induces a 
group homomorphism $[V] \mapsto [K'V]$ from $\CR(A) $  to $\CR(K'A)$, 
for all finitely generated $A$-modules $V$. We use analogous notation 
for bimodules. Let $\Aut(K'/K)$  denote the group of automorphisms of 
$K'$ which induce the identity on $K$. For $\sigma\in$ $\Aut(K'/K)$ and 
a $K'A$-module $U$ we denote by $\,^{\sigma}U$ the $\sigma $-twist of  
$U$, that is  $\, ^{\sigma}U$ is the $K'A$-module which is equal to $U$ 
as a $KA$-module and on which  $\lambda \otimes a $  acts as 
$\sigma^{-1}(\lambda) \otimes  a$ for all $\lambda \in K'$ and all 
$a \in KA $. We use the analogous notation for the induced map on   
$\CR(K'A)$.

If $K'A$ is a semisimple algebra, we denote by $\Irr(K'A)$ the subset  
of $\CR(K'A)$ consisting of the elements $[S]$, where $S$ runs through a 
set of representatives of isomorphism classes of simple $K'A$-modules. 
Then  $\Irr(K'A)$ is a $\Z$-basis of $\CR(K'A)$. For $\chi=[S] \in $
$\Irr (K'A)$ we denote by $e_{\chi}$ the unique primitive idempotent of 
$Z(K'A)$ such that $ e_{\chi}  S \ne 0 $.  

The following general result on symmetric $\CO$-algebras is the starting
point of the phenomenon we wish to exhibit.  

\begin{thm} \label{thm:introvirtual}   
Let $(K, \CO, k)$ be a $p$-modular system.
Let $A$ and $B$ be  symmetric $\CO$-algebras and let  $K'$ be an 
extension field of $K$ such that $K'A$ and $K'B$ are split semisimple.
Suppose that $M \in \CP(B,A)$ is such that $M$ and $M^{\vee} $ induce 
a virtual Morita equivalence between $A$ and $B$.     
Then there exists a bijection $I : \Irr(K'A) \to \Irr(K'B)$ and signs 
$\epsilon_{\chi} \in  \{\pm 1\}$ for any $\chi\in$ $\Irr(K'A)$ such that 
$$\epsilon_{\chi} I(\chi) =  K'M\cdot_{K'A}\ \chi$$
for all $ \chi \in \Irr(K'A) $. Moreover, the following holds.

\begin{enumerate} [(a)]  
\item 
The algebra isomorphism $Z(K'A)\cong$ $Z(K'B)$ sending $e_{\chi}$ to 
$e_{I(\chi)} $ for all $\chi\in$ $\Irr(K'A)$  induces an $\CO$-algebra 
isomorphism  $Z(A) \cong Z(B) $.

\item     
The bijection $I$ commutes with $\Aut(K'/K)$; that is, we have 
$I(\,^{\sigma}\chi)= \,^\sigma  I (\chi)$ for all $\sigma \in$
$\Aut(K'/K)$ and all $\chi \in$ $\Irr(K'A)  $.  
\end{enumerate}
\end{thm}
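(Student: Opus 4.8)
The plan is to work first over the splitting field $K'$, where $K'A$ and $K'B$ are split semisimple, and transport the resulting data down to $\CO$. Since $K'M$ lies in $\CP(K'B, K'A)$ and $K'M^\vee$ in $\CP(K'A, K'B)$, and since tensoring over $\CO$ is compatible with $\cdot$, the pair $(K'M, K'M^\vee)$ still induces a virtual Morita equivalence between $K'A$ and $K'B$. So the first step is to prove the theorem in the split semisimple case: here $\CR(K'A)$ has $\Z$-basis $\Irr(K'A)$, each simple module is one-dimensional over its endomorphism ring, and $-\cdot_{K'A}-$ composed with $-\cdot_{K'B}-$ is the identity on $\CR(K'A)$ and on $\CR(K'B)$. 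Writing $K'M\cdot_{K'A}\chi = \sum_\psi m_{\psi\chi}\psi$ with $m_{\psi\chi}\in\Z$, the two identities $K'M^\vee\cdot_{K'B}(K'M\cdot_{K'A}\chi)=\chi$ and the dual one force the integer matrix $(m_{\psi\chi})$ to be invertible over $\Z$ with inverse the transpose-type matrix coming from $K'M^\vee$; a standard orthogonality/positivity argument (using that $M^\vee$ is the $\CO$-dual, so its contribution is a transpose up to the symmetrizing forms) shows each row and column has exactly one nonzero entry, equal to $\pm 1$. This yields the bijection $I$ and the signs $\epsilon_\chi$.

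**Descent of the center (part (a)):**

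For (a), the key point is that $-\cdot_{K'A}-$ and $-\cdot_{K'B}-$ induce mutually inverse isomorphisms $Z(K'A)\to Z(K'B)$: an element of the center acts on each simple module by a scalar, and the virtual Morita equivalence matches up these scalars according to $I$, sending $e_\chi\mapsto e_{I(\chi)}$. Concretely, the functor $K'M\otimes_{K'A}-$ and its adjoint give, via the natural transformations witnessing $K'M\cdot_{K'A}K'M^\vee=[K'B]$ (on the Grothendieck-group level, but refined to the module category after fixing actual bimodule representatives of $M_0,M_1$), an isomorphism of centers; one can also argue purely in terms of the scalars $m_{\psi\chi}=\pm\delta$. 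The real content is that this isomorphism $Z(K'A)\cong Z(K'B)$ restricts to an isomorphism $Z(A)\cong Z(B)$. Here I would use that $Z(A) = Z(K'A)\cap A$ inside $K'A$ (more precisely $Z(A)=A\cap Z(KA)$ and $Z(KA)=Z(K'A)^{\Aut(K'/K)}$), together with the fact that the isomorphism is induced by a genuine map of $\CO$-modules: the map $Z(A)\to Z(B)$ should be realized as $z\mapsto$ (the scalar action of $z$ transported through $M$), and one must check this lands in $Z(B)$ and not merely in $Z(K'B)$. Since $M\in\CP(B,A)$ is an honest virtual $\CO$-bimodule, the transported element is a $\Z$-linear combination of things defined over $\CO$, hence lies in $B$; being central in $K'B$ it lies in $Z(B)$. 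The inverse is built the same way from $M^\vee$, giving the claimed $\CO$-algebra isomorphism.

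**Galois equivariance (part (b)):**

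For (b), the input is that $M$, and hence $K'M$, is defined over $\CO$ — that is, $K'M = K'\otimes_\CO M$ — so $K'M$ is fixed by every $\sigma\in\Aut(K'/K)$ in the sense that ${}^\sigma(K'M)\cong K'M$ as $(K'B,K'A)$-bimodules. Then for any $\chi\in\Irr(K'A)$ one computes ${}^\sigma(K'M\cdot_{K'A}\chi) = {}^\sigma(K'M)\cdot_{K'A}{}^\sigma\chi = K'M\cdot_{K'A}{}^\sigma\chi$, where the first equality is the compatibility of the $\sigma$-twist with tensor products and the second uses the $\sigma$-invariance of $K'M$. Applying the relation $\epsilon_\chi I(\chi)=K'M\cdot_{K'A}\chi$ to both $\chi$ and ${}^\sigma\chi$, and noting $\epsilon_{{}^\sigma\chi}=\epsilon_\chi$ (the sign is intrinsic, read off from the symmetrizing forms which are Galois-stable, or simply from the fact that ${}^\sigma$ permutes $\Irr$ and preserves the $\pm1$ coefficient), we get $\epsilon_\chi\,{}^\sigma I(\chi) = \epsilon_\chi\, I({}^\sigma\chi)$, hence $I({}^\sigma\chi)={}^\sigma I(\chi)$.

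**Main obstacle:**

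The step I expect to require the most care is the descent in part (a): going from an isomorphism $Z(K'A)\cong Z(K'B)$ to one of the $\CO$-forms $Z(A)\cong Z(B)$. One must verify that the isomorphism, a priori only defined after extending scalars to $K'$, carries $Z(A)$ into $B$ (equivalently, is compatible with the $\Aut(K'/K)$-action, which is where part (b) feeds back in) and that it is integral — sends $\CO$-lattice to $\CO$-lattice — which relies on $M$ being a bona fide element of $\CP(B,A)$ over $\CO$ rather than merely over $K'$. Making this precise means pinning down the map on centers as $z\mapsto$ (unique element acting on $K'M\cdot_{K'A}S$ the way $z$ acts on $S$, for all simple $S$) and checking this formula respects the $\CO$-structure on both sides; once the formula is in hand, integrality and Galois-stability are checked directly.
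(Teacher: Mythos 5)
Your construction of the bijection $I$ and the signs $\epsilon_\chi$ is essentially the paper's Lemma~\ref{signsLemma} (an integer matrix $A$ with $AA^{t}=\mathrm{id}$ is a signed permutation matrix), and your argument for (b) via $^{\sigma}(K'M)\cong K'M$ and compatibility of Galois twists with tensor products is correct and even a little more direct than the paper's route (which deduces (b) from the $K$-rationality of the center isomorphism established in (a)).

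The gap is in part (a), at exactly the point you flag as the main obstacle. You assert that the isomorphism $Z(K'A)\to Z(K'B)$, $e_\chi\mapsto e_{I(\chi)}$, ``is induced by a genuine map of $\CO$-modules'' because $M\in\CP(B,A)$ is defined over $\CO$, so that ``the transported element is a $\Z$-linear combination of things defined over $\CO$.'' This conflates two different maps. The map of centers that $M$ genuinely induces over $\CO$ is the (additive extension of the) transfer map $\tr_{M^\vee}:Z(A)\to Z(B)$ coming from the adjunction units and counits for the actual bimodules $M_0$, $M_1$; it does map $Z(A)$ into $Z(B)$, but it is only a linear isomorphism, not unital and not multiplicative, and it sends $e_\chi$ to $\nu_\chi e_{I(\chi)}$ for scalars $\nu_\chi\in (K')^{\times}$ that need not equal $1$. (For a virtual equivalence there are no natural transformations ``witnessing'' $K'M\cdot_{K'A}K'M^\vee=[K'B]$ at the level of functors, so the Morita-theoretic identification $Z(B)\cong\End_{B\otimes A^{\op}}(M)\cong Z(A)$ you implicitly invoke is unavailable.) Conversely, the algebra isomorphism $e_\chi\mapsto e_{I(\chi)}$ is defined a priori only over $K'$, and nothing in your argument shows it carries $Z(A)$ into $B$. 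Your fallback, $Z(KA)=Z(K'A)^{\Aut(K'/K)}$, fails for a general extension $K'/K$ (which need not be Galois, or even algebraic over a splitting field) and would in any case be circular with (b). The paper closes this gap by a normalization: it sets $u=\tr_M(1_B)\in Z(A)$ and $v=\tr_{M^\vee}(1_A)\in Z(B)$, defines $\alpha(z)=\tr_{M^\vee}(uz)$ and $\beta(y)=\tr_M(vy)$, and then computes (using $1_A=\tr_M(v)=\sum_\chi\mu_\chi e_\chi$ and the linear independence of the $e_\chi$) that all the discrepancy scalars equal $1$, so that $\alpha(e_\chi)=e_{I(\chi)}$. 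Since $\alpha$ and $\beta$ are built from transfer maps applied to elements of $Z(A)$ and $Z(B)$, they preserve the $\CO$-forms by construction, which is the content of (a). Some such bridging argument is needed; without it your proof of (a) is incomplete.
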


\begin{rem} \label{rem:equiv1}
By a result of Brou\'e \cite[1.2]{Broue90}, a virtual Morita equivalence 
between two blocks of finite group algebras over $\CO$ given by a 
virtual bimodule and its dual induces a perfect isometry. In particular,
if $A$ and $B$ in Theorem \ref{thm:introvirtual} are blocks of finite 
group algebras, then the bijection $I$ and the signs in the theorem are 
together a perfect isometry. Not every  perfect isometry is induced by a 
virtual Morita  equivalence, but the advantage of virtual Morita 
equivalences is that they are  defined for arbitrary algebras. 
Brou\'e's abelian defect group conjecture, in the version predicting
a Rickard equivalence between a block with an abelian defect group
and the Brauer correspondent of that block, in conjunction with
Remark \ref{rem:equiv2}, implies therefore that the induced perfect 
isometry between a block and its Brauer correspondent is in fact 
induced by a virtual Morita equivalence. 
\end{rem}

We are interested in subgroups of $\Gal(\Q_n/\Q)$  which lift 
automorphisms in  characteristic $p$. For a positive integer $n$, we 
denote by $n_p$ (respectively $n_{p'}$) the  $p$-part (respectively 
$p'$-part) of $n $.

\begin{defi} 
Let $(K,\CO, k)$ be a $p$-modular system such that $k$ is perfect.
Let $\bar K$ be an algebraic closure of $K$. Let $n$ be a positive 
integer, denote by $\Q_n$ the  $n$-th cyclotomic subfield of $\bar K$, 
and let $k'$ be a splitting field of the polynomial $x^{n_{p'}}-1$ over 
$k$. We denote by $\CH_n$ the subgroup of $\Gal(\Q_n/\Q)$ consisting of 
those automorphisms $\alpha $ for which there exists a non-negative 
integer $u$ such that $\alpha(\delta) =$ $\delta^{p^u}$ for all 
$n_{p'}$-roots of unity $\delta$ in $\Q_n$. We denote by $\CH_{n,k}$ the 
subgroup of $\CH_n$ consisting of those automorphisms $\alpha $ for 
which there exists a non-negative integer $u$ and an element  $\tau \in 
\Gal(k'/k)  $ such that $\alpha (\delta) =$ $\delta^{p^u}$   for all 
$n_{p'}$-roots of unity $\delta$ in $\Q_n$  and $\tau( \eta) =$ 
$\eta^{p^u}$  for all $n_{p'} $-roots of  unity  $\eta $ in $k'$.

\end{defi}

Note that  $\CH_n $ is the image  under restriction in ${\mathbb Q}_n $ of the  Weil subgroup of the  absolute Galois group  of  the $p$-adic numbers (see Lemma~\ref{lem:stablering} and  Lemma~\ref{lem:cyclotomic}). Note also that   $\CH_{n,k} $ is independent of the choice of a splitting field  
$k'$ of $x^{n_{p'}}-1$ over $k$. With the notation of the above 
definition, for a finite group $G$, the set  
$\Irr(\bar K G)$ may be identified with the set of characters 
$\chi: G \to \bar K$ of simple $\bar KG$-modules. The group  
$\Aut(\bar K)$ acts on $\Irr(\bar K G)$ via  
$\,^{\sigma} \chi(g) := \chi(\sigma(g)) $,  $\chi \in $
$\Irr(\bar K G) $, $\sigma \in \Aut(\bar K)$. We say that {\it a 
positive integer $n$ is large enough for $G$} if the action of 
$\Aut(K) $ on $\Irr(\bar K G)$ factors through to an action of 
$\Gal(\Q_n/\Q)$ via the surjective homomorphism from $\Aut(\bar K)$ to 
$\Gal(\Q_n/\Q)$ induced by restriction to $\Q_n$. In particular, if $n$ 
is a multiple of $|G|$, then $n$ is large enough for $G$.

By a {\it block} of  $\CO G $  for $G$ a finite group we mean a  
primitive idempotent of the  center of the group algebra $\CO G $. If 
$b$ is a block of $\CO G$ we  denote by $\Irr(\bar KGb)$ the subset of 
$\Irr(\bar KG)$ consisting of the characters of simple 
$\bar K Gb $-modules. There are many open questions and conjectures    
around bijections between sets of irreducible characters of blocks which 
commute with the action of the groups $\CH_n$ and $\CH_{n,k} $, most 
notably Navarro's refinement of  the Alperin-McKay conjecture  
\cite[Conjecture B]{Nav04}.
 
Theorem \ref{thm:introvirtual} yields the following equivariance result 
for character bijections. The slogan is: categorical equivalences 
between blocks over absolutely unramified complete discrete valuation 
rings give rise to character bijections which commute with  the action  
of $\CH_{n,k} $.  Recall that  $\CO$ is said to be {\it absolutely 
unramified} if $J(\CO) = p\CO$.

\begin{thm}  \label{thm:intro-basic}    
Let $(K, \CO, k)$ be a $p$-modular system such that $k$ is perfect
and such that $\CO$ is absolutely  unramified. Let $\bar K$ be an 
algebraic closure of $K$. Let $G$ and $H$ be finite 
groups, let $b$ be a block of $\CO G $, $c$ a block of $\CO H$ and let  
$n$ be large enough  for $G$ and for $H$. A virtual Morita equivalence 
between  $\CO Gb$ and $\CO Hc$  given by a virtual bimodule $M$ and its 
dual $M^\vee$ induces a bijection  $ I : \Irr(\bar KGb) \to$
$\Irr(\bar KHc) $ satisfying  $\,^{\sigma} I(\chi) =$
$I (\,^{\sigma} \chi) $ for all $\chi \in$ $\Irr(\bar KGb)$ and all 
$\sigma \in$ $\CH_{n,k}$.  
\end{thm}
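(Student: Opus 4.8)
The plan is to deduce Theorem~\ref{thm:intro-basic} from Theorem~\ref{thm:introvirtual} by choosing the extension field $K'$ appropriately and then tracking the two Galois actions --- the one on $\bar K$ and the one on $k'$ --- through the isomorphism of centres produced by part~(a) of that theorem. Concretely, set $K'=\bar K$ (or, to stay inside a finite extension, take $K'$ to be a large enough finite extension of $K$ containing $\Q_n$ over which $K'\CO Gb$ and $K'\CO Hc$ are split semisimple; since $n$ is large enough for $G$ and $H$, the characters are already realised on $\Q_n$, so this is harmless). Theorem~\ref{thm:introvirtual} then already gives the bijection $I\colon\Irr(\bar KGb)\to\Irr(\bar KHc)$ and, by part~(b), $I$ commutes with $\Aut(\bar K/K)=\Aut(K'/K)$, which via restriction to $\Q_n$ covers all of $\Gal(\Q_n/\Q)$. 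So the purely $p$-adic part of $\CH_{n,k}$ --- the automorphisms $\alpha$ with $\alpha(\delta)=\delta^{p^u}$ on $n_{p'}$-roots of unity --- is handled directly by Theorem~\ref{thm:introvirtual}(b), provided one first checks that every element of $\CH_n$ lifts to an element of $\Aut(\bar K/K)$; this is exactly the content of the remark after the definition of $\CH_n$ (the Weil-group description, Lemmas~\ref{lem:stablering} and \ref{lem:cyclotomic}), which we may invoke.

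The substantive point is the compatibility with the factor $\tau\in\Gal(k'/k)$ hidden in the definition of $\CH_{n,k}$, and this is where the hypothesis that $\CO$ is absolutely unramified enters. The idea is that because $J(\CO)=p\CO$, the ring $\CO$ behaves like the Witt vectors of $k$: an automorphism $\tau$ of $k'$ of the stated form lifts canonically to an automorphism $\tilde\tau$ of the corresponding unramified extension $\CO'$ of $\CO$ (with residue field $k'$), and this lift is compatible with the Frobenius-twist description of $\alpha$ on roots of unity because the Teichm\"uller lifts of the $n_{p'}$-th roots of unity in $k'$ are precisely the $n_{p'}$-th roots of unity in $K'$. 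Thus an element of $\CH_{n,k}$, given by a pair $(\alpha,\tau)$ sharing the exponent $p^u$, lifts to a \emph{single} ring automorphism of $\CO'$ (hence of $\CO'Gb$ and $\CO'Hc$) reducing to $\tau$ on $k'$ and restricting to $\alpha$ on $\Q_n$. One then argues that the $\CO$-algebra isomorphism $Z(\CO Gb)\cong Z(\CO Hc)$ from Theorem~\ref{thm:introvirtual}(a) --- which, crucially, is an isomorphism of the \emph{integral} centres, not merely the characteristic-zero ones --- is equivariant for this lifted automorphism, simply because it was constructed from $K'M\cdot_{K'A}-$ and this construction is manifestly natural in the coefficient ring and hence commutes with any ring automorphism of $\CO'$ fixing $\CO$. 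Reducing mod $p$, the induced isomorphism $Z(kGb)\cong Z(kHc)$ is $\tau$-equivariant.

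Having the isomorphism of centres be equivariant under the lift of $(\alpha,\tau)$ simultaneously in characteristic $0$ and characteristic $p$, the final step is to translate this back into a statement about the character bijection $I$. The link is the classical fact that a character $\chi\in\Irr(\bar KGb)$ is determined by the central character $\omega_\chi\colon Z(\bar KGb)\to\bar K$, equivalently by the corresponding central primitive idempotent $e_\chi$, and that two automorphisms agree on $\chi$ iff they agree on $e_\chi$; the decomposition map and the compatibility of central characters with reduction mod $p$ then force the $\Gal(k'/k)$-part of the action to be controlled by the characteristic-$p$ reduction of the central isomorphism. Since the central isomorphism matches $e_\chi$ with $e_{I(\chi)}$ and is equivariant for the lifted $(\alpha,\tau)$, we get $e_{{}^\sigma I(\chi)}=e_{I({}^\sigma\chi)}$ for every $\sigma\in\CH_{n,k}$, i.e. $^\sigma I(\chi)=I({}^\sigma\chi)$, as desired.

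The main obstacle I anticipate is the bookkeeping in the second paragraph: making precise that the central isomorphism of Theorem~\ref{thm:introvirtual}(a) is genuinely natural in the coefficient ring (so that it commutes with the lifted automorphism of $\CO'$), and correctly matching up the three a priori independent pieces of data --- the exponent $p^u$, the Teichm\"uller lifts of roots of unity, and the Frobenius on $k'$ --- so that a pair $(\alpha,\tau)\in\CH_{n,k}$ really does lift to one ring automorphism rather than merely to a compatible pair. The absolute unramifiedness of $\CO$ is what makes this work, via the functoriality of Witt vectors / the uniqueness of unramified lifts, and the argument will break without it; isolating exactly this in a lemma about $\CH_{n,k}$-equivariance of unramified base change is the technical heart of the proof.
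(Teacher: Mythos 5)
Your overall strategy — choose a suitable extension $(K',\CO',k')$ of $(K,\CO,k)$ and deduce the result from Theorem~\ref{thm:introvirtual} — is the same as the paper's, but your execution contains one genuine error and two significant detours that suggest a misreading of the structure of the statement.

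\textbf{The error.} You assert that $\Aut(\bar K/K)$ ``via restriction to $\Q_n$ covers all of $\Gal(\Q_n/\Q)$.'' This is false, and in fact the whole point of the theorem is that it is false: the image of $\Gal(K'/K)$ (equivalently, of $\Aut(\bar K/K)$) in $\Gal(\Q_n/\Q)$ is \emph{precisely} $\CH_{n,k}$, and this is the entire content of Lemma~\ref{lem:cyclotomic}(b). The reason the image is constrained is that the $n_{p'}$-roots of unity in $K'$ reduce bijectively to those in $k'$, so the induced action on $n_{p'}$-roots of unity in $\Q_n$ is forced to be a power of the Frobenius compatible with $\Gal(k'/k)$. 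If your claim were true, the theorem would hold for all of $\Gal(\Q_n/\Q)$ and $\CH_{n,k}$ would never appear; the fact that it does not is the substance of what you are being asked to prove.

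\textbf{The unnecessary detours.} Your second and third paragraphs split $\CH_{n,k}$ into a ``$\Q_n$-part'' and a ``$k'$-part'' and try to handle each separately by tracking the integral central isomorphism of Theorem~\ref{thm:introvirtual}(a) through mod-$p$ reduction and arguing $\tau$-equivariance. This misreads the definition of $\CH_{n,k}$: the $\tau\in\Gal(k'/k)$ does not act on $\Irr(\bar K Gb)$ at all, since characters take values in $\Q_n$, and the statement to be proved only concerns the action of $\sigma|_{\Q_n}$. The condition involving $\tau$ is a \emph{constraint} determining which elements of $\Gal(\Q_n/\Q)$ lie in $\CH_{n,k}$, not an additional piece of equivariance to verify. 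Once one knows (via Lemma~\ref{lem:cyclotomic}(b)) that every element of $\CH_{n,k}$ arises by restriction from an element $\beta\in\Gal(K'/K)$, Theorem~\ref{thm:introvirtual}(b) gives $I({}^{\beta}\chi)={}^{\beta}I(\chi)$ immediately, and there is nothing further to do; you do not need to re-examine the central isomorphism, reduce mod $p$, or invoke central characters. The paper's own proof (buried inside Theorem~\ref{thm:rational}) is exactly this: choose $K'/K$ normal with $k'$ perfect and $\Q_n\subseteq K'$, apply Lemma~\ref{lem:cyclotomic}(b) to identify $\CH_{n,k}$ with the image of $\Gal(K'/K)$ in $\CG_n$, and invoke Theorem~\ref{thm:introvirtual}(b).

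Your instinct that the heart of the matter is the lifting of pairs $(\alpha,\tau)$ to a single automorphism of $\CO'$ is correct — that is precisely what the proof of Lemma~\ref{lem:cyclotomic}(b) does, using the Teichm\"uller lifts and the uniqueness of unramified lifts — but after that lemma the argument should stop at Theorem~\ref{thm:introvirtual}(b) rather than duplicating parts of its proof.
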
 

As mentioned before, the bijection $I$ in the above theorem is part of 
a perfect isometry.  Further, Morita, Rickard and $p$-permutation 
equivalences all yield virtual Morita equivalences. Thus the conclusion 
of the Theorem holds on replacing the hypothesis of virtual Morita 
equivalence by any of these equivalences - in the case of a Morita 
equivalence the induced  bijection between the sets of irreducible 
Brauer characters also  commutes  with the action of $\CH_{n,k} $ as 
well as with the decomposition map (see Theorem ~\ref {thm:rational}).

Recall that a character  $\chi \in$ $\Irr(\bar KG)$ is said to be 
{\it $p$-rational} if  there  there exists a  root  of unity $\delta$ 
in $\bar K$  of order prime to $p$ such that $ \chi(g) \in$ $\Q[\delta]$ 
for all $g\in G$.   Theorem~\ref{thm:intro-basic} has the following 
consequence.

\begin{cor} \label{cor:intro-rational}   
Suppose that $\CO$ and $\bar K$ are as in Theorem \ref{thm:intro-basic}.    
Any virtual Morita equivalence between block algebras $\CO Gb$ and 
$\CO Hc$  given by a virtual bimodule and its dual induces a  bijection 
between $\Irr(\bar KGb) $ and $\Irr(\bar KHc)$ which preserves 
$p$-rationality.
\end{cor}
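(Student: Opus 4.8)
The plan is to deduce this directly from Theorem~\ref{thm:intro-basic} by characterising $p$-rationality in terms of invariance under a suitable subgroup of $\CH_{n,k}$. First I would fix a positive integer $n$ that is large enough for both $G$ and $H$, so that the actions of $\Aut(\bar K)$ on $\Irr(\bar KGb)$ and on $\Irr(\bar KHc)$ factor through $\Gal(\Q_n/\Q)$. Let $I$ be the bijection from Theorem~\ref{thm:intro-basic}; it commutes with the action of $\CH_{n,k}$. The key observation is that a character $\chi\in\Irr(\bar KG)$ with values in $\Q_n$ is $p$-rational if and only if $\chi$ is fixed by the subgroup $\CH_n^{0}$ of $\CH_n$ consisting of those $\alpha\in\CH_n$ which restrict to the identity on $\Q_{n_{p'}}$ (equivalently, those $\alpha$ for which the integer $u$ in the definition of $\CH_n$ can be taken to be $0$ on $n_{p'}$-roots of unity, while $\alpha$ is allowed to move $p$-power roots of unity arbitrarily). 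Indeed, $\Gal(\Q_n/\Q_{n_{p'}})$ is precisely the inertia-type subgroup moving only the $p$-part of the roots of unity, and $\chi$ is $p$-rational exactly when its values lie in $\Q_{n_{p'}}=\Q[\delta]$ for $\delta$ a primitive $n_{p'}$-th root of unity, i.e. when $\chi$ is $\Gal(\Q_n/\Q_{n_{p'}})$-fixed.

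Next I would check that $\CH_n^{0}$, as just described, is contained in $\CH_{n,k}$: taking $u=0$ and $\tau=\id_{k'}$ in the defining condition of $\CH_{n,k}$ shows that every element of $\Gal(\Q_n/\Q_{n_{p'}})\cap\CH_n$ lies in $\CH_{n,k}$, and in fact $\Gal(\Q_n/\Q_{n_{p'}})\subseteq\CH_n$ since such automorphisms fix all $n_{p'}$-roots of unity (the condition $\alpha(\delta)=\delta^{p^u}$ holds with $u=0$). Hence $\CH_n^{0}\subseteq\CH_{n,k}$, and so the bijection $I$ commutes with the action of $\CH_n^{0}$ by Theorem~\ref{thm:intro-basic}.

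Now the corollary follows: if $\chi\in\Irr(\bar KGb)$ is $p$-rational, then (after replacing $n$ by a multiple if necessary so that $\chi$ takes values in $\Q_n$) $\chi$ is fixed by $\CH_n^{0}$; since $I$ is $\CH_n^{0}$-equivariant, $I(\chi)$ is also fixed by $\CH_n^{0}$, hence $p$-rational. The same argument applied to $I^{-1}$ (which is the bijection induced by the virtual Morita equivalence given by $M^\vee$ and its dual, or simply the inverse map, which inherits the equivariance) shows that $I$ restricts to a bijection between the $p$-rational characters of the two blocks. The one point that requires a little care — the main obstacle, such as it is — is the bookkeeping around the choice of $n$: $p$-rationality is an absolute notion, so one must ensure that the subgroup $\CH_n^{0}$ genuinely detects it independently of how large $n$ is chosen, which amounts to the compatibility of the groups $\CH_n$ under the natural restriction maps $\Gal(\Q_{m}/\Q)\to\Gal(\Q_n/\Q)$ for $n\mid m$; this is routine but should be stated explicitly.
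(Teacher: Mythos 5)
Your proof is correct and follows essentially the same route as the paper: both characterise $p$-rationality as invariance under $\Gal(\Q_n/\Q_{n_{p'}})$, note that this subgroup lies in $\CH_{n,k}$ (with $u=0$, $\tau=\id$), and then invoke the $\CH_{n,k}$-equivariance of the bijection $I$. The only superficial difference is that the paper simply fixes $n$ to be a common multiple of $|G|$ and $|H|$ at the outset, which already guarantees all character values lie in $\Q_n$ and makes your closing caveat about compatibility under restriction maps unnecessary.
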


Recall that for a perfect subfield  $k'$  of $k$, there is a unique    
absolutely unramified complete discrete valuation ring $W(k')$ 
contained in $\CO$ such that the image of $W(k')$ under the canonical  
surjection $\CO \to k$ is $k'$  (see \cite[Chapter 2, Theorems 3, 4  
and Prop.~10]{SeLF}). The ring $W(k')$ is called the ring of Witt  
vectors  in $\CO$ of $k'$.  

\begin{defi}  
Let $(K, \CO, k)$ be a $p$-modular system. 
Let  $G$ be a finite group and  $b$ a  block of $\CO G$. The {\it 
minimal complete discrete valuation ring of $b$ in $\CO$} denoted 
$\CO_b$ is the ring of Witt vectors in $\CO$ of the finite subfield  
of $k$ generated by the coefficients of the group elements in the 
image of $b$ under the  canonical surjection $\CO G \to kG$. If 
$\CO_b= \CO$, then we say that $\CO$ {\it is a minimal complete 
discrete valuation  ring  of}  $b$.
\end{defi}

By idempotent lifting arguments we have  $b \in \CO_b G $, and if $R$ is 
any complete discrete valuation ring which is properly contained in 
$\CO_b$ and with $J (R) \subseteq J(\CO_b)$, then $b \notin RG$.

The following is a corollary of the special case of 
Theorem~\ref{thm:intro-basic}  in which $\CO$ is a minimal complete 
discrete  valuation ring of the  blocks involved.    

\begin{cor}   \label{cor:intro-virtual}  
Suppose that $\CO$ and $\bar K$ are as in Theorem \ref{thm:intro-basic}.    
Let $G$ and $H$ be finite groups and let $n$ be large enough for $G$ 
and for $H$. Let $b$ be a block of $\CO G$ and $c$ a block of $\CO H$.
Suppose that $\CO$ is a minimal complete discrete valuation ring for both 
$b$ and $c$.  A virtual Morita equivalence between  $\CO Gb $ and 
$\CO  Hc $ given by a virtual bimodule and its dual induces
a bijection $I : \Irr(\bar KGb) \to \Irr(\bar KHc) $ such that for   
any $\chi \in \Irr (\bar KGb) $, and  any $\sigma\in\CH_n$, we have 
 $\,^\sigma \chi = \chi$ if and only if $\,^{\sigma}I(\chi) =I(\chi)$. 
 \end{cor}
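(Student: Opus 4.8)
The plan is to obtain the corollary from Theorem~\ref{thm:intro-basic}. Let $I : \Irr(\bar KGb) \to \Irr(\bar KHc)$ be the bijection supplied by that theorem; it commutes with the action of $\CH_{n,k}$. It therefore suffices to prove the following claim: for $\sigma \in \CH_n$ and $\chi \in \Irr(\bar KGb)$, if $\,^{\sigma}\chi = \chi$ then $\sigma \in \CH_{n,k}$; and, by the same argument applied to $H$ and $c$, if $\,^{\sigma}I(\chi) = I(\chi)$ then $\sigma \in \CH_{n,k}$ -- here we use the minimality of $\CO$ for $b$, respectively for $c$. Granting the claim, $\,^{\sigma}\chi = \chi$ forces $\sigma \in \CH_{n,k}$ and hence $\,^{\sigma}I(\chi) = I(\,^{\sigma}\chi) = I(\chi)$; conversely $\,^{\sigma}I(\chi) = I(\chi)$ forces $\sigma \in \CH_{n,k}$, hence $I(\,^{\sigma}\chi) = \,^{\sigma}I(\chi) = I(\chi)$ and so $\,^{\sigma}\chi = \chi$ by injectivity of $I$. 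By symmetry it is enough to prove the first half of the claim.

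Let then $\sigma \in \CH_n$ with $\,^{\sigma}\chi = \chi$ for some $\chi \in \Irr(\bar KGb)$. As $\CO$ is absolutely unramified and $k$ is perfect, $\CO = W(k)$; as $\CO$ is minimal for $b$, the field $k$ is finite and is generated over $\F_p$ by the coefficients of the image $\bar b$ of $b$ in $kG$. By the description of $\CH_n$ as the restriction to $\Q_n$ of the Weil group of $\Q_p$ (Lemma~\ref{lem:stablering}, Lemma~\ref{lem:cyclotomic}), there exist a non-negative integer $u$ and an automorphism $\hat\sigma \in \Aut(\bar K)$ with $\hat\sigma|_{\Q_n} = \sigma$ such that $\hat\sigma$ stabilises $\CO$, such that $\sigma(\delta) = \delta^{p^u}$ for all $n_{p'}$-roots of unity $\delta$ in $\Q_n$, and such that the automorphism of the residue field of the valuation ring of $\bar K$ induced by $\hat\sigma$ is the Frobenius power $x \mapsto x^{p^u}$. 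In particular $\hat\sigma$ stabilises $k$ and the splitting field $k'$ of $x^{n_{p'}}-1$ over $k$, inducing $x \mapsto x^{p^u}$ on each; write $\bar\sigma \in \Gal(k/\F_p)$ for the induced automorphism of $k$.

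Let $\hat\sigma$ act on $\bar KG$ coefficientwise; since $\hat\sigma(\CO) = \CO$ this restricts to a ring automorphism of $\CO G$. Because $n$ is large enough for $G$, for each $\psi \in \Irr(\bar KGb)$ the primitive central idempotent $e_{\psi} = \tfrac{\psi(1)}{|G|}\sum_{g \in G}\psi(g^{-1})g$ of $\bar KG$ has all its coefficients in $\Q_n$, whence $\hat\sigma(e_{\psi}) = e_{\,^{\sigma}\psi}$. Summing over $\psi \in \Irr(\bar KGb)$ gives $\hat\sigma(b) = \sum_{\psi} e_{\,^{\sigma}\psi}$, which is a block of $\CO G$ -- being the image of the block $b$ under a ring automorphism of $\CO G$ -- with $\Irr(\bar KG\,\hat\sigma(b)) = \{\,^{\sigma}\psi : \psi \in \Irr(\bar KGb)\}$. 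Since $\,^{\sigma}\chi = \chi$, the character $\chi$ lies in $\Irr(\bar KGb) \cap \Irr(\bar KG\,\hat\sigma(b))$; as the blocks of $\CO G$ partition $\Irr(\bar KG)$, this forces $\hat\sigma(b) = b$. Reducing modulo $J(\CO)$ gives $\bar\sigma(\bar b) = \bar b$, so $\bar\sigma$ fixes each coefficient of $\bar b$; these generate $k$, so $\bar\sigma = \id_k$. Hence the automorphism $\tau$ of $k'$ induced by $\hat\sigma$ fixes $k$ pointwise, i.e. $\tau \in \Gal(k'/k)$, and $\tau(\eta) = \eta^{p^u}$ for all $n_{p'}$-roots of unity $\eta$ in $k'$. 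Together with $\sigma(\delta) = \delta^{p^u}$ on $n_{p'}$-roots of unity in $\Q_n$, the pair $(u,\tau)$ witnesses $\sigma \in \CH_{n,k}$, proving the claim.

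The step needing the most care is the choice of the lift $\hat\sigma$: one needs a single non-negative integer $u$ simultaneously governing the action of $\sigma$ on the prime-to-$p$ roots of unity in $\Q_n$ and the Frobenius twist on the residue field of $\bar K$, and this is exactly what the identification of $\CH_n$ with the restriction of the Weil group of $\Q_p$ provides. After that the argument is short: $\,^{\sigma}\chi = \chi$ is transported through the idempotent $b$ to the statement that $\hat\sigma$ fixes the block $b$, and the minimality of $\CO$ for $b$ forces the residue-field component $\bar\sigma$ to be trivial, which is precisely the obstruction to membership in $\CH_{n,k}$. (Minimality is essential: for a general $p$-modular system $\sigma$ may fix $\chi$ while $\bar\sigma \neq \id_k$, so that $\sigma \notin \CH_{n,k}$.)
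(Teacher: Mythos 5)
Your proof is correct and follows essentially the same route as the paper's, which reduces the corollary to the claim that $\CH_{n,k}$ is precisely the $\CH_n$-stabiliser of $b$ (and of $c$) and then invokes Theorem~\ref{thm:rational}; your lift-and-reduce argument (lift $\sigma$ to a ring automorphism $\hat\sigma$ of $\CO G$, show $\hat\sigma(b)=b$ via fixed idempotents, reduce mod $J(\CO)$, and use that the coefficients of $\bar b$ generate $k$) is exactly the content of the paper's terse ``It follows.'' The one place to take care, which you correctly flag, is producing a lift $\hat\sigma$ that simultaneously stabilises $\CO$ and induces $x\mapsto x^{p^u}$ on the residue field for the \emph{same} $u$ witnessing $\sigma\in\CH_n$; this uses the Weil-group description of $\CH_n$, which the paper states as a remark but does not fully prove via the cited Lemmas~\ref{lem:stablering} and \ref{lem:cyclotomic} (the latter directly gives the analogue for $\CH_{n,k}$, and the $\CH_n$ case follows by taking $k=\F_p$ there and then tracking residue-field actions to adjust the lift by an element of the absolute inertia group).
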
 

For a $p$-subgroup $P$ of $G$  the Brauer homomorphism 
$\Br_P:  (\CO G)^P  \to kC_G(P)$ is the map  which sends an element 
$\sum_{g\in G} \alpha_g g$ of $(\CO G )^P$ to 
$\sum_{g\in C_G(P)} \bar \alpha_g g $, where $\bar \alpha $ denotes 
reduction modulo the maximal ideal $J(\CO )$ of $\CO $. Recall that 
$\Br_P$ is a  surjective $\CO$-algebra  homomorphism and that 
$\Br_P  (Z(\CO G)) \subseteq Z(kC_G(P))$. In particular, if $b$ is a 
central idempotent of $\CO G$, then either $\Br_P(b) =0 $ or $\Br_P(b)$  
is a central idempotent of $kC_G(P)$. If $b$ is a block of $\CO G$,   
then a  defect group of $b$  is defined to be a maximal $p$-subgroup  
$P$ of $G$ such that  $\Br_P (b) \ne 0 $.  By Brauer's first main 
theorem, if $b$ is a block of $\CO G$ with  defect group $P$, then 
there is a unique block $c$ of $\CO N_G(P)$ with defect group $P$ such 
that $\Br_P(b) =$ $\Br_P(c)$  and the map $b \mapsto c$ is  a  
bijection between  the set of blocks  of $\CO G $  with defect group 
$P$ and the set of blocks of $\CO N_G(P)$  with defect group $P$,  
and this bijection is known as the  Brauer correspondence.  

In \cite[Conjecture B]{Nav04}, Navarro conjectured that if $|G|=n$, 
$b$ and $c$ are blocks in correspondence as above and  $K$ 
contains $\Q_n$,  then for each $\sigma \in \CH_n $ the number of 
height zero characters in $\Irr(\bar KGb)$ fixed by $\sigma $ equals 
the   number of height zero characters in $\Irr(\bar K H c)$ fixed by 
$\sigma $. Since $\CO_b =\CO_c $, and since the bijection  $I$  of  
Corollary~\ref{cor:intro-virtual} is part of a perfect isometry and 
hence preserves heights,  it follows that a virtual  Morita equivalence 
between   $\CO_b Gb $ and $\CO_b H  c $ given by a virtual bimodule and
its dual implies  Navarro's  conjecture.  
 
In view of the above discussion, it would  be  desirable to  explore 
the following question: Given a categorical equivalence, say a Morita 
equivalence  or Rickard equivalence between $\CO' Gb $ and $\CO' Hc$    
for some complete discrete valuation ring $\CO'$, for $G$ and $H$ finite 
groups, $b$ and $c$ blocks  of $\CO' G $ and $\CO' H$ respectively, and  
a complete discrete valuation ring  $\CO$  contained in $\CO' $ such 
that $b $ (respectively $c$)  belongs to $\CO Gb$  (respectively 
$\CO Hc $), is the equivalence between $\CO' Gb$ and $\CO' Hc$ an 
extension of an equivalence  between  $\CO b $ and $\CO Hc $? We give a 
positive answer to this question in the case of blocks with cyclic 
defect groups.

Let $G$  be a finite group and $b$ a block of $\CO G$ with a 
nontrivial cyclic defect group $P$. If $k$ is a splitting field for all 
subgroups of $G$, then  in  \cite{Rouqcyclic} Rouquier constructed a 
$2$-sided splendid tilting complex of $(\CO Gb, \CO N_G(P)e)$-bimodules, 
where $e$ is the Brauer correspondent of $b$. (The hypotheses in 
\cite{Rouqcyclic} also require the field of fractions  $K$ to be large 
enough, but it is easy to see that Rouquier's construction works with 
$\CO$ absolutely unramified). We show  that Rouquier's construction  
descends to any $p$-modular system which contains the block 
coefficients.

\begin{thm} \label{cyclic-splendid-intro}   
Let $(K',\CO',k')$ be a $p$-modular system such that $\CO\subseteq$ 
$\CO'$ and such that $J(\CO)\subseteq$ $J(\CO')$. Let $G$ be a finite 
group and $b$ a block of $\CO' G$ having a nontrivial cyclic defect 
group $P$. Suppose that $b\in$ $\OG$  and that $k'$ is a splitting field 
for all subgroups of $G$. Let $e$ be the block of $\CO' N_G(P)$ with $P$ 
as a defect group corresponding to $b$ via the Brauer correspondence. 
Then $e\in$ $\CO N_G(P)$ and the blocks $\OGb$ and $\CO N_G(P)e$ are 
splendidly Rickard equivalent. More precisely, there is a splendid 
Rickard complex $X$ of $(\OGb, \CO N_G(P)e)$-bimodules such that 
$\CO' \tenO X$ is isomorphic to Rouquier's complex $X'$.
\end{thm}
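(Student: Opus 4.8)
The plan is to analyse Rouquier's construction term by term and check that each building block is already defined over $\CO$, then use a descent/uniqueness argument to see that the bimodule complex itself, and not merely its terms, descends. Recall that Rouquier's complex $X'$ for a block with cyclic defect group $P$ is assembled from the Green correspondents of the trivial source modules appearing in the stable equivalence of Green, together with the structure of the Brauer tree; concretely, the nonzero terms of $X'$ are (sums of shifts of) indecomposable $p$-permutation $(\CO' Gb,\CO' N_G(P)e)$-bimodules with vertices contained in the diagonal $\Delta P$, and the differentials are prescribed up to scalars by the fact that $X'$ is a two-sided tilting complex whose terms are fixed. First I would record that since $b\in\OG$, the Brauer correspondent $e$ lies in $\CO N_G(P)$: this follows from the compatibility of $\Br_P$ with the inclusion $\CO\subseteq\CO'$ together with Brauer's first main theorem, and the fact that idempotent lifting identifies blocks of $\CO N_G(P)$ with blocks of $\CO' N_G(P)$ having the same Brauer correspondent relation. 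Next, for each indecomposable $p$-permutation $(\CO' Gb,\CO' N_G(P)e)$-bimodule $Y'$ occurring as a summand of a term of $X'$, I would produce an indecomposable $p$-permutation $(\OGb,\CO N_G(P)e)$-bimodule $Y$ with $\CO'\tenO Y\cong Y'$, using the fact that trivial source modules (more generally $p$-permutation modules) are liftable and descendable along extensions of complete discrete valuation rings with $J(\CO)\subseteq J(\CO')$: such modules are determined by their Brauer constructions over $k$, which do not change under the field extension $k\subseteq k'$ for the relevant local subgroups because $k$ is already large enough for the cyclic group $P$ and its local structure. This gives a graded $(\OGb,\CO N_G(P)e)$-bimodule $X$ with $\CO'\tenO X\cong X'$ as graded bimodules.

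The second half is to equip $X$ with differentials making it a complex with $\CO'\tenO X\cong X'$ as complexes. For this I would use that $\Hom$ of $p$-permutation bimodules commutes with $\CO'\tenO-$ in the appropriate sense: since each $X_i$ and $X_{i+1}$ is finitely generated and $\CO$-free, $\Hom_{\OGb\otimes(\CO N_G(P)e)^{\op}}(X_i,X_{i+1})$ is an $\CO$-lattice whose extension of scalars to $\CO'$ is the corresponding $\Hom$ space for $X'$. Rouquier's differentials $d'_i$ are, up to units in $\CO'$, the unique (up to the relevant automorphisms) bimodule homomorphisms realising the Brauer-tree combinatorics; in particular each $d'_i$ can be chosen to lie in the image of the $\CO$-form, i.e.\ there are $d_i\in\Hom_{\OGb\otimes(\CO N_G(P)e)^{\op}}(X_i,X_{i+1})$ with $d_{i+1}d_i=0$ and $\CO'\otimes d_i = d'_i$ after adjusting by units. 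Here the key point is that the obstruction to $d_{i+1}d_i=0$ lives in an $\CO$-lattice inside a space where the vanishing already holds over $\CO'$, hence holds over $\CO$ since $\CO\to\CO'$ is injective on the relevant torsion-free modules. This produces the complex $X$ of $(\OGb,\CO N_G(P)e)$-bimodules with $\CO'\tenO X\cong X'$.

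Finally I would check that $X$ is itself a splendid Rickard complex, not just something that becomes one after extension. Splendidness (the terms being $p$-permutation bimodules with diagonal vertices and projective as one-sided modules) is built into the construction of $X$. For the Rickard property, I would use that $X\otimes_{\CO N_G(P)e}X^\vee$ is homotopy equivalent to $\OGb$ iff this holds after applying the faithfully-flat-like functor $\CO'\tenO-$: the one-sided modules involved are $\CO$-free of finite rank, so a chain map between such complexes is a homotopy equivalence iff its extension to $\CO'$ is, because being invertible in the homotopy category can be detected on mapping cones, whose reduced homology is detected after the flat base change $\CO\hookrightarrow\CO'$ (and then $k\hookrightarrow k'$ for the $k$-reductions). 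Since $\CO'\tenO X\cong X'$ is Rouquier's complex, which is a splendid Rickard complex, $X$ is too, and $\OGb$ and $\CO N_G(P)e$ are splendidly Rickard equivalent.

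\textbf{Main obstacle.} The delicate point is the middle step: showing the \emph{differentials} descend, i.e.\ that one can choose the $d_i$ over $\CO$ compatibly so that $X$ is a genuine complex mapping onto $X'$. Descent of the individual terms is routine $p$-permutation module theory, but the differentials in Rouquier's complex are only pinned down up to scalars and the compatibility conditions among them (the zig-zag relations around the Brauer tree); I expect the real work to be in unwinding Rouquier's explicit construction far enough to see that these scalars and compatibilities can all be arranged inside the $\CO$-forms, rather than merely over $\CO'$. An alternative to grinding through this is to invoke a general descent principle for Rickard complexes with $p$-permutation terms along extensions of complete discrete valuation rings with $J(\CO)\subseteq J(\CO')$ and $k\subseteq k'$ a purely inseparable-free extension not enlarging the relevant Brauer quotients — but making such a principle precise and applicable is essentially the same difficulty repackaged.
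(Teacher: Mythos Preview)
Your plan has a genuine gap, and it is not where you think it is. You identify the differentials as the main obstacle and treat descent of the individual terms as ``routine $p$-permutation module theory''; in fact it is the other way around.

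Your claim that for each indecomposable $p$-permutation summand $Y'$ of a term of $X'$ there exists an indecomposable $Y$ over $\CO$ with $\CO'\tenO Y\cong Y'$ is false in general. Over $k'$ (a splitting field), the projective term $N'$ in Rouquier's two-term complex $N'\to M'$ decomposes as $\bigoplus_{i\in I_1} A'\rho(i)\tenk \delta(i)B'$, where $I_1$ is a subset of the edge set of the Brauer tree. The individual summands are indexed by simple $A'$- and $B'$-modules, and these do \emph{not} descend to $k$ unless they are stable under $\Gamma=\Gal(k'/k)$; typically they are not, since the simple modules over $k$ break up into Galois orbits of simples over $k'$. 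Your justification (``$k$ is already large enough for the cyclic group $P$ and its local structure'') is not correct: the local structure involves the inertial quotient $E$, a $p'$-group whose simple modules genuinely split when passing from $k$ to $k'$.

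What the paper actually does is show that the \emph{whole} module $N'$ is $\Gamma$-stable, not its indecomposable pieces. This requires knowing that the $\Gamma$-action on the Brauer tree stabilises the subset $I_1\subseteq I$, which in turn rests on the combinatorial fact that any ring automorphism of $kGb$ induces a tree automorphism fixing a vertex (the exceptional vertex if there is one; otherwise a parity argument). Once $N'$ is $\Gamma$-stable, descent of projective modules over finite fields gives $N$ with $k'\tenk N\cong N'$. The differential then descends essentially for free, because $N'$ is a summand of a projective (or relatively projective, in the $O_p(G)\neq 1$ step) cover of $M'$, and projective covers commute with the base change. So the ``delicate middle step'' you anticipate is in fact the easy part; the hard part is the term descent you dismissed, and it needs the Brauer-tree Galois argument you do not mention.
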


Since a Rickard equivalence induces a virtual Morita equivalence, by the 
above discussion around Navarro's conjecture, we recover the following
result of Navarro from Theorem~\ref{cyclic-splendid-intro}.
 
\begin{cor}[{\cite[Theorem 3.4]{Nav04}}] 
Conjecture B of \cite{Nav04} holds for blocks with cyclic defect groups.
 \end{cor}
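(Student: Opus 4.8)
The plan is to reduce the statement to an application of Theorem~\ref{thm:intro-basic} (in the minimal-ring form recorded as Corollary~\ref{cor:intro-virtual}), feeding it the splendid Rickard equivalence produced by Theorem~\ref{cyclic-splendid-intro}. First I would fix a finite group $G$, a block $b$ of $\CO G$ with nontrivial cyclic defect group $P$, and the Brauer correspondent $e$ of $b$ in $\CO N_G(P)$. By replacing $\CO$ with $\CO_b$ I may assume that $\CO$ is a minimal complete discrete valuation ring for $b$; since defect groups and Brauer correspondence are insensitive to the coefficient ring and since $\Br_P$ is defined over the Witt vectors of the prime field, one checks $\CO_b = \CO_e$, so $\CO$ is simultaneously minimal for $e$. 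Here I use the remark following the definition of $\CO_b$ that $b\in\CO_b G$, and the analogous fact for $e$. I also need $k$ (equivalently the residue field of $\CO_b$) to be a splitting field for all subgroups of $G$: this is harmless because one may first pass to a large enough splitting $p$-modular system and then descend, or simply note that the minimal ring $\CO_b$ already has residue field containing the relevant roots of unity once $K$ is taken large enough. This is the one bookkeeping point that needs a line of care, since Theorem~\ref{cyclic-splendid-intro} carries a splitting-field hypothesis.

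With these normalisations in place, Theorem~\ref{cyclic-splendid-intro} applied to the pair $(\CO,\CO)$ — or rather applied with $\CO'$ a large enough extension and $\CO = \CO_b$ the minimal ring — yields a splendid Rickard complex $X$ of $(\CO Gb,\CO N_G(P)e)$-bimodules. By Remark~\ref{rem:equiv2}, $[X]$ and $[X^\vee]$ induce a virtual Morita equivalence (indeed a $p$-permutation equivalence) between $\CO Gb$ and $\CO N_G(P)e$. Since $\CO$ is absolutely unramified (Witt vectors of a perfect, indeed finite, field) and minimal for both blocks, Corollary~\ref{cor:intro-virtual} applies: for $n$ large enough for $G$ and $N_G(P)$ — e.g.\ $n=|G|$ — there is a bijection $I:\Irr(\bar KGb)\to\Irr(\bar K N_G(P)e)$ such that for every $\sigma\in\CH_n$ and every $\chi$, we have $\,^\sigma\chi=\chi$ if and only if $\,^\sigma I(\chi)=I(\chi)$. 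In particular, for each fixed $\sigma\in\CH_n$, the bijection $I$ restricts to a bijection between the $\sigma$-fixed characters in $\Irr(\bar KGb)$ and the $\sigma$-fixed characters in $\Irr(\bar K N_G(P)e)$.

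Finally I would invoke, as recorded in Remark~\ref{rem:equiv1} together with the discussion preceding the Corollary, that the bijection $I$ is part of a perfect isometry, hence preserves the defect of characters and therefore sends height zero characters to height zero characters (defect zero relative to $P$ being detected by the $p$-part of $\chi(1)$, which a perfect isometry respects up to the global scaling by $|P|$). Consequently $I$ restricts to a bijection between the height zero $\sigma$-fixed characters of $b$ and those of $e$, so these two sets have equal cardinality for every $\sigma\in\CH_n$. This is exactly the assertion of \cite[Conjecture B]{Nav04} for the block $b$ with cyclic defect group, and since $b$ was an arbitrary such block the Corollary follows. I expect the only genuine obstacle to be the reconciliation of hypotheses: ensuring that the minimal ring $\CO_b=\CO_e$ can simultaneously be taken absolutely unramified (automatic, as a ring of Witt vectors) and to have residue field splitting all subgroups of $G$ (which is where one must either enlarge first and descend via Theorem~\ref{cyclic-splendid-intro}, or argue that $I$'s defining property is stable under further field extension so that the fixed-point count is unchanged). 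Everything else is a direct concatenation of the quoted results.
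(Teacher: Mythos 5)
Your proposal is correct and follows essentially the same route as the paper: Theorem~\ref{cyclic-splendid-intro} applied with $\CO=\CO_b$ and a large splitting extension $\CO'$ gives a splendid Rickard complex over the minimal (absolutely unramified) ring, hence by Remark~\ref{rem:equiv2} a virtual Morita equivalence, and then Corollary~\ref{cor:intro-virtual} together with the height preservation coming from the underlying perfect isometry yields the equality of $\sigma$-fixed height-zero character counts for every $\sigma\in\CH_n$. Your worry about the splitting-field hypothesis is resolved exactly as you suspect, since in Theorem~\ref{cyclic-splendid-intro} that hypothesis is placed on the residue field $k'$ of the large ring $\CO'$, not on the residue field of $\CO_b$.
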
 

General descent arguments from Theorem \ref{thm:further-descent}
in conjunction with Theorem \ref{cyclic-splendid-intro} yield a splendid
equivalence for cyclic blocks for arbitrary $p$-modular systems.

\begin{thm} \label{cyclic-splendid-general}   
Let $(K,\CO,k)$ be a $p$-modular system.
Let $G$ a finite group and $b$ a block of  $\CO G$ having a nontrivial 
cyclic defect group $P$. Let $e$ be the block of $\CO N_G(P)$ with $P$ 
as a defect group corresponding to $b$ via the Brauer correspondence. 
Then $\CO Gb $ and  $\CO N_G(P)e $ are splendidly Rickard equivalent.
In particular, $\OGb$ and $\CO N_G(P)e$ are $p$-permutation equivalent.
\end{thm}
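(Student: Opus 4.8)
The plan is to deduce Theorem~\ref{cyclic-splendid-general} by combining the ``split-coefficient'' case already established in Theorem~\ref{cyclic-splendid-intro} with a general descent mechanism (referenced as Theorem~\ref{thm:further-descent}), bridging the gap between an arbitrary $p$-modular system $(K,\CO,k)$ and one that is both absolutely unramified and large enough for the groups involved. First I would choose an auxiliary $p$-modular system $(K'',\CO'',k'')$ with $\CO\subseteq\CO''$, $J(\CO)\subseteq J(\CO'')$, such that $k''$ is a perfect splitting field for all subgroups of $G$ (hence for all subgroups of $N_G(P)$), and such that $\CO''$ contains $\CO_b$-coefficients; concretely one may take $\CO''$ to be a suitable finite extension of $\CO$ whose residue field is large, and then (since we only need the block coefficients to lie in a subring) replace $\CO''$ by $W(k'')$ enlarged appropriately. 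The point is that $b\in\CO G$ forces $b\in\CO'' G$, and by the Brauer correspondence statement recalled before the theorem, the correspondent block $e$ of $\CO N_G(P)$ likewise lies in $\CO'' N_G(P)$.

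Next I would apply Theorem~\ref{cyclic-splendid-intro} with the roles $(K,\CO,k)\rightsquigarrow$ (a minimal system containing the block coefficients) and $(K',\CO',k')\rightsquigarrow (K'',\CO'',k'')$: the hypotheses are met because $k''$ is a splitting field for all subgroups of $G$ and Rouquier's construction works over an absolutely unramified base. This yields a splendid Rickard complex $X_0$ of $(\CO_b Gb,\CO_b N_G(P)e)$-bimodules (or over whatever minimal absolutely unramified ring carries the coefficients) whose extension of scalars to $\CO''$ recovers Rouquier's complex. The remaining task is purely a descent/ascent bookkeeping: from a splendid Rickard complex defined over the minimal ring $\CO_b$, produce one over the given $\CO$. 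Here $\CO_b\subseteq\CO$ with $J(\CO_b)\subseteq J(\CO)$ by the remark following the definition of $\CO_b$, so extension of scalars $\CO\otimes_{\CO_b}-$ applied to $X_0$ gives a complex of $(\CO Gb,\CO N_G(P)e)$-bimodules; one checks that this remains a splendid Rickard complex — being a bounded complex of $p$-permutation bimodules with fusion-stable vertices, whose homology gives a Morita-type equivalence — since all of these properties (indecomposability of the relevant terms up to the usual projective summands, the endomorphism-ring conditions defining a Rickard complex, and the $p$-permutation/splendid conditions) are preserved under extension of scalars along $\CO_b\to\CO$ with $J(\CO_b)\subseteq J(\CO)$, exactly as in the descent results of the paper.

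The final sentence, that $\CO Gb$ and $\CO N_G(P)e$ are then $p$-permutation equivalent, is immediate from Remark~\ref{rem:equiv2}: a splendid Rickard complex $X$ yields the $p$-permutation equivalence given by $[X]=\sum_{i\in\Z}(-1)^i[X_i]$ and $[X^\vee]$. I expect the main obstacle to be the verification that the splendid Rickard complex property genuinely descends through the tower $\CO_b\subseteq\CO$ (and that the intermediate enlargement to $\CO''$ does not lose information) — specifically, that a complex which becomes a splendid Rickard complex after extending scalars to a splitting system already is one over the smaller ring. This is where Theorem~\ref{thm:further-descent} must be invoked in earnest: one needs that the conditions ``$X$ is a Rickard complex'' and ``$X$ is splendid'' are detected after faithfully flat (indeed free) extension of scalars along an unramified map of complete discrete valuation rings, using that $k\otimes_{\CO}X$ determines whether the bimodule components are $p$-permutation and that $\End$-computations commute with such base change. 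Once that descent principle is in hand, the theorem follows by assembling the pieces: $X := \CO\otimes_{\CO_b} X_0$ does the job, and Rouquier's complex is recovered over any splitting extension.
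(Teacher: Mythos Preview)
Your overall strategy---get a splendid Rickard complex over $\CO_b$ via Theorem~\ref{cyclic-splendid-intro}, then extend scalars to $\CO$---is the right shape, and the final extension step $\CO\otimes_{\CO_b}X_0$ is actually the \emph{easy} direction (the trivial implication in Proposition~\ref{extendProp}), not something requiring Theorem~\ref{thm:further-descent}. So your stated ``main obstacle'' is not an obstacle at all.

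The genuine gap is earlier. You propose to apply Theorem~\ref{cyclic-splendid-intro} with the smaller ring $\CO_b$ and a larger ring $\CO''$ whose residue field $k''$ is a splitting field for all subgroups of $G$. But Theorem~\ref{cyclic-splendid-intro} is stated for a block $b$ of the \emph{larger} ring $\CO'G$ that happens to lie in $\CO G$; it requires $b$ to remain a primitive central idempotent over $\CO''$. Over a splitting field $k''$, the idempotent $\bar b$ will typically decompose as a sum of several blocks $\bar b'$ of $k''G$, so $b$ is \emph{not} a block of $\CO''G$ and the hypotheses of Theorem~\ref{cyclic-splendid-intro} are not met. This is precisely why the paper's proof does not apply Theorem~\ref{cyclic-splendid-intro} to $b$ itself: it first reduces (by the trivial direction of Proposition~\ref{extendProp} and the fact that central idempotents lie in $W(k)G$) to the case $\CO=W(\Fp[\bar b])$, then picks a single constituent block $b'$ of $b$ over the intermediate ring $\CO'=W(k[\bar b'])$, applies Theorem~\ref{cyclic-splendid-intro} to $b'$ (which \emph{is} a block over the splitting extension), and finally invokes Theorem~\ref{thm:further-descent}. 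That theorem is not a ``detection after base change'' statement; it is a Galois-descent device: it sums the splendid Rickard complex for $b'$ over its Galois conjugates to produce one for the full block $b$ over $\CO$. Your proposal conflates this with Proposition~\ref{extendProp} and therefore misses the step that actually bridges from the single constituent $b'$ back to $b$.
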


The above results may be viewed as evidence for a refined version of the
Abelian defect group conjecture, namely that for any $p$-modular system
$(K, \CO, k)$ and any block $b$  of $\CO G$ with abelian defect group  
$P$ and Brauer correspondent $c$, there is a splendid Rickard 
equivalence between $\CO G b $ and   $\CO N_G(P)  c $.

If one is only interested in keeping track of $p$-rational characters, 
then by Corollary \ref{cor:intro-rational} it suffices to descend to   
any  absolutely unramified complete discrete valuation ring. Since    
$p$-permutation modules all have forms over absolutely unramified 
complete discrete valuation rings, any $p$-permutation equivalence 
between  block algebras of finite groups can be easily seen to be an 
extension of a $p$-permutation equivalence  between the corresponding   
blocks over the subring of Witt vectors.  We show that such descent is 
also possible for Morita equivalences induced by bimodules with
endopermutation sources.

\begin{thm}  \label{thm:basicunramified}
Let $(K, \CO, k)$ be a $p$-modular system. Let $G$ and $H$ be finite 
groups, $b$ a block of $\OG$ and $c$ a block of $\OH$. Denote by
$\bar b$ the image of $b$ in $kG$ and by $\bar c$ the image of $c$ in 
$kH$. Assume that $k$ is a splitting field for all subgroups of
$G\times H$.

\begin{enumerate} [(a)]  
\item 
For any Morita equivalence (resp. stable equivalence of Morita type)
between $kG\bar b$ and $kH\bar c$ given by an indecomposable  
bimodule $\bar M$ with endopermutation source $\bar V$ there 
is a Morita equivalence (resp. stable equivalence of Morita type) 
between $\OGb$ and $\OHc$ given by a bimodule $M$ with endopermutation 
source $V$ such that $k\tenO M\cong$ $\bar M$ and $k\tenO V\cong$ 
$\bar V$.

\item 
For any Morita equivalence (resp. stable equivalence of Morita type)
between $\OGb$ and $\OHc $ given by an indecomposable bimodule with 
endopermutation source $V$  there is a Morita equivalence
(resp. stable equivalence of Morita type) between $W(k)Gb$ and 
$W(k)Hc$ given by an indecomposable bimodule with endopermutation 
source $U$ such that  $k\ten_{W(k)} U\cong$ $k\tenO V$.
\end{enumerate} 
\end{thm}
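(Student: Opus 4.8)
The plan is to treat parts (a) and (b) in parallel, since both are descent statements for bimodules with endopermutation source, differing only in the pair of rings involved. The key structural input is the Puig/Scott theory of $p$-permutation bimodules and their sources: an indecomposable $(kG\bar b, kH\bar c)$-bimodule $\bar M$ inducing a Morita (or stable) equivalence has a vertex $\Delta$ which, viewed as a ``twisted diagonal'' subgroup of $G\times H$, projects isomorphically onto a defect group $P$ of $\bar b$ and onto a defect group $Q$ of $\bar c$; its source $\bar V$ is an endopermutation $k\Delta$-module. First I would recall that endopermutation modules for a $p$-group lift uniquely (up to isomorphism) from $k$ to any complete discrete valuation ring with residue field containing the relevant splitting data — this is the standard lifting theorem for endopermutation modules (their classification via the Dade group is ``ring-independent'' once $k$ is large enough), so there is an endopermutation $\CO\Delta$-module $V$ with $k\tenO V\cong\bar V$, and similarly a $W(k)\Delta$-module $U$ in part (b). The bimodule is then reconstructed as a direct summand of $\Ind_{\Delta}^{G\times H}(V)$ (respectively over $W(k)$), using that $p$-permutation modules over $\CO$ lift from $k$ and that the relevant endomorphism algebras are $\CO$-orders whose idempotents lift; concretely, $M$ is the unique (up to iso) indecomposable direct summand of $\Ind_\Delta^{G\times H} V$ with $k\tenO M\cong\bar M$, obtained by lifting the primitive idempotent of $\End_{kG\tenk kH^\op}(\Ind_\Delta^{G\times H}k\tenO V)$ corresponding to $\bar M$.

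The main point to verify is then that this lifted bimodule $M$ again induces a Morita (respectively stable) equivalence. For this I would compute $M\tenOH M^\vee$ inside $\CP(\OGb,\OGb)$: since $M$ and $M^\vee$ are $p$-permutation bimodules, their tensor product decomposes, by the Brauer-construction/Mackey machinery for $p$-permutation bimodules, into a part coming from the diagonal (giving $\OGb$ itself) plus summands induced from proper subgroups. The hypothesis that $\bar M$ induces a Morita equivalence says precisely that after applying $k\tenO-$ the non-diagonal summands vanish, i.e. $k\tenO(M\tenOH M^\vee)\cong kG\bar b$ as bimodules; but an isomorphism of $\CO$-free $p$-permutation bimodules can be detected after reduction mod $J(\CO)$ (a $p$-permutation bimodule over $\CO$ is determined by its reduction together with its lift being a form, and here both sides are explicit lifts of the same $k$-bimodule), so $M\tenOH M^\vee\cong\OGb$ and symmetrically $M^\vee\tenOG M\cong\OHc$. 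The stable-equivalence case is the same argument carried out modulo projective bimodules: one shows the extra summands of $M\tenOH M^\vee$ are projective by checking this after reduction mod $p$, projectivity of an $\CO$-free module being detected on its reduction. This also shows $M$ is indecomposable (its endomorphism algebra reduces to that of $\bar M$, which is local).

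For part (b) the only additional wrinkle is that one descends from $\CO$ to $W(k)$: here the defect/vertex group $\Delta$, the fusion data, and the image of $b$ and $c$ already live over $W(k)$ (the block idempotents have coefficients in $k$, hence lift to $W(k)$ by idempotent lifting, as noted in the excerpt's discussion of $\CO_b$), and the endopermutation $W(k)\Delta$-module $U$ with $k\ten_{W(k)}U\cong k\tenO V$ exists by the same lifting theorem applied over $W(k)$ (using that $k$ is a splitting field for all subgroups of $G\times H$, in particular $k$ contains all the roots of unity needed for the Dade group of $\Delta$ to be computed over $W(k)$). One then forms $U$-sourced summand of $\Ind_\Delta^{G\times H}U$ over $W(k)Gb\tenk (W(k)Hc)^\op$ and runs the identical tensor-product computation.

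The step I expect to be the genuine obstacle is the detection-after-reduction argument: showing that an isomorphism (or projectivity modulo projectives) of the tensor-product bimodule over $k$ forces the corresponding statement over $\CO$. This is not automatic for arbitrary $\CO$-free bimodules, and the reason it works here is the rigidity of $p$-permutation bimodules — any indecomposable summand of $\Ind_\Delta^{G\times H}V$ is determined up to isomorphism by its Brauer quotients, equivalently by its reduction mod $J(\CO)$ — so the care needed is to phrase everything in terms of $p$-permutation bimodules and invoke the appropriate lifting uniqueness (cf. the standard references on lifting $p$-permutation modules and on the Dade group), rather than working with bare $\CO$-lattices. Once that rigidity is in place, the rest is bookkeeping with vertices, sources, and the Mackey formula.
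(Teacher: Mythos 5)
Your overall strategy -- lift the endopermutation source from $k$ to $\CO$, realise the bimodule as a direct summand of one induced from a twisted diagonal subgroup, and then descend the Morita/stable equivalence property -- is the same as the paper's, and your reduction of (b) to (a) over $W(k)$ also matches. But there are two genuine gaps, both hidden inside your appeal to ``$p$-permutation rigidity'', and they are exactly the steps the paper's technical lemmas are there for. First, the lift $V$ of $\bar V$ cannot be arbitrary: its class in the Dade group must be stable under the fusion system $\CF$ of the block determined by a source idempotent. Your assertion that endopermutation modules ``lift uniquely'' is false (the kernel of $D_\CO(P)\to D_k(P)$ is $\Hom(P,\CO^\times)$), and, more to the point, what is needed is surjectivity of $D_\CO(P,\CF)\to D_k(P,\CF)$, i.e.\ the existence of a lift with $\CF$-stable class. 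This is Lemma \ref{lem:stab}, proved via a determinant-one normalisation and Proposition \ref{LiMaProp37}, and it rests on the classification of endopermutation modules -- the paper's remark after the theorem flags precisely this as the essential input.

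Second, lifting the summand $\bar M$ of $k\tenO X$ to a summand of $X$ requires the reduction map $\End_{\CO(G\times H)}(X)\to\End_{k(G\times H)}(k\tenO X)$ to be surjective. This fails for general lattices and does not follow from the theory of $p$-permutation modules, because $X$ (whether you take $\Ind_{\Delta P}^{G\times H}(V)$ or the paper's source-algebra version) is \emph{not} a $p$-permutation bimodule: its source is $V$, not the trivial module, so the Brauer-quotient rigidity you invoke in your last paragraph does not apply. By Mackey and adjunction the obstruction is whether modules of the form $V^\vee\tenO{^{\varphi}{V}}$, for $\varphi$ ranging over the fusion realised in the block, are permutation modules on which fixed points reduce surjectively -- and this is exactly where the $\CF$-stability of $[V]$ from the first step is consumed. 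The paper's Lemma \ref{lem:lift} carries this out by cutting with source idempotents and quoting \cite[Proposition 4.1]{Linendo} to see that $V^\vee\tenO U$ is a permutation $\CO\Delta P$-module. By contrast, the step you single out as the main obstacle -- detecting the Morita or stable equivalence after reduction modulo $J(\CO)$ -- is the routine part (split surjections, projectivity and isomorphisms of lattices are all detected on reduction, as in Section \ref{descentequiv-Section}), and in any case your proposed justification for it via $p$-permutation bimodules is again off target since $M\ten_{\OHc}M^\vee$ need not be a $p$-permutation bimodule.
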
  

\begin{rem} 
The proof of  the  above theorem requires a lifting property  
of fusion stable endopermutation modules from Lemma \ref{lem:stab} 
below, which in turn relies on the classification of endopermutation
modules. The hypothesis on $k$ being large enough is there to
ensure that the fusion systems of the involved blocks are saturated.
The well-known Morita equivalences in block theory such as in the 
context of nilpotent blocks \cite{Punil}, blocks with a normal defect 
group \cite{Kuenormal} and blocks of finite $p$-solvable groups 
\cite{Kue81}, \cite{Puigsolvable}  are all  given  by endopermutation source bimodules  
hence are defined over the Witt 
vectors   and preserve $p$-rational characters and $p$-rational lifts 
of Brauer characters (cf.  Corollary \ref{cor:intro-rational}, Theorem 
\ref{thm:rational}).
\end{rem}  

The paper is organised as follows. Section \ref{virtualMorita-Section}
contains the proof of Theorem~\ref{thm:introvirtual} and Section 
\ref{Galois-Section} contains the proofs of 
Theorem~\ref{thm:intro-basic} and its corollaries. Sections 
\ref{descentequiv-Section}, \ref{descentrelproj-Section} and 
\ref{descentGalois-Section} contain general results on  descent. 
Theorems~\ref{cyclic-splendid-intro}  and \ref{cyclic-splendid-general} 
are proved in Section \ref{cyclic-Section}, and Section 
\ref{basic-Section} contains the proof  of 
Theorem~\ref{thm:basicunramified}.

\begin{Notation}
We will use the above notation  of Galois twists for arbitrary 
extensions of commutative rings $\CO\subseteq$ $\CO'$. That is, given an 
$\CO$-algebra $A$, a module $U$ over the $\CO'$-algebra $A'=$ 
$\CO'\tenO A$ and a ring automorphism $\sigma$ of $\CO'$  which 
restricts to the identity map on $\CO$, we denote  by ${^\sigma{U}}$ 
the $A'$-module which is equal to $U$ as a module over the subalgebra  
$1\ten A$ of $A'$, such that $\lambda\ten a$ acts on $U$ as  
$\sigma^{-1}(\lambda)\ten a$ for all $a\in$ $A$ and $\lambda\in$ $\CO'$. 
Note that if $f : U\to$ $V$ is an $A'$-module homomorphism, then $f$ is 
also an $A'$-module homomorphism ${^\sigma{U}}\to$ ${^\sigma{V}}$. The 
Galois twist induces an $\CO$-linear (but not in general $\CO'$-linear) 
self equivalence on $\modh A'$. 
\end{Notation}

\section{On virtual Morita equivalences} \label{virtualMorita-Section}
  
This section contains the proof of Theorem \ref{thm:introvirtual}.  We 
start with some background observations. Let $(K,\CO,k)$ be a 
$p$-modular system. 
It is well-known that a virtual Morita equivalence between two split
semisimple algebras given by a virtual bimodule and its dual is
equivalent to fixing a bijection between the isomorphism classes of
simple modules of the two algebras together with signs. We sketch the
argument for the convenience of the reader.

\begin{lem} \label{signsLemma}
Let $A$ and $B$ be split semisimple finite-dimensional $K$-algebras.
Let $M$ be a virtual $(A,B)$-bimodule in $\CR(A,B)$. Then $M$ and
$M^\vee$ induce a virtual Morita equivalence between $A$ and $B$ if
and only if there is a bijection $I : \Irr(A)\to$ $\Irr(B)$ and signs 
$\epsilon_S\in$ $\{\pm 1\}$ for all $S\in$ $\Irr(A)$ such that
$$M = \sum_{S\in\Irr(A)} \epsilon_S S \tenK  I(S)^\vee$$
in $\CR(A,B)$. 
\end{lem}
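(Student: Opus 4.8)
The plan is to use the standard structure theory of split semisimple algebras. Write $A \cong \prod_{S \in \Irr(A)} \End_K(S)$ and $B \cong \prod_{T \in \Irr(B)} \End_K(T)$. Then every $(A,B)$-bimodule which is finitely generated projective on each side decomposes as a direct sum of copies of the bimodules $S \tenK T^\vee$ (for $S \in \Irr(A)$, $T \in \Irr(B)$), and these are precisely the indecomposable such bimodules; they form part of a $\Z$-basis of $\CR(A,B)$. Hence an arbitrary element $M \in \CR(A,B)$ supported on $\CP(A,B)$ can be written as $M = \sum_{S,T} m_{S,T}\, S \tenK T^\vee$ with $m_{S,T} \in \Z$. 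The content of the lemma is that $M$ and $M^\vee$ induce a virtual Morita equivalence if and only if the integer matrix $(m_{S,T})$ is a signed permutation matrix.

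First I would record the multiplication rule: for simple modules one has $(S \tenK T^\vee) \cdot_B (T' \tenK U^\vee) \cong \Hom_B(T, T')$-many copies of $S \tenK U^\vee$, which by Schur's lemma over the splitting field $K$ is $S \tenK U^\vee$ if $T \cong T'$ and $0$ otherwise. Dually, $M^\vee = \sum_{S,T} m_{S,T}\, T \tenK S^\vee$. Therefore
$$ M \cdot_B M^\vee \;=\; \sum_{S, S'} \Big( \sum_{T} m_{S,T}\, m_{S',T} \Big)\, S \tenK (S')^\vee $$
in $\CR(A,A)$. Now $[A] = \sum_{S} S \tenK S^\vee$ in $\CR(A,A)$, since as an $(A,A)$-bimodule $A$ is the direct sum of the blocks $\End_K(S) \cong S \tenK S^\vee$. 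Comparing coefficients, the condition $M \cdot_B M^\vee = [A]$ says exactly that $\sum_T m_{S,T} m_{S',T} = \delta_{S,S'}$; that is, the rows of the matrix $(m_{S,T})$ are orthonormal over $\Z$. Symmetrically, $M^\vee \cdot_A M = [B]$ says the columns are orthonormal. An integer matrix with orthonormal rows and orthonormal columns (equivalently, an orthogonal integer matrix, once one checks both conditions together force it to be square, using that the row and column orthogonality force $|\Irr(A)| = |\Irr(B)|$ via rank) is a signed permutation matrix: in each row there is exactly one nonzero entry and it is $\pm 1$. This yields the bijection $I$ (sending $S$ to the unique $T$ with $m_{S,T} \neq 0$) and the signs $\epsilon_S = m_{S, I(S)}$, giving $M = \sum_S \epsilon_S\, S \tenK I(S)^\vee$. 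The converse is the trivial direction: if $M$ has that form, the computation above immediately gives $M \cdot_B M^\vee = \sum_S \epsilon_S^2\, S \tenK I(S)^\vee \cdot_B \cdots = [A]$ and likewise $M^\vee \cdot_A M = [B]$.

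The main obstacle, such as it is, is the elementary linear-algebra step that an integer matrix which is "orthogonal on both sides" must be a signed permutation matrix — in particular that the two algebras must then have the same number of simple modules. One clean way is: from $\sum_T m_{S,T}^2 = 1$ for each $S$, each row already has a single $\pm 1$ entry and all others zero; the column condition then forces the resulting map $S \mapsto I(S)$ to be injective, and by the symmetric argument (rows of $M^\vee$) also surjective. So in fact only the row condition on both $M$ and $M^\vee$ is needed, and the argument is purely formal. I do not anticipate any genuine difficulty; the only care required is to set up the basis of $\CR(A,B)$ and the multiplication $\cdot_B$ on these basis elements correctly, and to note that projectivity on both sides is automatic here since $A$ and $B$ are semisimple, so $\CP(A,B) = \CR(A,B)$.
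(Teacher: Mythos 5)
Your proposal is correct and follows essentially the same route as the paper: expand $M$ in the basis $\{S\tenK T^\vee\}$ of $\CR(A,B)$, compute $M\cdot_B M^\vee$ and $M^\vee\cdot_A M$ via the splitting-field multiplication rule, and observe that the resulting integer identities force the coefficient matrix to be a signed permutation matrix. The only difference is cosmetic (you phrase the final step explicitly in terms of orthonormal rows/columns of an integer matrix and spell out how injectivity and surjectivity of $I$ are obtained from the two conditions, which the paper states more tersely).
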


\begin{proof}
Write $M=$ $\sum_{S,T} a(S,T) S\tenK T^\vee$, with integers $a(S,T)$,
where $S$ and $T$ run over $\Irr(A)$ and $\Irr(B)$, respectively.
Since $B$ is split semisimple, we have $T^\vee\tenB T\cong$ $K$ and
$T^\vee\tenB T'=$ $\{0\}$, where $T$, $T'\in$ $\Irr(B)$, $T\not\cong$ 
$T'$. Thus $M \cdot_B\ M^\vee=$ 
$\sum_{S,S',T} a(S,T)a(S',T) S'\tenK S^\vee$,
with $S$, $S'$ running over $\Irr(A)$ and $T$ running over $\Irr(B)$.
We have the analogous formula for $M^\vee \cdot_A\ M$. Since
$A$ is split semisimple, we have $[A]=$ $\sum_{S}\ S\tenK S^\vee$.
Thus $M$, $M^\vee$ induce a virtual Morita equivalence if and only if
$\sum_{T} a(S,T)^2=1$ for all $S\in$ $\Irr(A)$, and
$\sum_{T} a(S,T)a(S',T) =0$ for any two distinct $S$, $S'$ in $\Irr(A)$. 
Since the $a(S,T)$ are integers, the first equation implies that for any
$S$ there is a unique $T=$ $I(S)$ such that $a(S,T)\in$ $\{\pm 1\}$ and
$a(S,T')=0$ for $T'\neq$ $T$. The second equation implies that $I$ is a
bijection. The result follows with $\epsilon_S=$ $a(S,I(S))$.  
\end{proof}

We will use the transfer maps in Hochschild cohomology from
\cite{Lintransfer}, specialised in degree $0$; we sketch the 
construction. 
Let $A$ and $B$ be symmetric $\CO$-algebras with fixed symmetrising
forms. Let $M$ be an  $(A,B)$-bimodule which is finitely generated 
projective as left  $A$-module and as right $B$-module. Then the
functors $M\tenB -$ and $M^\vee \tenA-$ are biadjoint; the choice of
the symmetrising forms determines adjunction isomorphisms. Let $y\in$ 
$Z(B)$. Multiplication by $y$ induces a $(B, B)$-bimodule
endomorphism of $B$. Tensoring by $M\tenB - \tenB M^\vee$ yields
an $A$-$A$-bimodule endomorphism of $M\tenB M^\vee$. 
Composing and precomposing this endomorphism by the
adjunction counit $M\tenB M^\vee\to$ $A$ and the adjunction unit 
$A\to$ $M\tenB M^\vee$ yields an $(A, A)$-bimodule endomorphism
of $A$, which in turn yields a unique element $z\in$ $Z(A)$ which 
induces this endomorphism by multiplication on $A$.  We define
the linear map $\tr_M : Z(B)\to$ $Z(A)$ by setting $\tr_M(y) = z$,
with $y$ and $z$ as above. The map $\tr_M$ is additive in $M$ (cf.
\cite[2.11.(i)]{Lintransfer}), depends only on the isomorphism class
of $M$ (cf. \cite[2.12.(iii)]{Lintransfer})  and is compatible with 
tensor products of bimodules (cf. \cite[2.11.(ii)]{Lintransfer}). 
In general, $\tr_M$ depends on the choice of the symmetrising forms 
(because the adjunction units and counits depend on this choice), but 
there is one case where it does not:

\begin{lem} \label{transferA}
Let $A$ be a symmetric $\CO$-algebra. Consider $A$ as an 
$(A, A)$-bimodule. Then $\tr_A =$ $\Id_{Z(A)}$.
\end{lem}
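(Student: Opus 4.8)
The plan is to unwind the definition of $\tr_M$ in the special case $M = A$, the algebra viewed as an $(A,A)$-bimodule via left and right multiplication. The key point is that for $M = A$ the biadjunction between $A \tenA -$ and $A^\vee \tenA -$ is essentially trivial: $A \tenA N \cong N$ naturally for any bimodule $N$, so tensoring the multiplication-by-$y$ endomorphism of $A$ with $A \tenA - \tenA A^\vee$ just reproduces the same endomorphism up to canonical isomorphism. First I would make explicit the adjunction unit and counit associated with a chosen symmetrising form $s : A \to \CO$ on $A$. The counit $A \tenA A^\vee \to A$ and the unit $A \to A \tenA A^\vee$ are built from the symmetrising form, and the standard computation (essentially the fact that the coevaluation/evaluation for the dual of $A$ over itself, composed in the order dictated by a biadjunction, gives the identity) shows that the composite $A \to A \tenA A^\vee \to A$ along which we transport multiplication-by-$y$ is the identity map of $A$ — this is the one case where the dependence on the symmetrising form washes out, because $A \tenA A^\vee \cong \End_A(A) \cong A$ as bimodules in a way that intertwines the two adjunction morphisms into the identity.

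Concretely, I would argue as follows. Identify $A \tenA A^\vee$ with $A^\vee$ (as $(A,A)$-bimodules, using $A \tenA A^\vee \cong A^\vee$), and then with $A$ itself using the bimodule isomorphism $A \cong A^\vee$ coming from the symmetrising form. Under these identifications, the $(A,A)$-bimodule endomorphism of $A \tenA A^\vee$ obtained by tensoring "multiplication by $y \in Z(A)$ on $A$" with $A \tenA - \tenA A^\vee$ becomes simply multiplication by $y$ on $A$. The adjunction counit $A \tenA A^\vee \to A$ and unit $A \to A \tenA A^\vee$, transported along the same identifications, become mutually inverse bimodule isomorphisms of $A$ (this is exactly the content of the biadjunction triangle identities in this degenerate case); hence composing them with the endomorphism "multiply by $y$" again yields "multiply by $y$" on $A$. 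Therefore the element $z \in Z(A)$ inducing this endomorphism is $z = y$, i.e. $\tr_A(y) = y$ for all $y \in Z(A)$, which is the claim $\tr_A = \Id_{Z(A)}$.

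Alternatively — and this may be the cleaner route to write up — I would invoke the compatibility of $\tr_M$ with tensor products of bimodules (\cite[2.11.(ii)]{Lintransfer}) together with the fact that $A \tenA A \cong A$: since $\tr$ is multiplicative under $\cdot_A$ of bimodules and $A$ is the unit for this operation, $\tr_A$ is an idempotent $\CO$-linear endomorphism of $Z(A)$ that is also invertible (being, by the same multiplicativity applied to $A \cdot_A A = A$, equal to $\tr_A \circ \tr_A$), hence equal to the identity; one still needs the normalisation $\tr_A(1) = 1$, which follows directly from the construction since for $y = 1$ the endomorphism of $A$ is the identity and the counit-unit composite sends $\Id_A$ to $\Id_A$. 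I would pick whichever of these two arguments the surrounding text of \cite{Lintransfer} supports most directly.

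The main obstacle is bookkeeping: making the identifications $A \tenA A^\vee \cong A^\vee \cong A$ precise and checking that the adjunction unit and counit determined by the symmetrising form really do become the identity (or mutually inverse isomorphisms) under these identifications, rather than some scalar multiple or twist. This is a standard but slightly fiddly verification about how biadjunction data is constructed from symmetrising forms; the risk is an off-by-a-unit error. Since the paper explicitly cites \cite{Lintransfer} for the properties of $\tr_M$, I expect the intended proof is short and leans on those cited properties (additivity, multiplicativity, dependence only on isomorphism class) rather than redoing the adjunction calculus from scratch, so I would aim to reduce to the multiplicativity argument above and handle the normalisation $\tr_A(1) = 1$ by a direct inspection of the construction.
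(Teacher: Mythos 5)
Your first argument is essentially the paper's own proof: the paper uses the dual-basis description of the adjunction maps determined by the symmetrising form $s$ to check that the unit $A\to A\tenA A^\vee$ sends $1_A$ to $1_A\ten s$ and the counit sends this back to $\sum_{x\in X}s(x')x=1_A$, so the composite through which multiplication by $y$ is transported is the identity on $A$ and hence $\tr_A(y)=y$. Your alternative argument, however, contains a logical slip. From $\tr_{A\tenA A}=\tr_A\circ\tr_A=\tr_A$ you get only that $\tr_A$ is idempotent, and your parenthetical justification of invertibility merely restates that idempotency; moreover, the normalisation $\tr_A(1_A)=1_A$ does not rescue the argument, since an idempotent $\CO$-linear endomorphism of $Z(A)$ fixing $1_A$ can be a proper projection onto a subspace containing $1_A$. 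What is actually needed to close that route is the invertibility of $\tr_A$, which the paper obtains by observing that the adjunction unit and counit between $A$ and $A\tenA A^\vee$ are bimodule isomorphisms (so $\tr_A$ is a linear automorphism); an invertible idempotent is the identity, with no normalisation required. Either your explicit computation or this corrected version of the second argument completes the proof.
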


\begin{proof}
Let $s : A\to$ $\CO$ be a symmetrising form of $A$, and let $X$ be an 
$\CO$-basis of $A$. Denote by $X'$ the dual basis of $A$ with respect to
$s$; for $x\in$ $X$, denote by $x'$ the unique element in $X'$ satisfying
$s(xx')=1$ and $s(yx')=0$, for all $y\in$ $X\setminus \{x\}$. 
The well-known explicit description of the adjunction maps (see e. g. 
\cite[Appendix]{Lintransfer})
implies that the adjunction unit $A\to$ $A\tenA A^\vee$ sends $1_A$ to 
$1_A\ten s$, and the adjunction counit $A\tenA A^\vee\to$ $A$ sends
$1_A\ten s$ to $\sum_{x\in X}\ s(x')x$, which is equal to $1_A$ by
\cite[Appendix 6.3.3]{Lintransfer}. One can prove this also without those
explicit descriptions, by first observing that the above adjunction maps 
are isomorphisms, and  deduce from this that $\tr_A$ is a linear 
automorphism.  Since $\tr_A\circ\tr_A=$ $\tr_{A\tenA A}=$ $\tr_A$, this 
implies that $\tr_A=$ $\Id_{Z(A)}$. 
\end{proof}

In order to show that $\tr_M$ is well-defined with $M$ replaced by any
element in the Grothendieck group $\CP(A,B)$, we need the following
observation.

\begin{lem} \label{transferGroth}
Let $A$, $B$ be symmetric $\CO$-algebras with chosen symmetrising
forms. Let $M_0$, $M_1$, $N_0$, $N_1$ be $(A,B)$-bimodules which are 
finitely generated projective as left and as right modules. If 
$[M_0]-[M_1]=$ $[N_0]-[N_1]$ in $\CP(A,B)$, then $\tr_{M_0}-\tr_{M_1}=$
$\tr_{N_0}-\tr_{N_1}$. 
\end{lem}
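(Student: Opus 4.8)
The plan is to reduce Lemma~\ref{transferGroth} to additivity of $\tr_M$ in the bimodule variable, together with the observation that $\tr_M$ depends only on the isomorphism class of $M$. The key point is that the equality $[M_0]-[M_1]=[N_0]-[N_1]$ in $\CP(A,B)$ does not a priori say anything module-theoretic, because $\CP(A,B)$ is the Grothendieck group with respect to \emph{split} exact sequences, and a priori it is a subgroup of $\CR(A,B)$ whose elements are $\Z$-linear combinations of classes of indecomposable bimodules. So the first step is to rewrite the hypothesis as $[M_0]+[N_1]=[N_0]+[M_1]$ in $\CR(A,B)$, i.e.\ in the free abelian group on isomorphism classes of indecomposable $(A,B)$-bimodules. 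Since this is a \emph{free} abelian group with basis the indecomposables, equality of these two elements forces $M_0\oplus N_1\cong N_0\oplus M_1$ as $(A,B)$-bimodules, by the Krull--Schmidt theorem (which applies since $A\tenO B^{\op}$ is a finitely generated $\CO$-algebra over a complete local ring, so bimodules that are finitely generated over $\CO$ have unique decompositions into indecomposables).

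Second, I would note that each of $M_0\oplus N_1$ and $N_0\oplus M_1$ is again finitely generated projective as a left $A$-module and as a right $B$-module, being a direct sum of bimodules with that property, so $\tr$ is defined on each of them. Now I invoke the additivity of $\tr_M$ in $M$ from \cite[2.11.(i)]{Lintransfer}: $\tr_{M_0\oplus N_1}=\tr_{M_0}+\tr_{N_1}$ and $\tr_{N_0\oplus M_1}=\tr_{N_0}+\tr_{M_1}$. (Strictly speaking \cite[2.11.(i)]{Lintransfer} is phrased as additivity in short exact sequences of bimodules that split on both sides; a direct sum is such a sequence, so this is exactly what is needed.) Third, since $M_0\oplus N_1\cong N_0\oplus M_1$, the invariance of $\tr$ under isomorphism \cite[2.12.(iii)]{Lintransfer} gives $\tr_{M_0\oplus N_1}=\tr_{N_0\oplus M_1}$. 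Combining the last two displays yields $\tr_{M_0}+\tr_{N_1}=\tr_{N_0}+\tr_{M_1}$, which rearranges to $\tr_{M_0}-\tr_{M_1}=\tr_{N_0}-\tr_{N_1}$, as required.

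The only genuinely delicate point is the passage from an identity in the Grothendieck group $\CP(A,B)$ to an honest isomorphism of bimodules. This is where the Krull--Schmidt property is essential: one must know that $\CR(A,B)$ really is free on the indecomposables and that cancellation holds, so that $[M_0]+[N_1]=[N_0]+[M_1]$ implies $M_0\oplus N_1\cong N_0\oplus M_1$ rather than merely some stable-isomorphism statement. The hypotheses of the paper (algebras finitely generated free over the complete discrete valuation ring $\CO$, all modules finitely generated) are exactly what guarantee this, as recalled in the discussion of $\CR(A)$ in the introduction. Everything after that is a formal manipulation using the three cited properties of the transfer map (additivity, isomorphism-invariance, and — not even needed here — multiplicativity). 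I do not expect any computational obstruction; the proof is short once the Krull--Schmidt reduction is made explicit.
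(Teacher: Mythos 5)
Your proof is correct and follows essentially the same route as the paper's: rewrite the hypothesis as $[M_0\oplus N_1]=[N_0\oplus M_1]$, apply the Krull--Schmidt theorem to get an actual bimodule isomorphism $M_0\oplus N_1\cong N_0\oplus M_1$, and then conclude by additivity and isomorphism-invariance of the transfer maps. Your additional commentary on why Krull--Schmidt applies and why the direct summands remain projective on both sides is a correct elaboration of points the paper leaves implicit.
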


\begin{proof}
The equality  $[M_0]-[M_1]=$ $[N_0]-[N_1]$ is equivalent to
$[M_0\oplus N_1]=$ $[N_0\oplus M_1]$. The Krull-Schmidt Theorem
implies that this is equivalent to $M_0\oplus N_1\cong$ $N_0\oplus M_1$.
The additivity of transfer maps implies that in that case we have
$\tr_{M_0} + \tr_{N_1} =$ $\tr_{N_0} + \tr_{M_1}$, whence the result.
\end{proof}

This Lemma implies that if $A$, $B$ are symmetric $\CO$-algebras with 
chosen symmetrising forms, then for any $M\in$ $\CP(A,B)$ we have a 
well-defined map $\tr_M : Z(A)\to$ $\Z(B)$ given by $\tr_M=$ 
$\tr_{M_0}-\tr_{M_1}$, where $M_0$, $M_1$ are $(A,B)$-bimodules which 
are finitely generated as left and right modules such that $M=$ 
$[M_0]-[M_1]$.

\begin{lem} \label{transfertensor}
Let $A$, $B$, $C$ be symmetric $\CO$-algebras. Let $M\in$ $\CP(A,B)$ and
$N\in$ $\CP(B,C)$. Then $\tr_{M\cdot_{B}\ N} =$ $\tr_M\circ \tr_N$. In 
particular, if $M$ and $M^\vee$ induce a virtual Morita equivalence 
between $A$ and $B$, then $\tr_M : Z(A)\to$ $Z(B)$ is a linear 
isomorphism with inverse $\tr_{M^\vee}$.
\end{lem}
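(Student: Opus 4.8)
The plan is to establish the multiplicativity $\tr_{M\cdot_{B}\ N}=\tr_M\circ\tr_N$ first for genuine bimodules, then to extend it to $\CP$ by biadditivity, and finally to deduce the ``in particular'' clause from Lemmas~\ref{transferA} and~\ref{transferGroth}.

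\emph{Step 1: genuine bimodules.} Suppose $M$ is an honest $(A,B)$-bimodule and $N$ an honest $(B,C)$-bimodule, each finitely generated projective as a left and as a right module. Then $M\tenB N$ is an $(A,C)$-bimodule with the same finiteness and projectivity properties, and the biadjunction of $(M\tenB N)\otimes_C-$ with $(M\tenB N)^\vee\tenA-\cong N^\vee\tenB M^\vee\tenA-$ is the composite of the biadjunctions of $N\otimes_C-$ with $N^\vee\tenB-$ and of $M\tenB-$ with $M^\vee\tenA-$, the unit (resp. counit) of the composite being obtained by inserting the inner unit (resp. counit). Feeding a multiplication endomorphism of $C$ into the definition of $\tr_{M\tenB N}$ and using this factorisation of the adjunction (co)units, the inner (co)units reassemble to compute first $\tr_N:Z(C)\to Z(B)$ and then $\tr_M:Z(B)\to Z(A)$; this is exactly the degree-zero specialisation of \cite[2.11.(ii)]{Lintransfer}, so $\tr_{M\tenB N}=\tr_M\circ\tr_N$ in this case.

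\emph{Step 2: extension to $\CP$.} Both assignments $(M,N)\mapsto\tr_{M\cdot_{B}\ N}$ and $(M,N)\mapsto\tr_M\circ\tr_N$ are biadditive maps $\CP(A,B)\times\CP(B,C)\to\Hom_\CO(Z(C),Z(A))$: the first because $-\cdot_B-$ is biadditive and $X\mapsto\tr_X$ is a group homomorphism on $\CP(A,C)$ (the well-definedness recorded after Lemma~\ref{transferGroth}, resting on \cite[2.11.(i)]{Lintransfer} and Lemma~\ref{transferGroth}), the second because $X\mapsto\tr_X$ is additive in each variable and composition of $\CO$-linear maps is biadditive. By Step 1 these two biadditive maps agree on the pairs $([M_0],[N_0])$ with $M_0$, $N_0$ genuine bimodules, and such pairs generate $\CP(A,B)\times\CP(B,C)$; hence they agree everywhere. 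This proves the first assertion of the lemma.

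\emph{Step 3: the virtual Morita case, and the main obstacle.} If $M$ and $M^\vee$ induce a virtual Morita equivalence between $A$ and $B$, then $M\cdot_B M^\vee=[A]$ in $\CP(A,A)$ and $M^\vee\cdot_A M=[B]$ in $\CP(B,B)$. Applying Step 2 and its $B$-$A$-$B$ analogue yields $\tr_M\circ\tr_{M^\vee}=\tr_{M\cdot_B M^\vee}=\tr_{[A]}=\tr_A=\Id_{Z(A)}$, and likewise $\tr_{M^\vee}\circ\tr_M=\Id_{Z(B)}$, where $\tr_{[A]}=\tr_A$ because the Grothendieck-group transfer agrees with the original one on genuine bimodules, and $\tr_A=\Id_{Z(A)}$ by Lemma~\ref{transferA}; thus $\tr_M$ is an $\CO$-linear isomorphism with inverse $\tr_{M^\vee}$. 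The one point to watch is that the individual maps $\tr_M$ and $\tr_{M^\vee}$ depend on the chosen symmetrising form of the middle algebra $B$, so one must check that the composition formula does not secretly depend on that choice; this is automatic, since the right-hand side $\tr_{M\cdot_{B}\ N}$ involves no symmetrising form on $B$ at all, which is precisely why \cite{Lintransfer} phrases multiplicativity in this way. Beyond this the only care needed is bookkeeping, keeping the convention $\tr_X:Z(C)\to Z(A)$ for $X\in\CP(A,C)$ consistent throughout Step 2; the genuinely substantive input is Step 1, i.e. \cite[2.11.(ii)]{Lintransfer}.
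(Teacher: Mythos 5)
Your proof is correct and follows essentially the same route as the paper's: cite \cite[2.11.(ii)]{Lintransfer} for genuine bimodules, extend to $\CP$ by biadditivity via Lemma~\ref{transferGroth}, and deduce the virtual Morita consequence from Lemma~\ref{transferA}. The only difference is that the paper compresses this into two sentences while you spell out the biadditivity argument and add the (correct) observation that the proven formula incidentally shows $\tr_M\circ\tr_{M^\vee}$ is independent of the symmetrising form chosen on $B$.
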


\begin{proof}
The first equality follows from the corresponding equality 
\cite[2.11.(ii)]{Lintransfer} where $M$ and $N$ are actual bimodules, 
together with \ref{transferGroth}.
The second statement follows from the first and \ref{transferA}.
\end{proof}

\begin{rem} 
The three lemmas \ref{transferA}, \ref{transferGroth}, 
\ref{transfertensor} hold verbatim for the transfer maps on the 
Hochschild cohomology of $A$ in $B$ in any non-negative degree, and
with $\CO$ replaced by any complete local principal ideal domain.
\end{rem}

\begin{proof}[{Proof of Theorem \ref{thm:introvirtual}}]
We use the notation and hypotheses from Theorem \ref{thm:introvirtual}. 
Write $M=$ $[M_0]-[M_1]$, where $M_0$, $M_1$ are $(A,B)$-bimodules
which are finitely generated projective as left $A$-modules and as right 
$B$-modules.  By \ref{signsLemma} there exist a bijection 
$I : \Irr(K'A)\to$ $\Irr(K'B)$ and signs $\epsilon_\chi\in$ $\{\pm 1\}$ 
such that $\epsilon_\chi \chi=$ $K'M \cdot_{K'B}\  I(\chi)$ in 
$\CR(K'A)$ for all $\chi\in$ $\Irr(K'A)$. By \ref{transfertensor}, the 
linear map $\tr_M : Z(B)\to$ $Z(A)$ is an isomorphism, with inverse 
$\tr_{M^\vee}$. Let $v\in Z(B)$ such that $\tr_M(v)=1_A$ and let
$u\in$ $Z(A)$ such that $\tr_{M^\vee}(u)=1_B$. Define linear maps
$\alpha : Z(A)\to$ $Z(B)$ and $\beta : Z(B)\to$ $Z(A)$ by setting
$$\alpha(z) = \tr_{M^\vee}(uz)$$
$$\beta(y) = \tr_M(vy)$$
for all $z\in$ $Z(A)$ and $y\in$ $Z(B)$. By the choice of $u$ and $v$ we 
have $\alpha(1_A)=1_B$ and $\beta(1_B)=1_A$. We extend $\alpha$ and 
$\beta$ $K'$-linearly to maps, still called $\alpha$, $\beta$, between 
$Z(K'A)$ and $Z(K'B)$. Setting $K'M=$ $K'\tenO M$ as before, note that 
the transfer map $\tr_{K'M} : Z(K'B)\to$ $Z(K'A)$ is the $K'$-linear 
extension of $\tr_M$. Note further that $K'M = $ 
$\sum_{\chi\in\Irr(K'A)} \epsilon_\chi \chi\ten_{K'} I(\chi)^\vee $ and 
$\chi\ten_{K'}\ I(\chi)^\vee=$ $e_\chi K'M e_{I(\chi)}$. Thus 
$\tr_{K'M} =$ 
$\sum_{\chi\in\Irr(K'A)}\ \epsilon_\chi \tr_{{e_\chi} K'M e_{I(\chi)}}$.
In particular, $\tr_{K'M}$ sends $K'e_\chi$ to $K'e_{I(\chi)}$. 
 
Let $\chi\in$ $\Irr(K'A)$ and $\eta=$ $I(\chi)$. We have 
$\beta(e_\eta)=$ $\tr_{K'M}(ve_\eta)$. Since $Z(K'B)$ is a direct 
product of copies of $K'$, it follows that $v e_\eta=$ 
$\lambda_\chi e_\eta$ for some $\lambda_\chi\in$ $K'$. Thus 
$\tr_{K'M}(ve_\eta)=$ $\mu_\chi e_\chi$ for some $\mu_\chi\in$ $K'$. 
Therefore
$$1_A = \tr_{M}(v) = \sum_{\chi\in\Irr(K'A)} \ \tr_{K'M}(v e_{I(\chi)}) 
= \sum_{\chi\in\Irr(K'A)}\ \mu_\chi e_\chi\ .$$
Since also $1_A=$ $\sum_{\chi\in\Irr(K'A)}\ e_\chi$, the linear
independence of the $e_\chi$ implies that all $\mu_\chi$ are $1$,
hence that $\alpha(e_\chi)=$ $e_{I(\chi)}$. This shows that $\alpha$ and
$\beta$ are inverse algebra isomorphisms $Z(K'A)\cong$ $Z(K'B)$. By 
their constructions, $\alpha$ maps $Z(A)$ to $Z(B)$ and $\beta$ maps 
$Z(B)$ to $Z(A)$. This proves statement (a). This shows also that the
isomorphism $Z(K'A)\cong$ $Z(K'B)$ sending $e_\chi$ to $e_{I(\chi)}$
induces an isomorphism $Z(KA)\cong$ $Z(KB)$. In other words, since 
$Z(K'A)=$ $K'\ten_K Z(KA)$ and $Z(K'B)=$ $K'\ten_K Z(KB)$, it follows 
that the above isomorphism $Z(K'A)\cong$ $Z(K'B)$ is obtained from 
$K'$-linearly extending an isomorphism $Z(KA)\cong$ $Z(KB)$, which 
implies that this isomorphism commutes with the action of $\Aut(K'/K)$, 
whence statement (b).
\end{proof}

By Lemma \ref{signsLemma}, a virtual Morita equivalence between split 
semisimple finite-dimensional algebras given by a virtual bimodule and 
its dual is equivalent to a character bijection with signs. If the
compatibility of the character bijection with Galois automorphisms is 
all one wants to establish, one does not need to descend to valuation 
rings. For the sake of completeness, we spell this out for block 
algebras; this is an easy consequence of results of Brou\'e  
\cite{Broue90}. 

\begin{pro}\label{pro:basic}  
Let $G$ and $H$ be finite groups and let $K'/K$ be a finite Galois 
extension such that $K'$ is a splitting field for both $G$ and $H$.  
Let  $b$  be a central idempotent of $K'G$ and $c$ a central idempotent 
of $K'H$ and let $I : \Irr (K'Gb) \to \Irr(K'Hc)$  be a bijection. 
Suppose that there exist signs $\delta_{\chi} \in \{\pm 1\}$ for any
$\chi \in \Irr(K'Gb)$, such that the virtual bicharacter 
$\mu:=\sum_{\chi\in \Irr(K'Gb)} \delta_\chi (\chi\times I(\chi))$
of $G\times H$ takes values in $K$. Then $b\in KG $ and $c\in KH $.   
Moreover, the following hold.

\begin{enumerate}[\rm (a)]
\item  
For all $\sigma \in \Gal (K'/K)  $ and all  $\chi \in \Irr(K'Gb) $  
we  have $I({^{\sigma }\chi}) =\,^\sigma{I(\chi)} $. 

\item 
The  $K'$-algebra isomorphism $Z(K'Gb) \to Z(K'Hc)$ sending $e_{\chi}$ 
to $e_{I(\chi)}$ for all $\chi \in \Irr(K'Gb)$ restricts to a 
$K$-algebra automorphism  $Z(KGb) \to Z(KHc) $.
\end{enumerate}
\end{pro}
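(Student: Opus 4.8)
The plan is to reduce Proposition~\ref{pro:basic} to Theorem~\ref{thm:introvirtual} by exhibiting the bicharacter $\mu$ as the character of a virtual bimodule inducing a virtual Morita equivalence, but over the splitting field $K'$ rather than over $\CO$. First I would observe that since $\mu$ takes values in $K$, for every $\sigma\in\Gal(K'/K)$ we have ${}^\sigma\mu=\mu$, where $\Gal(K'/K)$ acts on the virtual character of $G\times H$ by acting on values (equivalently, by $\sigma$-twisting the corresponding virtual $K'(G\times H)$-module). Writing $\mu=\sum_\chi \delta_\chi(\chi\times I(\chi))$ and using that $\{\chi\times\psi\}$ is a $\Z$-basis of the virtual characters of $G\times H$ over $K'$, the equality ${}^\sigma\mu=\mu$ forces ${}^\sigma(\chi\times I(\chi))=\chi'\times I(\chi')$ with matching signs $\delta_\chi=\delta_{\chi'}$ for the appropriate $\chi'$; since the action on the product basis is diagonal, $\sigma$ permutes $\{\chi\}$ and $\{I(\chi)\}$ compatibly, i.e.\ ${}^\sigma\chi\mapsto{}^\sigma I(\chi)$ under $I$. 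That is precisely statement~(a), and it also shows $b=\sum_\chi e_\chi$ and $c=\sum_\chi e_{I(\chi)}$ are each fixed by $\Gal(K'/K)$, hence $b\in KG$ and $c\in KH$ by Galois descent for the group algebra.

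Next I would set up the virtual Morita equivalence. Take $A=K'Gb$ and $B=K'Hc$; these are split semisimple, hence symmetric, $K$-algebras in the sense used in the paper after replacing $\CO$ by the field $K'$. Define $M=\sum_{\chi\in\Irr(K'Gb)}\delta_\chi\, S_\chi\otimes_{K'} S_{I(\chi)}^\vee\in\CR(A,B)$, where $S_\chi$ is the simple module affording $\chi$. By Lemma~\ref{signsLemma}, $M$ and $M^\vee$ induce a virtual Morita equivalence between $A$ and $B$. Now apply Theorem~\ref{thm:introvirtual} with the roles of $(\CO,A,B)$ played by $(K',K'Gb,K'Hc)$ — note the theorem explicitly permits replacing $\CO$ by a field — to obtain the algebra isomorphism $Z(A)\cong Z(B)$ sending $e_\chi\mapsto e_{I(\chi)}$, together with part~(b) of that theorem, which says this isomorphism commutes with $\Aut(K'/K)$, hence with $\Gal(K'/K)$. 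Taking $\Gal(K'/K)$-fixed points on both sides and using $Z(K'Gb)=K'\otimes_K Z(KGb)$ and $Z(K'Hc)=K'\otimes_K Z(KHc)$, the isomorphism restricts to a $K$-algebra isomorphism $Z(KGb)\to Z(KHc)$, which is statement~(b).

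The main obstacle I anticipate is purely bookkeeping: making sure the hypotheses of Theorem~\ref{thm:introvirtual} are met when $\CO$ is replaced by the field $K'$ (the symmetry of split semisimple algebras, the identification $\CP(A,B)=\CR(A,B)$ in the semisimple case, and that $K'\otimes_{K'}A=A$ is trivially split semisimple). There is also a small subtlety in the twisting-versus-conjugation conventions: one must check that the action of $\sigma\in\Gal(K'/K)$ on the bicharacter $\mu$ corresponds correctly to ${}^\sigma(-)$ on both tensor factors, so that ${}^\sigma\mu=\mu$ genuinely yields the stated equivariance $I({}^\sigma\chi)={}^\sigma I(\chi)$ rather than its inverse-twist. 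Once these conventions are pinned down, everything else follows formally from Lemma~\ref{signsLemma}, Theorem~\ref{thm:introvirtual}, and Galois descent of fixed points, so the proof is short. As remarked in the text, the content of this proposition is essentially already in Brou\'e~\cite{Broue90}; the point here is that it does not require descending to $\CO$.
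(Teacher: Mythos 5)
Your treatment of the first assertion and of part (a) is exactly the paper's: since $\mu$ takes values in $K$, each summand $\delta_\chi(\chi\times I(\chi))$ forces $\delta_\chi({}^{\sigma}\chi\times{}^{\sigma}I(\chi))$ to be a summand as well; comparing with the unique summand of $\mu$ whose first factor is ${}^{\sigma}\chi$ gives $I({}^{\sigma}\chi)={}^{\sigma}I(\chi)$ and the $\Gal(K'/K)$-stability of $b$ and $c$, hence $b\in KG$ and $c\in KH$.

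For part (b) you take a genuinely different route. The paper simply quotes the explicit formulas for the central isomorphism in the proof of \cite[Th\'eor\`eme 1.5]{Broue90} and notes that they are defined over $K$ because $\mu$ is. Your alternative --- $\sigma(e_\chi)=e_{{}^{\sigma}\chi}$, so part (a) makes the map $e_\chi\mapsto e_{I(\chi)}$ commute with $\Gal(K'/K)$, and then $Z(K'Gb)^{\Gal(K'/K)}=Z(KGb)$ (using $b\in KG$) gives the restriction by taking fixed points --- is correct and more self-contained. However, the intermediate appeal to Lemma \ref{signsLemma} and Theorem \ref{thm:introvirtual} is misapplied and should be deleted. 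If you let the base ring of Theorem \ref{thm:introvirtual} be $K'$ with algebras $K'Gb$ and $K'Hc$, then the extension field in that theorem must extend $K'$ (one takes $K'$ itself), so its part (b) only asserts commutation with $\Aut(K'/K')=\{1\}$ and its part (a) is a tautology; the theorem cannot output $\Gal(K'/K)$-equivariance in this configuration. To run that theorem with base field $K$ you would need the virtual bimodule $\sum_\chi\delta_\chi\, S_\chi\otimes_{K'}S_{I(\chi)}^{\vee}$ to be defined over $K$, which is not available (the $S_\chi$ need not have $K$-forms); as the paper remarks before the proposition, the whole point is to obtain the conclusion without any such descent of modules. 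Fortunately nothing is lost: the equivariance you hoped to extract from Theorem \ref{thm:introvirtual}(b) is precisely your part (a), so your fixed-point argument stands on its own once the detour is removed.
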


\begin{proof}
The hypothesis that $\mu$ takes values in $K$ implies that if
$\delta_\chi(\chi\times I(\chi))$ is a summand of $\mu$, then so
is $\delta_\chi({^\sigma\chi}\times {^\sigma{I(\chi)}})$. This shows 
that $b$ and $c$ are $\Gal(K'/K)$-stable, hence contained in $KG$ and 
$KH$, respectively, and it shows that $I$ commutes with the action of 
$\Gal(K'/K)$ as stated in (a). Again since $\mu$ takes values in $K$, it 
follows from the explicit formulas of the central isomorphism 
$Z(K'Gb)\cong$ $Z(K'Hc)$ in the proof of  
\cite[Th\'eor\`eme 1.5]{Broue90}  that this isomorphism restricts to 
an isomorphism $Z(KGb)\cong$ $Z(KHc)$.
\end{proof}  

 \section{Characters and  Galois automorphisms}  \label{Galois-Section}

\begin{defi}  \label{defi:pmodext}
An {\it extension of a $p$-modular system} $(K, \CO, k)$  is a  
$p$-modular system  $(K', {\CO}', k')$ such that $\CO$ is a subring of   
$\CO' $, with $J(\CO)\subseteq J({\CO}')$.  
\end{defi} 

In the situation of the above definition, we write $(K,\CO,k)\subseteq$ 
$(K',\CO',k')$, and whenever convenient, we identify without further 
notice $K$ as a subfield of $K'$ and $k$ as a subfield of $k'$ in the 
obvious way.

In this section we fix a $p$-modular system $(K, \CO, k)$ such that  
$k$ is perfect. Denote by $\bar K$ a fixed algebraic closure of $K$ and 
${\mathbb Q}_n $  the $n$-th cyclotomic extension of $\Q$   
in $\bar K$. We denote  by ${\mathcal G}_n$ the group 
$\Gal({\mathbb Q}_n/{\mathbb Q})$. The following lemma combines some
basic facts on extensions of complete discrete valuation rings; we include
proofs for the convenience of the reader. 

\begin{lem} \label{lem:stablering}  
Let $(K', {\CO}', k')$ be an extension  of  the $p$-modular system 
$(K,\CO,k)$ such that $K'$ is a normal extension of $K$.  Then,   
$\CO' $ is     $\Gal(K'/K)  $-invariant, and  $k'/k $  is a Galois 
extension. Moreover, if $\CO$ is absolutely  unramified, then  the    
homomorphism $\Gal (K'/K) \to  \Gal( k'/k) $  induced by restriction 
to  $\CO'$ is  surjective.
\end{lem}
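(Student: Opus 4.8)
The plan is to treat the three assertions in turn, using standard valuation-theoretic facts. First, for the $\Gal(K'/K)$-invariance of $\CO'$: since $\CO'$ is a complete discrete valuation ring with field of fractions $K'$, it is the integral closure of $\CO$ in $K'$ (every element of $\CO'$ is integral over $\CO$ because $\CO'$ is finitely generated — or, in the non-finite case, a filtered union of finite subextensions — and conversely $\CO'$, being a valuation ring, is integrally closed in $K'$; the condition $J(\CO)\subseteq J(\CO')$ forces the valuation on $K'$ to restrict to the one on $K$ up to equivalence, so the integral closure of $\CO$ in $K'$ is exactly $\CO'$). Any $\sigma\in\Gal(K'/K)$ permutes the set of elements of $K'$ integral over $\CO$, since it fixes $\CO$ pointwise and preserves monic polynomials over $\CO$; hence $\sigma(\CO')=\CO'$. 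Consequently $\sigma$ preserves $J(\CO')$, the unique maximal ideal, and so induces a $k$-algebra automorphism $\bar\sigma$ of $k'=\CO'/J(\CO')$; this gives the restriction homomorphism $\Gal(K'/K)\to\Aut(k'/k)$.

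Second, to see that $k'/k$ is Galois: $k'/k$ is algebraic, and it is normal because $K'/K$ is normal — given $\bar\alpha\in k'$, lift to $\alpha\in\CO'$, take the minimal polynomial $f$ of $\alpha$ over $K$ (which has coefficients in $\CO$ since $\alpha$ is integral and $\CO$ is integrally closed), reduce mod $J(\CO)$ to get $\bar f\in k[x]$ killing $\bar\alpha$ and splitting completely over $k'$ because $f$ splits over $K'$ and $\CO'$ contains all the roots. Separability: $k$ is perfect by hypothesis, so every algebraic extension of $k$ is separable. Hence $k'/k$ is normal and separable, i.e.\ Galois.

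Third, and this is the step I expect to be the main obstacle, the surjectivity of $\Gal(K'/K)\to\Gal(k'/k)$ when $\CO$ is absolutely unramified. The point is that absolute unramifiedness, $J(\CO)=p\CO$, together with $J(\CO)\subseteq J(\CO')$, forces $K'/K$ to be \emph{unramified} in the extension $\CO\subseteq\CO'$: the ramification index is $1$ because $p$ generates the maximal ideal on both sides. For a finite subextension this is the classical statement that an unramified extension of complete discrete valuation rings with separable (here: perfect) residue field extension has $[K'':K]=[k'':k]$ and the restriction map $\Gal(K''/K)\to\Gal(k''/k)$ is an isomorphism (see e.g.\ Serre, \emph{Local Fields}, Ch.~I); one reduces to this case by writing $k'/k$ as a union of finite subextensions $k''/k$, lifting each via Hensel's lemma to an unramified $\CO''\subseteq\CO'$ with residue field $k''$, and observing that any $\tau\in\Gal(k'/k)$ restricts compatibly to the $k''$ and lifts to a compatible system of automorphisms of the $K''$, hence (by the universal property of the unramified closure, or by taking a direct limit) to an automorphism of $K'$ over $K$ inducing $\tau$. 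The technical care needed is in handling the possibly infinite extension $K'/K$ — I would phrase the argument as a colimit over finite unramified subextensions, using that every finite subextension of an unramified extension is again unramified, and that Hensel's lemma produces the lifts with no obstruction. Once the finite case is in hand, the passage to the limit is routine.
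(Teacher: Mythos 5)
Your first two assertions are handled correctly, using an integral-closure characterisation of $\CO'$ rather than the paper's argument via uniqueness of extensions of absolute values; these are equivalent routes, both ultimately resting on the same uniqueness statement for extensions of complete valuations to algebraic extensions.

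The third step, however, contains a genuine error. You assert that $J(\CO)=p\CO$ together with $J(\CO)\subseteq J(\CO')$ forces the extension $\CO\subseteq\CO'$ to be unramified, because ``$p$ generates the maximal ideal on both sides.'' This is false: the hypothesis only gives $p\in J(\CO')$, and $\CO'$ may well be ramified over $\CO$. A concrete counterexample is $\CO=\Z_p$, $\CO'=\Z_p[\zeta_p]$: here $\CO$ is absolutely unramified, $J(\CO)\subseteq J(\CO')$, but $[K':K]=p-1$ and the residue field extension is trivial, so the extension is totally ramified. Because you believe $K'/K$ to be unramified, you conclude that $K'$ is the filtered union of the finite unramified lifts $K''$ of the finite subextensions of $k'/k$, so that the direct limit of the lifted automorphisms already lives on $K'$. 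In reality this limit only gives an automorphism of the \emph{maximal unramified subextension} $K_0=\mathrm{Frac}(W(k'))$, which is in general a proper subfield of $K'$. The missing step, which the paper supplies explicitly, is to extend from $K_0$ to $K'$; this is exactly where the normality of $K'/K$ (a hypothesis of the lemma you do not use in this part) is required. So you should drop the unramifiedness claim, carry out your Hensel/Witt-vector lifting to obtain a lift of $\tau$ on $K_0$, check that it fixes $K$, and then invoke normality of $K'/K$ to extend it to an element of $\Gal(K'/K)$.
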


\begin{proof}     
Let  $\pi  $  (respectively $\pi'$) be a  uniformiser of  $\CO $  
(respectively $\CO'$)  and let $a$  be  a real number  with $0 < a< 1$. 
Since  $\CO \subseteq \CO' $  and $J(\CO) \subseteq J(\CO') $, we have 
$\pi={\pi'}^e u $,  for some positive integer $e$  and some 
$u \in (\CO')^{\times} $. Let  $\nu:  K \to  {\mathbb  R} $   be  the   
absolute  value defined by $\nu ( x)  = a^{ei} $  if  $x  =\pi ^i    v$,  
$v \in \CO^{\times}$  and let $  \nu':  K' \to  {\mathbb R}$   be the 
absolute value defined by $\nu' (x)  = a^i $  if  $x  =(\pi ') ^i  v$,  
$v \in {\CO'}^{\times} $. Then  $\nu'  $, and consequently  
$\nu'\circ \sigma $ are extensions of $\nu $ to $K'$  for any 
$\sigma \in  \Gal(K'/K)  $. 
On the other hand, since   $K'$ is  an algebraic extension of $K$ and    
since  $\nu $ is complete, there is a unique extension  of $\nu $ to an 
absolute value on $K' $ (see \cite[Chapter 2, Theorem 4.8]{Neuk}).    
Thus $\nu'=  \nu'\circ \sigma  $  for all $\sigma \in  \Gal(K'/K)$. 
This proves the first assertion as  the valuation ring of  $\nu' $ is 
$\CO'$. It follows from the first assertion that  $\CO' $  is integral  
over $\CO $, and consequently that  $k'$ is  a normal extension of $k$.    
Further, since  any algebraic extension of a  perfect  field is perfect,  
$k'$ is perfect and  $k'/k  $ is separable, Hence $k'/k$ is  Galois  
as claimed.

Now suppose that  $\CO$ is absolutely unramified; that is, $\CO$ is the
ring of Witt vectors $W(k)$. Let $\CO_0= W(k') \subseteq  \CO'$ be the 
ring of Witt vectors of $k'$ in $\CO'$ and let $K_0 $ be the field of 
fractions of $\CO_0 $. Then $K_0$ is a normal extension of $K$; to see 
this it suffices to show that $K_0$ is $\Gal(K'/K)$-invariant, hence 
that $\CO_0$ is $\Gal(K'/K)$-invariant. This is obvious since $\CO_0$ 
is generated by $p$ and the canonical lift of $(k')^\times$ in 
$(\CO')^\times$, both of which are clearly $\Gal(K'/K)$-invariant.  

Applying the  first part of the lemma to the extension  
$(K_0, \CO_0, k')$  of $(K, \CO, k)$  we obtain, via restriction to   
$\CO_0 $, a homomorphism  from  $\Gal (K_0/K) \to \Gal(k'/k)$. 
This homomorphism is surjective. Indeed, by 
\cite[Chapter 2, Theorem 4]{SeLF}, any automorphism of $k'$ lifts
uniquely to an automorphism of $\CO_0$, and applying the same
theorem again shows that the unique lift of an automorphism of $k'$ 
which is the identity on $k$ is the identity on $\CO$. By 
the normality of $K'/K$  every element of $\Gal(K_0/K)$ extends to an 
element of $\Gal(K'/K)$, proving the result.
\end{proof}

\begin{lem} \label{lem:cyclotomic}   
Let $(K', {\CO}', k')$ be an extension  of  the $p$-modular system 
$(K,\CO,k)$ such that $K'$ is a normal extension of $K$ contained in 
$\bar K$. Suppose  that $\CO$ is absolutely unramified.

 \begin{enumerate}[(\rm a)]
\item  
Let $\zeta \in K'$ be a root of unity whose order is a power of $p$.  
Then $\Gal(K[\zeta]/K) \cong \Aut (\langle \zeta \rangle) \cong  
\Gal({\Q} [\zeta]/{\Q}) $.

\item    
Suppose that ${\mathbb Q}_n \subseteq K' $. Then $\CH_{n,k}$ is the 
image of the map $\Gal(K'/K) \to \CG_n$   induced by restriction to 
${\mathbb Q}_n $.
\end{enumerate}
\end{lem}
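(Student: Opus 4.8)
The plan is to prove (a) first and then use it, via the standard dichotomy between totally ramified and unramified extensions, as the ``$p$-part'' input for (b), the bulk of the work lying in the bookkeeping of (b). For (a), write $p^m$ for the order of $\zeta$ (we may assume $\zeta\neq 1$, the case $\zeta=1$ being trivial). Since $\CO$ is absolutely unramified we have $J(\CO)=p\CO$, so the translated cyclotomic polynomial $\Phi_{p^m}(x+1)$, which is Eisenstein at $p$ over $\Z_p$, remains Eisenstein over $\CO$; hence $\Phi_{p^m}$ is irreducible over $K$, so $[K[\zeta]:K]=\phi(p^m)=[\Q[\zeta]:\Q]$, and $K[\zeta]/K$ is moreover totally ramified. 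Both $K[\zeta]/K$ and $\Q[\zeta]/\Q$ are splitting fields of $x^{p^m}-1$, hence Galois, and restriction to $\langle\zeta\rangle$ gives injections $\Gal(K[\zeta]/K)\hookrightarrow\Aut(\langle\zeta\rangle)$ and $\Gal(\Q[\zeta]/\Q)\hookrightarrow\Aut(\langle\zeta\rangle)$; these are isomorphisms because $|\Aut(\langle\zeta\rangle)|=\phi(p^m)$.

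For (b), set $d=n_{p'}$ and $q=n_p$, and denote by $\mu_m$ the group of $m$-th roots of unity in $\bar K$. Comparing degrees ($\phi(n)=\phi(q)\phi(d)$) gives $\Q_q\cap\Q_d=\Q$, so $\CG_n=\Gal(\Q_q/\Q)\times\Gal(\Q_d/\Q)$. Inside $K'$ we have $K[\mu_q]$, totally ramified over $K$ of degree $\phi(q)$ by (a), and $K[\mu_d]$, unramified over $K$ with residue field $k[\bar\mu_d]$ (since $x^d-1$ is separable modulo $J(\CO)$); here reduction modulo $J(\CO')$ restricts to an isomorphism from $\mu_d$ onto the $d$-th roots of unity in $k'$, so $k[\bar\mu_d]$ is a subfield of $k'$, and the canonical isomorphism $\Gal(K[\mu_d]/K)\cong\Gal(k[\bar\mu_d]/k)$ from the theory of unramified extensions (cf. \cite[Ch.~2]{SeLF}) is compatible with the actions on $\mu_d$ and $\bar\mu_d$. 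A subextension of $K$ that is at once totally ramified and unramified equals $K$, so $K[\mu_q]$ and $K[\mu_d]$ are linearly disjoint over $K$, whence $\Gal(K[\mu_n]/K)=\Gal(K[\mu_q]/K)\times\Gal(K[\mu_d]/K)$. As $K'/K$ is Galois (normal of characteristic zero) and $\mu_n\subseteq\Q_n\subseteq K'$, restriction $\Gal(K'/K)\to\Gal(K[\mu_n]/K)$ is surjective. Composing with restriction to $\Q_n$, using (a) on the $q$-part (an isomorphism onto $\Gal(\Q_q/\Q)$) and injectivity of $\Gal(K[\mu_d]/K)\hookrightarrow\Gal(\Q_d/\Q)$ on the $d$-part, I get that the image of $\Gal(K'/K)$ in $\CG_n$ is $\Gal(\Q_q/\Q)\times H_0$, where $H_0\subseteq\Aut(\mu_d)=(\Z/d\Z)^\times$ is the image of $\Gal(k[\bar\mu_d]/k)$ acting on $\mu_d$.

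Finally I identify $\CH_{n,k}$ with $\Gal(\Q_q/\Q)\times H_0$. By the independence of $\CH_{n,k}$ from the chosen splitting field, I may take $k[\bar\mu_d]$ as the field ``$k'$'' in its definition; the defining condition then constrains only $\alpha|_{\mu_d}$ for $\alpha\in\CG_n$, requiring it to be the $p^u$-power map for some $u\geq 0$ for which the $p^u$-power map also lies in $H_0$. Since $k[\bar\mu_d]=k\cdot\F_p(\bar\mu_d)$ with $\F_p(\bar\mu_d)$ a finite field, $\Gal(k[\bar\mu_d]/k)$ is cyclic generated by a power of the Frobenius, so every element of $H_0$ is itself a $p^u$-power map; hence the simultaneous condition collapses to $\alpha|_{\mu_d}\in H_0$, and $\CH_{n,k}$ is exactly the preimage of $H_0$ under $\CG_n\to\Gal(\Q_d/\Q)$, namely $\Gal(\Q_q/\Q)\times H_0$, which matches the image computed above. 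I expect the main obstacle to be precisely this last matching: checking that the two separate existential quantifiers in the definition of $\CH_{n,k}$ (a $u$ for $\alpha$, a $\tau$ on the residue side) reduce to the single condition $\alpha|_{\mu_d}\in H_0$, which hinges on the Frobenius description of $\Gal(k[\bar\mu_d]/k)$ together with the compatibility of reduction with the Galois actions on roots of unity; keeping the $p$-part genuinely unconstrained is where absolute unramifiedness of $\CO$, through part (a), is essential.
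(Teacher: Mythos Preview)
Your proof is correct, and for part (a) it is essentially identical to the paper's (Eisenstein for $\Phi_{p^m}(x+1)$ over the absolutely unramified $\CO$). For part (b) you take a genuinely different, more structural route than the paper.

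The paper argues by two inclusions. For the inclusion of the image in $\CH_{n,k}$, it uses the map $\Gal(K'/K)\to\Gal(k'/k)$ from the preceding Lemma~\ref{lem:stablering}: any $\tau$ reduces to some $\bar\tau$ whose restriction to the finite field $\F_p[\bar\mu_d]$ is a Frobenius power, and the compatibility of reduction on $p'$-roots of unity then forces $\tau|_{\Q_n}\in\CH_{n,k}$. For the reverse inclusion, given $\alpha\in\CH_{n,k}$ the paper lifts the accompanying $\bar\tau\in\Gal(k'/k)$ to some $\tau\in\Gal(K'/K)$ (surjectivity in Lemma~\ref{lem:stablering}), checks that $\tau$ already agrees with $\alpha$ on $\mu_d$, and then corrects $\tau$ on the $p$-power roots by invoking part (a) over the intermediate unramified field $K_0=\mathrm{Frac}\,W(k')$ to find $\sigma$ with $\sigma\tau(\zeta_p)=\alpha(\zeta_p)$; the composite $\sigma'\tau$ is the desired preimage.

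You instead compute both sides once and for all as $\Gal(\Q_q/\Q)\times H_0$ inside $\CG_n=\Gal(\Q_q/\Q)\times\Gal(\Q_d/\Q)$, using the totally ramified/unramified dichotomy and linear disjointness of $K[\mu_q]$ and $K[\mu_d]$ over $K$. This packages the same ingredients more cleanly: the role of part (a) is exactly to make the $p$-part surject onto $\Gal(\Q_q/\Q)$, and the unramified theory replaces the explicit lifting of residue automorphisms. What the paper's approach buys is that it avoids the linear disjointness and product decomposition, staying closer to the definitions and using only the previous lemma; what your approach buys is a transparent description of $\CH_{n,k}$ itself as a direct product, and it sidesteps the slightly awkward appeal to part (a) over the auxiliary field $K_0$. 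Your final reduction of the two existential quantifiers in the definition of $\CH_{n,k}$ to the single condition $\alpha|_{\mu_d}\in H_0$, via the fact that $\Gal(k[\bar\mu_d]/k)$ is generated by a Frobenius power, is exactly the point the paper handles implicitly in its first inclusion.
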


\begin{proof}   
(a) Let $m\geq 1 $ and let $\Phi_{p^m}(x) \in {\Z}[x]$  denote 
the $p^m$-th cyclotomic polynomial. We have 
$$\Phi_{p^m}(x)  =  \frac{x^{p^m}-1}{x^{p^{m-1}}-1}  = 
\Phi_p (x^{p^{m-1}})   .$$
Set $f(x) = \Phi_{p^m}(x+1)$. Then,       
$$f(x) = \Phi_p ((x+1) ^{p^{m-1}}) \equiv  \Phi_p (x^{p^{m-1}} + 1)
   \  \  \mod   \   p{\Z}[x].  $$  
Note that $\Phi_p(x+1)=$ $\frac{(x+1)^p-1}{x}=$ 
$\sum_{i=1}^p \binom{p}{i} x^{i-1}$, so all but the leading 
coefficient of this polynomial are divisible by $p$. Upon
replacing $x$ by $x^{p^m}$, it follows in particular that
all intermediate coefficients of 
$$\Phi_p (x^{p^{m-1}} + 1) = 
\frac{(x^{p^{m-1}} + 1)^p-1}{x^{p^{m-1} }}$$
are divisible by $p$. Thus all intermediate coefficients of $f(x)$  
are divisible by $p$. Also, $f(x)$ is monic and has constant term 
$p$. Since $p$ is prime in $\CO$, it follows by Eisenstein's 
criterion  applied to $\CO$, that $\Phi_{p^m} (x)$ is irreducible 
in $\CO[x]$, and hence by Gauss's lemma  that $\Phi_{p^m}(x) $ is 
irreducible  in $K[x]$. This proves the first assertion. 

(b) We first show that the image of $\Gal(K'/K)$ in $\CG_n$ is contained 
in $\CH_{n,k} $. By  Lemma~\ref{lem:stablering}, restriction to $\CO$ 
induces a homomorphism $\Gal(K'/K) \to  \Gal(k'/k)$. Let 
$\tau \in \Gal(K'/K)  $  and denote  by $\bar \tau  $ the image of  
$\tau \in  \Gal(k'/k)$  under the above map.  The restriction   of 
$\bar \tau $ to    the  (finite) splitting field   of $ x^n -1 $ over 
${\mathbb F}_p$ is a power of the Frobenius map $ x \to x^p $.   
Since the canonical  surjection  $u \to \bar u$  from $\CO '$ to $k'$   
induces an isomorphism between the groups of $p'$-roots of unity of $K'$  
and of $k'$ and  since $\bar \tau  $  is the identity on $k$, it follows 
that the restriction of $\tau  $ to ${\mathbb Q}_n  $ is  an element of 
$\CH_{n,k} $.   

Next we show that $\CH_{n,k}   $ is contained  in  the image of 
$\Gal(K'/K)$ in $\CG_n$. Let  $\zeta\in\Q_n$ be a primitive  $n$-th 
root of unity. Write $\zeta= \zeta_p \zeta_{p'} $, where 
$\zeta_p, \zeta_{p'}$ are powers of $\zeta$, the order of $\zeta_p$ is 
a power of $p$, and the order of $\zeta_{p'}$ is prime to $p$.    
Let $\alpha \in \CH_{n,k} $.    We will  prove  that  there exists 
$\beta \in\Gal(K'/K) $ such that the restriction of $\beta $ to $\Q_n $  
equals $\alpha $.       By  Lemma~\ref{lem:stablering},   $k' $  is  a 
normal extension of $k$. Hence by the definition of  $\CH_{n,k} $, there 
exists   $\bar \tau \in \Gal(k'/k) $   and   a non-negative integer  $u$   
such that $\alpha (\delta) =$ $\delta^{p^u}$   for all $n_{p'}$-roots of 
unity $\delta$ in $\Q_n$  and $\bar \tau( \eta)=$ $\eta^{p^u}$  for all 
$n_{p'} $-roots of  unity  $\eta $ in $k'$. Again by 
Lemma ~\ref{lem:stablering}, $\bar \tau $ lifts to an automorphism 
$\tau \in \Gal(K'/K)  $.  By the isomorphism between the groups of 
$p'$-roots of unity in $K'$ and in $k $, we have that  
$\tau (\zeta_{p'} )=$ $\alpha(\zeta_{p'}) $. By part (a), there exists 
$\sigma\in$ $\Gal(K_0[\zeta_p]/K_0) $  such that 
$ \sigma (\tau( \zeta_p) ) =$ $\alpha (\zeta_p)$.  Let $\sigma'$ be any 
extension of $\sigma$ to $K' $ and set  $\beta:=$ $\sigma'\tau$.  Then 
$\beta $ has the required properties.
\end{proof}

Theorem~\ref{thm:intro-basic} forms the first part of the statement of 
the following result. For a finite group $G$ denote by $\IBr(G)$ the set 
of irreducible Brauer characters of $G$ interpreted as functions from 
the set of $p$-regular elements of $G$ to $\bar K$.  If $b$ is a central 
idempotent of $\CO G$, then we denote by $\IBr(G,b) $ the subset of 
$\IBr(G)$ consisting of the Brauer characters of simple $k'Gb $-modules
for  any sufficiently large field  $k'$ containing $k$.

\begin{thm} \label{thm:rational} 
Let $G, H$ be finite groups and let $n$ be large enough for $G$ and $H$.
Let $b$, $c$ be blocks of $\CO G$ and  $\CO H$ respectively.  Suppose 
that $\CO $ is  absolutely unramified. Any virtual Morita equivalence  
between $\CO Gb $ and $\CO Hc $ given by a virtual bimodule and its 
dual induces a bijection $I :  \Irr(\bar K Gb) \to \Irr(\bar K Hc)$ 
satisfying  $\,^{\sigma}I(\chi) = I(\,^{\sigma} \chi) $ for all 
$\sigma \in {\mathcal H}_{n,k} $ and all  $\chi \in Irr(\bar K Hc)$. 

If the virtual Morita equivalence is induced from a Morita equivalence,
then  in addition there exists a  bijection, $\bar I:  \IBr(G, b) \to$
$\IBr(H, c) $  such that for  all $\chi \in \Irr(G,b) $, 
$\varphi \in \IBr(G,b)$ and $\sigma \in {\mathcal H}_{n, k}$ the 
decomposition numbers  $d_{I(\chi),  \bar I (\varphi)}$ and 
$d_{\chi,   \varphi}$ are equal  and $\, ^{\sigma} \bar I (\varphi)=$
$\bar I (\,^{\sigma} \varphi ) $.
\end{thm}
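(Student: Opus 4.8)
The plan is to deduce Theorem~\ref{thm:rational} from Theorem~\ref{thm:intro-basic} together with Theorem~\ref{thm:introvirtual} and some bookkeeping on Galois twists and the decomposition map. The first assertion on ordinary characters is essentially Theorem~\ref{thm:intro-basic}: a virtual Morita equivalence between $\OGb$ and $\OHc$ given by a virtual bimodule $M$ and its dual yields, after extending scalars to $\bar K$, a bijection $I:\Irr(\bar KGb)\to\Irr(\bar KHc)$ with signs $\epsilon_\chi$ such that $\epsilon_\chi I(\chi)=\bar K M\cdot_{\bar K A}\chi$; by Lemma~\ref{lem:cyclotomic}(b), under the hypotheses that $\CO$ is absolutely unramified (so $\CO=W(k)$) and $n$ is large enough for $G$ and $H$, the group $\CH_{n,k}$ is exactly the image in $\CG_n$ of the restriction map from $\Gal(\bar K/K)$ (or an appropriate normal subextension containing $\Q_n$), so the commutation $\,^\sigma I(\chi)=I(\,^\sigma\chi)$ follows from Theorem~\ref{thm:intro-basic} verbatim. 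So that part requires only citing what is already proved.

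The substance is the second paragraph of the statement, about Brauer characters and decomposition numbers in the case of an actual Morita equivalence. First I would fix an indecomposable $(\OGb,\OHc)$-bimodule $M$ inducing the Morita equivalence, with $M^\vee$ inducing the inverse; since $\OGb$ and $\OHc$ are symmetric, $M$ and $M^\vee$ induce a virtual Morita equivalence, so the machinery of Theorem~\ref{thm:introvirtual} applies. A Morita equivalence sends simple modules to simple modules up to isomorphism, so reduction mod $J(\CO)$ and tensoring with a large enough $k'$ gives a Morita equivalence between $k'G\bar b$ and $k'H\bar c$, inducing a bijection $\bar I:\IBr(G,b)\to\IBr(H,c)$ defined by $\bar I([\bar S])=[\bar M\ten_{k'G\bar b}\bar S]$ for $\bar S$ simple. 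The key compatibility with decomposition numbers comes from the fact that the decomposition map is induced by reduction of a full $\CO$-lattice: if $\hat S$ is an $\OGb$-lattice with $\bar K\hat S$ affording $\chi$, then $M\ten_{\OGb}\hat S$ is an $\OHc$-lattice, $\bar K$-extension affords $\pm I(\chi)$ (up to sign, which does not affect decomposition numbers since these count composition multiplicities), and its reduction mod $J(\CO)$ is $\bar M\ten_{kG\bar b}(\text{reduction of }\hat S)$; comparing composition factors and using that $\bar M\ten-$ is exact and sends simples to simples yields $d_{I(\chi),\bar I(\varphi)}=d_{\chi,\varphi}$. For the Galois equivariance of $\bar I$ I would run the same argument that proves Theorem~\ref{thm:intro-basic} but on the level of $k'$-modules: $\bar M$ has a canonical form over $k=W(k)/J$, more precisely $\bar M$ is defined over $k$ since $M$ is defined over $\CO=W(k)$, so for $\sigma\in\CH_{n,k}$ lifting to $\tau\in\Gal(k'/k)$ one gets $\,^\tau(\bar M\ten\bar S)\cong\bar M\ten\,^\tau\bar S$, and translating the action of $\tau$ on $k'$-modules back to the action of $\sigma$ on $\IBr$ via the chosen identifications gives $\,^\sigma\bar I(\varphi)=\bar I(\,^\sigma\varphi)$.

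The main obstacle, and the step I would spend the most care on, is keeping the three parallel ``Galois actions'' coherently identified: the action of $\CH_{n,k}\subseteq\CG_n$ on $\Irr(\bar KGb)$ via the cyclotomic quotient of $\Aut(\bar K)$, the action on $\IBr(G,b)$ via $\Gal(k'/k)$ and the identification $\IBr\leftrightarrow$ simple $k'$-modules, and the compatibility of both with the decomposition map $d_{\chi,\varphi}$. One has to check that the element $\sigma\in\CH_{n,k}$ and its partner $\tau\in\Gal(k'/k)$ from the definition of $\CH_{n,k}$ act ``in the same way'' on the two sides, i.e. that the square relating $d$, the $\sigma$-action upstairs and the $\tau$-action downstairs commutes — this is where the precise shape of the definition of $\CH_{n,k}$ (same $p$-power $p^u$ on $n_{p'}$-roots of unity upstairs and downstairs) is used, and is exactly the point that forces $\CO$ to be absolutely unramified so that the restriction map $\Gal(\bar K/K)\to\Gal(k'/k)$ behaves as in Lemma~\ref{lem:stablering}. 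Once that identification is pinned down, the rest is the standard observation that Morita equivalences commute with scalar extension, reduction mod $J(\CO)$, Galois twists, and preserve composition series, so the decomposition-number and equivariance statements fall out simultaneously.
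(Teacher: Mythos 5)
Your proposal is correct and follows essentially the same route as the paper: extend scalars to a suitable normal extension $K'$ containing enough roots of unity, apply Theorem~\ref{thm:introvirtual} together with Lemma~\ref{lem:cyclotomic}(b) to identify $\CH_{n,k}$ with the image of $\Gal(K'/K)$, define $\bar I$ via the reduced bimodule over $k'$, and use that this bimodule is an extension of scalars from $k$ (hence Galois-stable) for both the decomposition-number identity and the equivariance of $\bar I$. The only caveat is that you should not phrase the first assertion as a citation of Theorem~\ref{thm:intro-basic}, since that theorem is precisely the part of the present statement being proved; the non-circular derivation you then give is the right one.
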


\begin{proof}    
Let $(K',\CO', k')$ be  an extension of  the $p$-modular system
$(K,\CO,k)$ such that $K'\subseteq$ $\bar K$ and such that the 
extension $K'/K$ is normal. Suppose that  $k'$ is perfect, and that 
$K'$  contains primitive  $n$-th, $|G|$-th and $|H|$-th roots of 
unity.  We may  and will  identify $\Irr(\bar KG) $ and $\Irr(\bar KH)$ 
with  $\Irr (K'G)$ and $\Irr(K'H)$ respectively.  By 
Lemma~\ref{lem:cyclotomic}, the subgroup $\CH_{n,k} $ is the image of 
the restriction map from $\Gal(K'/K)$ to $\CG_n$. It follows from 
\ref{thm:introvirtual} that a virtual Morita equivalence between
$\CO Gb$ and $\CO Hc$ given by a virtual bimodule and its dual yields
a character bijection $I$ which commutes with $\Gal(K'/K)$, hence with
$\CH_{n,k}$. By \cite[1.2]{Broue90}, the bijection $I$, together with 
the signs from \ref{signsLemma}  is a perfect isometry. 
  
Now suppose that $X$ is a $(\CO Hc, \CO Gb)$-bimodule finitely 
generated and  projective as left $\CO Hc $-module and  as right 
$\CO Gb$-module such that $X\otimes_{\CO Gb}-$ induces a Morita  
equivalence  between $\CO Gb $ and $\CO Hc $. Then $X':=\CO' \tenO X$ 
induces a Morita equivalence between $\CO'Gb $ and $\CO' Hc$ and 
$\bar X := k\otimes_{\CO} X $ and $\bar X':=  k'\otimes_{k} \bar X$ 
induce  Morita equivalences between $kGb$ and $kHc $ and between $k'Gb$ 
and $k'Hc$ respectively. 

Since $k'$ contains  enough roots of unity, we may identify $\IBr(G,b) $  
(respectively $\IBr(H,c) $) with the Brauer characters  of simple 
$k'Gb $-modules (respectively $k'Hc$-modules). For a simple 
$k'Gb$-module (or $k'Hc$-module)  $S$, denote by $\varphi_S$ the  
corresponding Brauer character. Let $\bar I: \IBr(G, b) \to$
$\IBr( H, c)$ be the bijection induced by $\bar X'$, that is such 
that $\bar I (\varphi_S)  =$ $\varphi_{\bar  X' \otimes _k' S}$ for any  
simple $k'Gb$-module  $S$.  Since $\bar X' \cong$
$k'\otimes_{{\CO}'} X' $, we have that 
$d_{I(\chi), \bar I (\varphi) }= d_{\chi, \varphi }$ for all 
$\chi \in \Irr_{K'}(G, b)$, $\varphi \in IBr(b)$.

Let $\sigma \in {\mathcal H}_{n,k}$. By the previous lemma, there 
exists $\tau  \in    \Gal(K'/K)$  such that the restriction to 
${\mathbb Q}_n $    is $\sigma $. Let $\bar \tau  \in \Gal(k'/k)$ be 
the image of $\tau $.    If  $S$ is any  simple $k'Gb$-module 
(respectively $k'Hc $-module),  then $\, ^{\sigma} (\varphi_S) =$
$\varphi_{\,^{\bar \tau} S}$. Thus, it suffices to show that 
$\, ^{\bar\tau}(\bar X' \otimes_{k'}S) \cong$
$\bar X' \otimes_{k'} \,^{\bar \tau} S $ for any simple $k'Gb$-module 
$S$.
Now $$\,^{\bar \tau} X'=\,^{\bar \tau }(k'\otimes_k X)\cong k'\otimes X$$ 
as $(k'Hc, k'Gb)$-bimodule and it follows that for any simple 
$k'Gb $-module $S$  that 
$$ \,^{\bar\tau} (X'  \otimes_{k'Hc} S) \cong   
\,^{\bar\tau} X' \otimes_{k'Hc} \,^{\bar\tau} S  \cong  
X' \otimes_{k'Hc}  \,^{\bar\tau}  S  $$ 
as $k'Hc$-modules.  This proves the result.
\end{proof}

\begin{proof} [{Proof of Corollary~\ref{cor:intro-rational}}]  
Let  $n$  be a common multiple of $|G|$ and of $|H|$. For any $g \in G$  
and $\chi \in \Irr(\bar K G)$, $\chi(g) \in \Q_n $.  By the basic theory  
of cyclotomic extensions  of $\Q$,   $\chi $ is $p$-rational if and only 
if $\, ^{\sigma} \chi =\chi $ for all  $\sigma \in \CG_n$ such that 
$\,^\sigma  (\eta)  =\eta$ for all $n_{p'}$-roots of unity $\eta \in \Q_n$ 
and  similarly  for the characters of $H$.  
On the other hand, if $\sigma \in \CG_n$ is such that 
$\,^\sigma  (\eta) =\eta $ for all $n_{p'}$-roots of unity $\eta \in \Q_n$, 
then $\sigma \in \CH_{n,k} $. The result is now immediate from 
Theorem~\ref{thm:rational}.
\end{proof}

\begin{proof} [{Proof of Corollary~\ref{cor:intro-virtual}}]   
The  action of $\CH_n$ on $\Irr(\bar K G)$ induces an action of 
$\CH_n $  on the set of blocks of $\CO G$.   Since $\CO$ is a minimal 
complete discrete valuation ring for $b$,   $k$  is  a finite field  
and consequently  a splitting field of $x^{n_{p'}}-1 $ over $k $ is also 
finite.   Let $ |k|= p^d $. Then  $\CH_{n,k} $   consists of  precisely 
those elements $\alpha $ of $\CG_n $  for which there exists a 
non-negative integer $u$ such that  $\sigma(\delta)=$ $\delta^{p^{ud}}$  
for all $n_{p'} $-roots of unity  in ${\mathbb Q}_n$. It follows that 
$\CH_{n,k}$ is the stabiliser of  $b$  in $\CH_n$  and similarly for 
$H$ and $c$. The result is now immediate  from Theorem~\ref{thm:rational}.
\end{proof}
`
\section{Descent  for  equivalences} \label{descentequiv-Section}

Let $\CO$, $\CO'$ be complete local commutative principal ideal  
domains such that
$\CO\subseteq$ $\CO'$ and $J(\CO)\subseteq$ $\CO'$ (so that either
$\CO$, $\CO'$ are complete discrete valuation rings or they are fields, 
allowing the possibility that $\CO$ is a field but $\CO'$ is not). Let 
$A$ be an $\CO$-algebra which is finitely generated as an $\CO$-module.
Then $A$ is in particular noetherian, and hence the category of
finitely generated $A$-modules $\mod$-$A$ is abelian and coincides with 
the category of finitely presented $A$-modules. We set $A'=$ 
$\CO'\tenO A$.
For any $A$-module $U$ we denote by $U'$ the $A'$-module $\CO'\tenO U$,
and for any homomorphism of $A$-modules $f : U\to V$, we denote
by $f'$ the homomorphism of $A'$-modules $\Id_{\CO'}\ten f : U'\to$
$V'$. We extend this notion in the obvious way to complexes; that is,
if $X=$ $(X_n)_{n\in\Z}$ is a complex of $A$-modules with differential 
$\delta=$ $(\delta_n)_{n\in\Z}$, then we denote by $X'$ the complex of 
$A'$-modules $(X'_n)_{n\in\Z}$ with differential $\delta'=$ 
$(\delta'_n)_{n\in\Z}$.

The following lemmas are adaptations to the situation 
considered in this paper of well-known results which hold in greater 
generality; see the references given. We do not require the ring $\CO'$ 
to be finitely generated as an $\CO$-module.

\begin{lem} \label{Offlat}
The ring extension $\CO\subseteq$ $\CO'$ is faithfully flat; that is,
the functor $\CO'\tenO - $ is exact and sends any nonzero $\CO$-module 
to a nonzero $\CO'$-module.
\end{lem}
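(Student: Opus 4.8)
The plan is to verify the two assertions of Lemma~\ref{Offlat} separately, using only the structure of $\CO$ and $\CO'$ as complete local commutative principal ideal domains with $\CO\subseteq\CO'$ and $J(\CO)\subseteq\CO'$ (meaning $J(\CO)\subseteq J(\CO')$). First I would address flatness. Since $\CO$ is a principal ideal domain, a module is flat if and only if it is torsion-free; and $\CO'$, being a domain containing $\CO$, is certainly torsion-free as an $\CO$-module. Hence $\CO'\tenO-$ is exact. Alternatively, one can argue locally: flatness can be checked on the single maximal ideal, and $J(\CO)=\pi\CO$ for a uniformiser $\pi$ (or $\CO$ is a field, in which case everything is trivially flat), so one only needs $\mathrm{Tor}_1^{\CO}(\CO/\pi\CO,\CO')=0$, which follows since multiplication by $\pi$ on $\CO'$ is injective ($\CO'$ is a domain and $\pi\neq 0$ in $\CO'$ because $\pi\in J(\CO)\subseteq J(\CO')$, and $J(\CO')\neq\CO'$).

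Next I would establish faithfulness, i.e.\ that $\CO'\tenO M\neq 0$ whenever $M\neq 0$ is an $\CO$-module. The cleanest route is the standard criterion: a flat ring map is faithfully flat as soon as every maximal ideal of the source is the contraction of an ideal of the target, equivalently $\mathfrak{m}\CO'\neq\CO'$ for the maximal ideal $\mathfrak{m}=J(\CO)$. Here $J(\CO)\CO'\subseteq J(\CO')\subsetneq\CO'$, so this holds. To make the excerpt self-contained I would spell out the elementary consequence: if $M\neq 0$, pick $0\neq x\in M$; then $\CO x\cong \CO/I$ for some proper ideal $I\subseteq J(\CO)$, and by flatness $\CO'\tenO \CO x$ injects into $\CO'\tenO M$, so it suffices to see $\CO'/I\CO'\neq 0$; but $I\CO'\subseteq J(\CO)\CO'\subseteq J(\CO')\subsetneq \CO'$, so $\CO'/I\CO'\neq 0$. (When $\CO$ is a field this is immediate since $I=0$.)

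I do not expect a serious obstacle here; the only point requiring care is the hypothesis $J(\CO)\subseteq\CO'$, which must be read as $J(\CO)\subseteq J(\CO')$ — without it faithfulness can fail (e.g.\ a localisation). I would therefore note explicitly at the start that $J(\CO)\subseteq J(\CO')$, so that $\pi$ lies in the maximal ideal of $\CO'$ and in particular $\pi\neq 0$ in $\CO'$ and $\pi\CO'\neq\CO'$; both the flatness computation and the faithfulness argument hinge on exactly this. Finally, I would remark that this lemma is a special case of \cite[Tag 00HD]{stacks} or the treatment in Bourbaki, and that since both $\CO$ and $\CO'$ are principal ideal domains (or fields) the general machinery collapses to the torsion-free criterion, so the proof can be kept to a few lines.

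\begin{proof}
Recall that $J(\CO)\subseteq J(\CO')$, and write $J(\CO)=\pi\CO$ for a uniformiser $\pi$ of $\CO$ (if $\CO$ is a field the statement is trivial, so assume it is a discrete valuation ring). Since $\CO'$ is a domain containing $\CO$, it is torsion-free as an $\CO$-module, hence flat over the principal ideal domain $\CO$; equivalently, multiplication by $\pi$ on $\CO'$ is injective, because $\pi\in J(\CO')$ and $\CO'$ is a domain, so $\mathrm{Tor}_1^\CO(\CO/\pi\CO,\CO')=0$ and $\CO'\tenO-$ is exact.

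For faithfulness, let $M$ be a nonzero $\CO$-module and pick $x\in M$ with $x\neq 0$. Then $\CO x\cong \CO/I$ for some proper ideal $I\subseteq J(\CO)$, and by flatness the inclusion $\CO x\hookrightarrow M$ yields an injection $\CO'\tenO\CO x\hookrightarrow \CO'\tenO M$. Now $\CO'\tenO\CO x\cong \CO'/I\CO'$, and $I\CO'\subseteq J(\CO)\CO'\subseteq J(\CO')\subsetneq\CO'$, so $\CO'/I\CO'\neq 0$. Hence $\CO'\tenO M\neq 0$.
\end{proof}
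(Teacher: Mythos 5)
Your proof is correct and follows essentially the same route as the paper: torsion-freeness over the PID $\CO$ gives flatness, and the inclusion $J(\CO)\CO'\subseteq J(\CO')\subsetneq\CO'$ gives faithfulness. The paper simply cites Lam's \emph{Lectures on Modules and Rings} for both steps, whereas you spell out the arguments (and correctly observe that the ambient hypothesis ``$J(\CO)\subseteq\CO'$'' must be read as $J(\CO)\subseteq J(\CO')$, which is exactly what the faithfulness step uses).
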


\begin{proof}
Since $\CO'$ is torsion free as an $\CO$-module, it follows from 
\cite[(4.69)]{Lam} that $\CO'$ is flat as an $\CO$-module. 
Since $J(\CO)=$ $\CO\cap J(\CO')$ is the unique prime ideal in
$\CO$, it follows from \cite[(4.74)]{Lam} or also \cite[(4.71)]{Lam}
that $\CO'$ is faithfully flat as an $\CO$-module.
\end{proof}

\begin{lem}\label{fflat}    
Let $A$ be an $\CO$-algebra which is finitely generated as an 
$\CO$-module. Then for any sequence 
$\xymatrix{M\ar[r]^{f} & N\ar[r]^{g} & U}$
of $A$-modules, 
the sequence is exact at $N$ if and only if the sequence
$\xymatrix{M'\ar[r]^{f'} & N'\ar[r]^{g'} & U'}$
of $A'$-modules is exact at $N'$.
\end{lem}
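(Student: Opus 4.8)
The plan is to deduce this from faithful flatness (Lemma~\ref{Offlat}) together with the behaviour of homology under the exact functor $\CO'\tenO-$. First I would note that the composite $g'\circ f'=$ $(g\circ f)'$, so if the original sequence is exact at $N$, then in particular $g\circ f=0$, hence $g'\circ f'=0$ and the image of $f'$ is contained in the kernel of $g'$; the real content in both directions is the comparison of $\Ker(g)/\Im(f)$ with $\Ker(g')/\Im(f')$.

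The key step is to identify the homology module $H=$ $\Ker(g)/\Im(f)$ of the original sequence at $N$ with the homology module $H'=$ $\Ker(g')/\Im(f')$ of the new sequence at $N'$, after applying $\CO'\tenO-$. Since $A$ is finitely generated over the noetherian ring $\CO$, all the modules involved are finitely generated, and $\mod$-$A$ is abelian; write the two-term homology computation as follows. The kernel of $g$ fits into the exact sequence $\xymatrix{0\ar[r] & \Ker(g)\ar[r] & N\ar[r]^{g} & U}$, and applying the exact functor $\CO'\tenO-$ (Lemma~\ref{Offlat}) gives an exact sequence $\xymatrix{0\ar[r] & \CO'\tenO\Ker(g)\ar[r] & N'\ar[r]^{g'} & U'}$, so the natural map $\CO'\tenO\Ker(g)\to$ $\Ker(g')$ is an isomorphism. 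Similarly, applying $\CO'\tenO-$ to $\xymatrix{M\ar[r]^{f} & N\ar[r] & \coker(f)\ar[r] & 0}$ identifies $\CO'\tenO\coker(f)$ with $\coker(f')$, and hence $\CO'\tenO\Im(f)$ with $\Im(f')$ (using that $\CO'\tenO-$ preserves the factorisation of $f$ through its image, i.e. that it sends the short exact sequence $\xymatrix{0\ar[r] & \Im(f)\ar[r] & N\ar[r] & \coker(f)\ar[r] & 0}$ to a short exact sequence). Combining these, applying $\CO'\tenO-$ to $\xymatrix{0\ar[r] & \Im(f)\ar[r] & \Ker(g)\ar[r] & H\ar[r] & 0}$ yields a short exact sequence exhibiting $H'\cong$ $\CO'\tenO H$.

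With that identification in hand, the equivalence is immediate: if the original sequence is exact at $N$, then $H=0$, so $H'\cong$ $\CO'\tenO H=0$ and the new sequence is exact at $N'$; conversely, if the new sequence is exact at $N'$, then $\CO'\tenO H=$ $H'=0$, and the faithfulness part of Lemma~\ref{Offlat} (a nonzero $\CO$-module stays nonzero after $\CO'\tenO-$) forces $H=0$, so the original sequence is exact at $N$. I do not expect any serious obstacle here; the only point requiring a little care is the bookkeeping that $\CO'\tenO-$ commutes with forming kernels, images and cokernels of the maps in question, which is exactly what exactness of the functor delivers, and the observation that all modules are finitely generated so that everything stays inside the abelian category $\mod$-$A$ where these constructions behave well. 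One could alternatively phrase the whole argument by tensoring the doubly infinite complex and invoking that an exact functor commutes with homology, but the two-term version above is the most transparent.
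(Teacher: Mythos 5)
Your argument is correct and is essentially the paper's proof written out in full: the paper simply cites the standard fact that a faithfully flat extension preserves and reflects exactness (Lam, (4.70)), together with Lemma~\ref{Offlat}, and your homology comparison $H'\cong\CO'\tenO H$ is exactly the proof of that fact. The only microscopic point worth noting is that in the converse direction one should first deduce $g\circ f=0$ from $(g\circ f)'=0$ via faithfulness before forming the homology quotient, but this is immediate from the same lemma.
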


\begin{proof} 
This follows from Lemma \ref{Offlat} and \cite[(4.70)]{Lam}.
\end{proof}

Recall that a morphism   $\alpha :  X \to Y $ in a category  
${\mathcal C}$  is  {\it split} if there exists a morphism 
$\beta :  Y \to X $  such that $\alpha\beta\alpha =\alpha $.
     
\begin{lem}\label{split} 
Let $A$ be an $\CO$-algebra which is finitely generated as an
$\CO$-module. A morphism $f: M \to N$ in $\mod$-$A$ is split if and 
only if the morphism $f': M'\to N'$ in $\mod$-$A'$ is split.
\end{lem}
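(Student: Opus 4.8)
The plan is to deduce the statement from faithful flatness, using the fact that being split is an equational condition that can be detected after base change. First I would recall that $f : M\to N$ is split in $\modh A$ if and only if there exists $g : N\to M$ with $fgf = f$, i.e.\ if and only if the $A$-linear map
$$\Phi : \Hom_A(N,M)\to \Hom_A(M,N),\qquad g\mapsto fgf$$
has $f$ in its image. Since $A$ is finitely generated as an $\CO$-module and $M$, $N$ are finitely generated $A$-modules, the $\CO$-modules $\Hom_A(M,N)$ and $\Hom_A(N,M)$ are finitely generated, and $\Phi$ is an $\CO$-linear map between finitely generated $\CO$-modules. The forward implication is trivial: if $fgf = f$ then $f'g'f' = f'$, so $f'$ is split. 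The content is the converse.

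For the converse, the key point is the natural isomorphism $\CO'\tenO \Hom_A(M,N)\cong \Hom_{A'}(M',N')$, which holds because $M$ is finitely presented as an $A$-module (indeed $A$ is noetherian by the discussion preceding Lemma~\ref{Offlat}) and $\CO'$ is flat over $\CO$ by Lemma~\ref{Offlat}; the standard argument is to take a finite presentation $A^a\to A^b\to M\to 0$, apply $\Hom_A(-,N)$ and $\CO'\tenO-$, and compare with $\Hom_{A'}(-,N')$ applied to the base-changed presentation, using the five lemma. Under this identification, $\Id_{\CO'}\ten\Phi$ corresponds to the analogous map $\Phi' : \Hom_{A'}(N',M')\to\Hom_{A'}(M',N')$, $h\mapsto f'hf'$, and the element $1\ten f$ corresponds to $f'$. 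So the hypothesis that $f'$ is split says precisely that $f' = 1\ten f$ lies in the image of $\Id_{\CO'}\ten\Phi$.

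Now I would invoke faithful flatness to descend this membership. Let $Q = \coker(\Phi)$, a finitely generated $\CO$-module, and let $\bar f\in Q$ be the image of $f$. Applying the right exact functor $\CO'\tenO-$ to $\Hom_A(M,N)\to Q\to 0$ shows $\CO'\tenO Q\cong\coker(\Id_{\CO'}\ten\Phi)$, and the image of $\bar f$ in $\CO'\tenO Q$ is the class of $f' = 1\ten f$, which is zero because $f'$ is in the image of $\Phi'$. Thus $1\ten\bar f = 0$ in $\CO'\tenO Q$. Consider the $\CO$-submodule $\CO\bar f\subseteq Q$; applying the flat functor $\CO'\tenO-$ to the inclusion $\CO\bar f\hookrightarrow Q$ gives an injection $\CO'\tenO(\CO\bar f)\hookrightarrow \CO'\tenO Q$, and the image of the generator $1\ten\bar f$ is zero, so $\CO'\tenO(\CO\bar f) = 0$. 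By the faithfulness part of Lemma~\ref{Offlat}, $\CO\bar f = 0$, i.e.\ $\bar f = 0$ in $Q$, which means $f$ lies in the image of $\Phi$, so $f$ is split. I expect the main obstacle to be the bookkeeping in establishing the base-change isomorphism $\CO'\tenO\Hom_A(M,N)\cong\Hom_{A'}(M',N')$ cleanly — it needs finite presentation of $M$ as an $A$-module and flatness of $\CO'$ over $\CO$, both of which are available here — after which the descent of membership in a cokernel via faithful flatness is routine. (Alternatively one can bypass the cokernel and argue directly: the set of $\CO$-linear maps hitting $f$ need not be an $\CO$-submodule, but working with the cokernel as above sidesteps this and is the cleanest route.)
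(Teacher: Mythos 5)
Your proof is correct. It differs from the paper's in the reduction step, though the two key external ingredients are the same: the base-change isomorphism $\CO'\tenO\Hom_A(M,N)\cong\Hom_{A'}(M',N')$ (the paper cites Nagao--Tsushima I.11.7 for this) and faithful flatness of $\CO\subseteq\CO'$ from Lemma~\ref{Offlat}. The paper first observes that $f$ is split if and only if the two epimorphisms $M\to\Im(f)$ and $N\to\coker(f)$ are split, notes that image and cokernel commute with the flat base change, and then descends the \emph{surjectivity} of the composition map $\Hom_A(N,M)\to\Hom_A(N,N)$, using that surjectivity of a map of $\CO$-modules is detected after faithfully flat base change. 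You instead work directly with the defining equation $fgf=f$, i.e.\ with membership of $f$ in the image of the $\CO$-linear map $\Phi:g\mapsto fgf$, and descend that membership by passing to $\coker(\Phi)$ and using that $Q\to\CO'\tenO Q$ is injective for faithfully flat extensions. Your route avoids the image/cokernel factorisation entirely and is arguably more self-contained; the paper's route trades the cokernel bookkeeping for the (easy but unproved) equivalence of splitness with the splitness of the two associated epimorphisms. One cosmetic point: $\Phi$ is only $\CO$-linear (or $Z(A)$-linear), not $A$-linear as you first write, but you only ever use $\CO$-linearity, so nothing breaks.
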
  

\begin{proof}   
One checks easily that $f$ is split if and only if the two
epimorphisms $M\to$ $\Im(f)$ and $N\to$ $\coker(f)$ are split.
 By Lemma \ref{Offlat}, the extension $\CO\subseteq$ $\CO'$ 
 is faithfully flat, and hence $\Im(f')=$ $\Im(f)'$ and
 $\coker(f')=$ $\coker(f)'$. 
Thus it suffices to show that $f$ is a split epimorphism if and
only if $f'$ is a split epimorphism. 
Now $f$ is a split epimorphism if and only of the map
$\Hom_A(N,M)\to$ $\Hom_A(N,N)$ induced by composing with $f$ is 
surjective (since in that case an inverse image of $\Id_N$ under
this map is a section of $f$). Again since the
extension $\CO\subseteq$ $\CO'$ is faithfully flat, it follows that
$f$ is a split epimorphism if and only if the induced map
$\CO'\tenO \Hom_A(N,M)\to$ $\CO'\tenO \Hom_A(N,N)$ is surjective.
Since $M$, $N$ are finitely generated, hence finitely presented
by the assumptions on $A$, it follows from \cite[Theorem I.11.7]{NaTs}
(applied with $\CO'$ instead of $B$) that there is a canonical 
isomorphism $\CO'\tenO \Hom_A(N,M)\cong$ $\Hom_{A'}(N',M')$ and a 
similar isomorphism with $N$ instead of $M$.
Thus the surjectivity of the previous map is equivalent to 
the surjectivity of the map $\Hom_{A'}(N',M')\to$ $\Hom_{A'}(N',N')$
induced by composing with $f'$. This is, in turn, equivalent to
asserting that $f'$ is a split epimorphism, whence the result.
\end{proof}

\begin{lem} \label{contract}  
Let $A$ be an $\CO$-algebra which is finitely generated as an 
$\CO$-module, let $X$ be a complex of finitely generated $A$-modules  
and let $M$ be a finitely generated $A$-module. Then   

\begin{enumerate} [\rm(a)] 

\item  
$M$ is projective  if and only if $M'$ is  projective as $A'$-module.

\item 
$X$ is acyclic if and only if $X'$ is acyclic as complex of 
$A'$-modules.

\item   
$X$ is contractible if and only if $X'$ is contractible as complex of 
$A'$-modules. 
\end{enumerate} 
\end{lem}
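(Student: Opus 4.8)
The plan is to deduce all three statements from faithful flatness (Lemma \ref{Offlat}) together with the lemmas already established, handling each part in turn and using earlier parts where convenient.

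For part (a), I would argue as follows. If $M$ is projective, then $M$ is a direct summand of a finitely generated free $A$-module $F$, say $F\cong$ $M\oplus N$; applying $\CO'\tenO-$ gives $F'\cong$ $M'\oplus N'$ with $F'$ free of finite rank over $A'$, so $M'$ is projective. Conversely, suppose $M'$ is projective as $A'$-module. Choose a surjection $\pi : F\to$ $M$ with $F$ a finitely generated free $A$-module; this exists because $M$ is finitely generated. Then $\pi' : F'\to$ $M'$ is a surjection of $A'$-modules with $F'$ free, and since $M'$ is projective, $\pi'$ is a split epimorphism. By Lemma \ref{split}, $\pi$ is itself a split epimorphism, so $M$ is a direct summand of $F$ and hence projective.

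For part (b), acyclicity of $X$ means exactness of $X$ at every term $X_n$, i.e.\ exactness of each length-two subsequence $X_{n+1}\to$ $X_n\to$ $X_{n-1}$ at $X_n$. By Lemma \ref{fflat} this holds if and only if the corresponding sequence of $A'$-modules is exact at $X'_n$ for every $n$, which is precisely acyclicity of $X'$. For part (c), note first that a complex $X$ is contractible if and only if the identity morphism $\id_X$ is null-homotopic, equivalently if and only if each differential $\delta_n : X_n\to$ $X_{n-1}$ splits in $\mod$-$A$ compatibly; more usefully, $X$ is contractible if and only if $X$ is acyclic and each cycle module $Z_n(X)=$ $\Ker(\delta_n)$ is a direct summand of $X_n$ (equivalently, each $\delta_n$ factors as a split epimorphism onto $Z_{n-1}(X)$ followed by a split monomorphism). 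Since $X$ lives in $\mod$-$A$ with $A$ noetherian, the cycles, boundaries and cokernels of differentials are all finitely generated, so Lemma \ref{split} applies to the differentials. Combined with part (b) and the fact (faithful flatness again) that $Z_n(X)' =$ $Z_n(X')$ and $\coker(\delta_n)' =$ $\coker(\delta'_n)$, one gets that $X$ is contractible if and only if $X'$ is. Alternatively, and perhaps more cleanly, I would phrase contractibility of $X$ as: the morphism $0\to X$ is split in the additive category of complexes, or use that $X$ contractible $\iff$ $X$ acyclic with all differentials split; either way the key input is Lemma \ref{split} applied termwise together with Lemma \ref{fflat}.

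The main obstacle is the ``only if'' direction of part (c): contractibility is not a termwise property in an obvious way, so I need a characterization of contractibility that is checkable term-by-term using only splitness of maps and exactness — namely ``acyclic plus all differentials (equivalently, all inclusions $Z_n(X)\hookrightarrow X_n$) split''. Verifying that this characterization is correct, and that the passage to $X'$ commutes with forming kernels and images (which is exactly faithful flatness, Lemma \ref{Offlat}), is the crux; once that is in place the equivalence follows formally from Lemmas \ref{split} and \ref{fflat}. A minor point to be careful about is finite generation of the cycle modules, which is guaranteed since $A$ is noetherian and $X$ consists of finitely generated modules, so that Lemma \ref{split} is genuinely applicable.
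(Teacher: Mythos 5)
Your proposal is correct and follows essentially the same route as the paper: deduce (a) and (b) from Lemma \ref{fflat} and Lemma \ref{split}, and deduce (c) by invoking the standard characterization (Weibel, \S 1.4) that a complex is contractible if and only if it is acyclic with split differentials, then apply those same two lemmas degreewise. The ``main obstacle'' you flag is precisely what the Weibel citation handles, and your remark that noetherianness of $A$ ensures cycle modules are finitely generated, so that Lemma \ref{split} genuinely applies, is a correct and worthwhile observation that the paper leaves implicit.
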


\begin{proof}    
Part (a) and part (b) follow  from Lemma \ref{fflat} and Lemma 
\ref{split}. By \cite[\S 1.4]{Weibel}, a complex of $A$-modules 
(respectively $A'$-modules) is contractible if and only if the complex 
is acyclic and the differential in each degree is split. Therefore 
part (c) also follows from  Lemma~\ref{fflat} and Lemma~\ref{split}.
\end{proof}

Let $A$ and $B$ be symmetric $\CO$-algebras. 
Let $M$ be a finitely generated  $(A,B)$-bimodule 
which is projective as left $A$-module and  as right $B$-module. 
If $M\tenB - :\modh B\to$ $\modh A$ is an equivalence, then the 
symmetry of $A$ and $B$ implies that an inverse of this 
equivalence is induced by tensoring with the $\CO$-dual 
$M^\vee$ of $M$; that is, $M\otimes_B M^\vee \cong A$ as 
$(A, A)$-bimodules and  $ M^\vee \otimes_A M\cong B $ as 
$(B, B)$-bimodules.  Following Brou\'e, we say that $M$ induces a {\it 
stable equivalence  of Morita type} if there exist a projective 
$(A, A)$-bimodule $U$ and a projective $(B, B)$-bimodule $V$ such that 
$M \otimes_B M^\vee \cong A \oplus U$ as $(A, A)$-bimodules  
and $ M^\vee \otimes_A M\cong B  \oplus V  $ as $(B, B)$-bimodules. 
Let $X$ be a bounded complex of  finitely generated  $(A,B)$-bimodules 
which are projective as left $A$-modules   and  as right $B$-modules,  
and let $X^\vee=\Hom_{\CO} (X, \CO)$ be the dual complex. We say that 
$X$ induces a {\it Rickard  equivalence} and that $X$ is a {\it Rickard 
complex} if  there exist a contractible complex of $(A, A)$-bimodules 
$Y$ and a contractible  complex of $(B, B)$-bimodules $Z$ such that 
$X \otimes_B X^\vee \cong$ $A \oplus Y$ as complexes $(A, A)$-bimodules 
and $X^\vee \otimes_A X\cong$ $B \oplus Z$ as complexes of 
$(B, B)$-bimodules. Let $M$ and $N$ be finitely generated  
$(A, B)$-bimodules,   projective as left and  right  modules and let 
$U=[M]-[N]$. Then $U^\vee=[M^\vee]-[N^\vee]$. Recall that  $U$ and 
$U^\vee$  induce a   virtual Morita equivalence  between 
$A$ and $B$   if   $U \cdot_B\  U^\vee = $ $[A]$  in $\CR(A,A)$
and  $U^\vee \cdot_B\  U = $   $[B]$  in $\CR(B,B)$. 
We denote by $C^b(A)$ the category of bounded complexes of finitely 
generated $A$-modules, by $K^b (A)$  the homotopy category of bounded 
complexes  of  finitely generated $A$-modules and by $D^b(A)$ the  
bounded derived category of $\modh A$.  For a  finitely generated  
$A$-module  $M$ we denote  by $[M] $ the  isomorphism class of $M$  
as an element of the Grothendieck group of $\modh A$ with respect to 
split exact sequences. We use the analogous notation for bimodules.

\begin{pro} \label{extendProp}
Let  $A$ and $B$ be symmetric $\CO$-algebras. Let $M$, $N$  be  
finitely generated $(A,B)$-bimodules which are  projective as left 
$A$-module  and as right $B$-module  and let $X$ be a bounded complex 
of  finitely generated $(A,B)$-bimodules which are  projective as left 
$A$-modules and as right $B$-modules. 

\begin{enumerate} [\rm (a)]

\item  
$X'$ induces a Rickard equivalence between $A'$ and $B'$, 
if and only if  $X$ induces a Rickard equivalence between $A$ and $B$.

\item  
$M'$ induces a stable equivalence of Morita type  between $A'$ and $B'$  
if and only if  $M$ induces a stable equivalence of Morita type between 
$A$ and $B$.

\item  
$M'$ induces a Morita equivalence between $A'$ and $B'$ if and only if 
$M$ induces a Morita equivalence between $A$ and $B$.
 
\item  
$[M']-[N']$  and $[(M')^\vee]-[(N')^\vee]$ induce  a virtual Morita 
equivalence between $A'$ and $B'$  if and only if $[M]-[N]$ and 
$[M^\vee]-[N^\vee]$ induce a virtual Morita equivalence between $A$ and 
$B$.
\end{enumerate}
\end{pro}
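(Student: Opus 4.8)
The plan is to deduce all four statements from the faithfully flat descent results established in Lemmas \ref{Offlat}--\ref{contract}, by rewriting each notion of equivalence purely in terms of isomorphisms, splittings, acyclicity and contractibility of (complexes of) bimodules, and then observing that each such property descends along $\CO\subseteq$ $\CO'$. The one structural point to record at the outset is that for an $(A,B)$-bimodule $M$ which is finitely generated and projective on each side, we have a canonical isomorphism $(M')^\vee\cong$ $(M^\vee)'$ of $(B',A')$-bimodules, and more generally $(X')^\vee\cong$ $(X^\vee)'$ for a bounded complex $X$ of such bimodules; this follows from $\CO'\tenO\Hom_\CO(M,\CO)\cong$ $\Hom_{\CO'}(M',\CO')$, which is the same $\Hom$-commutes-with-base-change statement (\cite[Theorem I.11.7]{NaTs}) already used in the proof of Lemma \ref{split}. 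I will also use freely that $\CO'\tenO -$ commutes with tensor products over $A$ or $B$ (i.e. $(M\tenB L)'\cong$ $M'\ten_{B'} L'$) and with direct sums, and that $A'$, $B'$ are again symmetric $\CO'$-algebras.

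With these preliminaries, each part is a short argument. For (a): $X$ is a Rickard complex iff $X\tenB X^\vee\cong$ $A\oplus Y$ and $X^\vee\tenA X\cong$ $B\oplus Z$ for contractible $Y$, $Z$. The ``only if'' direction is immediate by applying $\CO'\tenO -$. For ``if'', suppose $X'\ten_{B'}(X')^\vee\cong$ $A'\oplus Y'$ with $Y'$ contractible; the left-hand side is $(X\tenB X^\vee)'$, so the complex $X\tenB X^\vee$ becomes, after base change, a direct sum of $A'$ and a contractible complex. Writing $X\tenB X^\vee=$ $A\oplus Y$ is not automatic — one must first produce the splitting. Here is the key point where I expect to do a little work: the composite $X\tenB X^\vee\to$ $A\to$ $X\tenB X^\vee$ of the counit and unit of adjunction becomes, after base change, the corresponding composite for $X'$, which is a homotopy equivalence onto the $A'$-summand; by Lemma \ref{split} (applied degreewise in $C^b$) a chain map that is a split monomorphism after base change is already split, and then the complement $Y$ is acyclic with split differential after base change, hence acyclic with split differential by Lemmas \ref{fflat} and \ref{split}, hence contractible. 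The same argument on the other side gives the $B$-side identity, so $X$ is a Rickard complex.

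Parts (b), (c), (d) follow the same template with the homological bookkeeping replaced by module-theoretic bookkeeping. For (c), a bimodule $M$ that is projective on both sides induces a Morita equivalence iff the counit $M\tenB M^\vee\to$ $A$ and unit $B\to$ $M^\vee\tenA M$ are isomorphisms; since $\CO'\tenO -$ is faithfully flat, a map of finitely generated modules is an isomorphism iff it is after base change (Lemma \ref{fflat} applied to kernel and cokernel), so this condition descends. For (b), $M$ induces a stable equivalence of Morita type iff $M\tenB M^\vee\cong$ $A\oplus U$ and $M^\vee\tenA M\cong$ $B\oplus V$ with $U$, $V$ projective bimodules; the splitting off of the $A$-summand descends by Lemma \ref{split} exactly as in (a), and ``$U$ projective'' descends by Lemma \ref{contract}(a), using that $U$ is an $(A,A)$-bimodule, i.e. an $A\tenO A^{\op}$-module, and $(A\tenO A^{\op})'=$ $A'\tenO (A')^{\op}$. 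For (d), by definition $[M]-[N]$ and its dual induce a virtual Morita equivalence iff $[M\tenB M^\vee]+[N\tenB N^\vee]=$ $[A]+[M\tenB N^\vee]+[N\tenB M^\vee]$ in $\CR(A,A)$, and similarly over $B$; by the Krull--Schmidt theorem this is an isomorphism statement between explicit finite direct sums of bimodules, and by Lemma \ref{split} (or just faithful flatness plus Krull--Schmidt on both sides) such an isomorphism holds iff it holds after applying $\CO'\tenO -$, using once more $(M\tenB N^\vee)'\cong$ $M'\ten_{B'}(N')^\vee$. The main obstacle throughout is purely the ``if'' directions, and in every case it reduces to the single principle that a direct-sum decomposition of a finitely generated (complex of) bimodule(s) can be detected after the faithfully flat base change $\CO\subseteq$ $\CO'$, which is precisely Lemmas \ref{split} and \ref{contract}.
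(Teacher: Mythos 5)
Your treatment of (a)--(c) is essentially the paper's own argument: identify $(X')^\vee$ with $(X^\vee)'$, observe that the adjunction counit over $\CO'$ is the base change of the counit over $\CO$, descend its split surjectivity via Lemma \ref{split}, and descend the contractibility (resp.\ projectivity) of its kernel via Lemma \ref{contract}; your reformulation of (c) in terms of the unit and counit being isomorphisms is a harmless variant of treating (c) as a special case of (b). One small caution in (a): Lemma \ref{split} concerns module homomorphisms, and a degreewise splitting of a chain map is not in general a splitting in $C^b$; the argument works because $A$ is concentrated in a single degree (so the complement is degreewise split with contractible kernel, hence split as a complex), and this should be said explicitly, as the paper does.

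The genuine problem is in part (d). You correctly reformulate the virtual Morita condition as a bimodule isomorphism
$$M\tenB M^\vee\oplus N\tenB N^\vee\;\cong\; A\oplus M\tenB N^\vee\oplus N\tenB M^\vee$$
(and its analogue over $B$), but your justification for descending this isomorphism from $\CO'$ to $\CO$ does not work. Lemma \ref{split} is inapplicable here: it descends the splitness of a morphism that is already defined over $\CO$, whereas in (d) you are given only an abstract isomorphism after base change, with no candidate map between the two sides over $\CO$. The alternative you offer, ``faithful flatness plus Krull--Schmidt on both sides,'' also fails in the stated generality: this section explicitly does \emph{not} assume $\CO'$ to be finitely generated as an $\CO$-module, and without that finiteness one cannot compare the indecomposable decomposition of a bimodule $U$ with that of $U'=\CO'\tenO U$ (an indecomposable $U$ may split after base change, and restriction of scalars is unavailable), so one cannot conclude that $[U]\mapsto[U']$ is injective on $\CR(A,A)$. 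If $\CO'$ were finite free over $\CO$, your argument would be the classical Noether--Deuring proof; as it stands, the step ``such an isomorphism holds iff it holds after applying $\CO'\tenO-$'' is exactly the assertion that needs proof. The paper closes this gap by invoking Grothendieck's faithfully flat version of the Noether--Deuring theorem, \cite[Proposition (2.5.8) (i)]{GrothEGA4}, which states that for finitely presented modules such an isomorphism exists over $\CO$ if and only if it exists after the faithfully flat base change $\CO\to\CO'$; you need this result (or an equivalent spreading-out and specialisation argument) to complete (d).
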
 

\begin{proof}   
One direction of the  implications is  trivial. We verify the  reverse 
implications. We will apply the previous lemmas in this section to 
the $\CO$-algebras $A \tenO B^{\op} $,  $A\tenO A^{\op} $ 
etc. In what follows, we will freely switch between the terminology 
of $(A, A)$-bimodules and $A \tenO A^{\op} $-modules.

We prove (a).   
Suppose that $X'$ induces a Rickard equivalence between $A'$ and 
$B'$ and let $Y$ be a  contractible bounded  complex of 
$(A', A')$-bimodules such that  $ X' \otimes_{B'} (X')^\vee  =$ 
$A' \oplus Y $.

The functors  $X\otimes_{B} - $ and $X^\vee\otimes_A- $ define a pair of  
biadjoint functors between $C^b(B\otimes _{\CO}A ^{\op})$ and   
$C^b(A\otimes_{\CO} A^{\op} ) $ (this is well-known; see e. g. 
\cite[Section 6.10]{Lintransfer}). Denote by 
$\epsilon_X: X \otimes_B X^\vee \to$ $A$ and
$\epsilon_{X^\vee}: X^\vee\otimes_A X \to$ $B$ the counits of these  
adjunctions. Similarly, denote by  
$\epsilon_{X'}: X' \otimes_{B'} (X')^\vee \to A'$ and
$\epsilon_{(X')^\vee}: (X')^\vee\otimes_{A'} X' \to B$     
the counits corresponding to the biadjoint pair 
$ X'\otimes_{B'} - $ and $(X')^\vee \otimes _{A'} - $ between 
$C^b(B'\otimes_{\CO} (A')^{\op}) $ and 
$C^b(A'  \otimes_{\CO} (A')^{\op})$.
Since  the terms of $X$ are finitely generated and $\CO$-free, we have 
that $(X')^\vee \cong (X^\vee)'$ . Hence we may assume that   
$\epsilon_{X'} =\epsilon_{X} ' $ and 
$\epsilon_{(X')^\vee} =\epsilon_{X^\vee}'$. Now the hypothesis implies 
that $X' \otimes_{B'}- $ and $(X')^\vee\otimes_{A'} - $  define a pair 
of inverse equivalences between $K^b(A' \otimes_{\CO'} (A')^{\op})$ and  
$K^b(B' \otimes_{\CO'} (A')^{\op} )$.  Thus  
$\epsilon_{X'} : X' \otimes_{B'} (X')^\vee \to A'$ is an 
isomorphism in $K^b(A'\otimes_{\CO} (A')^\op)$ (see for instance  
\cite[Chapter 4, \S 2, Prop.~4]{Mac}). Since $A'$ is concentrated    
in a single degree it follows that $\epsilon_X$ is split surjective  
in  $C^b(A'\otimes_{\CO'} (A')^{\op}) $ and 
$X' \otimes_{B'} (X')^\vee  =$ $A' \oplus \Ker(\epsilon_{X'})$ in 
$C^b(A'\otimes_{\CO'} (A')^{\op})$. Since we also have 
$X' \otimes_{B'} (X')^\vee  =$ $A' \oplus Y$  in  
$C^b(A'\otimes_{\CO'} (A')^{\op})$  with $Y$ contractible, by the 
Krull-Schmidt property of $C^b(A'\otimes_{\CO'} (A')^{\op})$ we have 
that $\Ker(\epsilon_{X'})$ is contractible. By Lemma~\ref{fflat} we have 
that $\Ker(\epsilon_{X'}) =$ $\Ker(\epsilon_{X}') =$
$(\Ker(\epsilon_{X}))'$. Hence by Lemma \ref{contract} we have 
that $\Ker(\epsilon_{X}) $ is contractible as a complex of 
$(A,A)$-bimodules. Similarly by Lemma~\ref{fflat} we have that  
$\epsilon_X$ is surjective and  by Lemma \ref{split} that  
$\epsilon_X$ is split (note that since $A$ is concentrated in a  
single degree, namely zero, the split surjectivity of 
$\epsilon_{X}$ as map of complexes is equivalent to the split 
surjectivity of the degree $0$-component of $\epsilon_{X} $). Thus, 
we have that $X \otimes_{B} {X}^\vee = $ $A \oplus \Ker(\epsilon_X)$ 
as complexes of $(A, A)$-bimodules and $\Ker(\epsilon_X)$ is  
contractible. Arguing similarly for $X^\vee\otimes_{A} {X}$ proves (a).

The proof of  (b)  proceeds along the same lines  as that of (a), the 
contractibility arguments are replaced by the fact that if  $U$ is a  
finitely generated $(A,A)$-bimodule, then $U$ is projective if $U'$ 
is a projective $(A', A')$-bimodule  (Lemma~\ref{contract}).
The proof of  (c)  is a special case of the proof of (b).   

Statement (d) is a consequence of a version, due to Grothendieck, of 
the  Noether-Deuring Theorem for the base rings under consideration. 
More precisely,  if $[M]-[N]$ and its dual induce a virtual Morita  
equivalence, then 
$$[M\tenB M^\vee]+[N\tenB N^\vee] - 
[M\tenB N^\vee]-[N\tenB M^\vee]=[A]\ ,$$ 
which is equivalent to the existence of an $(A,A)$-bimodule isomorphism 
$$M\tenB M^\vee\oplus N\tenB N^\vee\cong 
A\oplus M\tenB N^\vee\oplus N\tenB M^\vee\ .$$ 
By \cite[Proposition (2.5.8) (i)]{GrothEGA4} such an isomorphism exists 
if and only if there exists an analogous isomorphism for the 
corresponding $(A',A')$-bimodules, whence (d). 
\end{proof}

\begin{rem} 
While the categorical equivalences in (a), (b), (c) in the theorem above 
induced by a bimodule or a complex of bimodules have the property that
their inverses are automatically induced by the dual of that bimodule
or complex, this is not true for virtual Morita equivalences, whence the
extra hypothesis in (d). For instance, if $A$ is a split semisimple 
$K$-algebra with $m$ isomorphism classes of simple modules, then any 
matrix $(a_{i,j})$ in $\SL_m(\Z)$ with inverse $(b_{i,j})$ yields a 
virtual self Morita equivalence of $A$ of the form 
$\sum_{i,j} a_{i,j} [S_i\tenk S_j^\vee]$ with inverse
$\sum_{i,j} b_{i,j} [S_i\tenk S_j^\vee]$, where $\{S_i\}$ is a set of 
representatives of the isomorphism classes of simple $A$-modules, and 
where the indices $i$, $j$ run from $1$ to $m$. 
\end{rem}

\section{Descent and relative projectivity} 
\label{descentrelproj-Section}

Let $(K,\CO,k)\subseteq$ $(K',\CO',k')$ be an extension of $p$-modular 
systems (see Definition \ref{defi:pmodext}).
Let $G$ be a finite group and $P$ a subgroup of $G$. An $\OG$-module 
$U$ is called {\it relatively $P$-projective}, if $U$ is isomorphic to 
a direct summand of $\Ind^G_P(V)=$ $\OG\tenOP V$ for some $\OP$-module 
$V$, where $\OG$ is regarded as an $\OG$-$\OP$-bimodule. Dually, $U$ 
is {\it relatively $P$-injective}, if $U$ is isomorphic to a direct 
summand of $\Hom_\OP(\OG, V)$ for some $\OP$-module $V$, where 
$\OG$ is regarded as an $\OP$-$\OG$-bimodule. It is well-known that
because $\OG$ is symmetric, the notions of relative projectivity and
relative injectivity coincide.  Any $\OG$-module is relatively 
$\CO P$-projective, where $P$ is a Sylow $p$-subgroup of $G$.  Following 
Green \cite{Greenindec}, a {\it vertex  of a finitely generated 
indecomposable $\OG$-module $U$} is a minimal $p$-subgroup $P$ of $G$ 
such that $U$ is relatively $P$-projective. In that case, $U$ is 
isomorphic to a direct summand of $\Ind^G_P(V)$ for some finitely 
generated indecomposable $\OP$-module $V$, called an {\it $\OP$-source 
of $V$}, and then $V$ is isomorphic to a direct summand of 
$\Res^G_P(U)$.  If $P$ is clear from the context, $V$ is just called 
a source of $U$. The vertex-source pairs $(P,V)$ of $U$ are unique up 
to $G$-conjugacy. See e. g. \cite[\S 18]{Thev} for details.

\begin{lem}[{\cite[(III.4.14)]{Feitbook}}] \label{lemma-c1}
Suppose that $\CO'$ is finitely generated as an $\CO$-module.
Let $G$ be a finite group and $U$ a finitely generated  $\CO$-free 
indecomposable $\OG$-module. Let $P$ be a vertex of $U$. Then $P$ is
a vertex of every indecomposable direct summand of the $\CO'G$-module
$\CO' \tenO U$.
\end{lem}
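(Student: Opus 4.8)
The plan is to determine the vertex $Q$ of an arbitrary indecomposable direct summand $W$ of the $\CO'G$-module $U'=\CO'\tenO U$ by squeezing it between two copies of $P$: I will show that $Q$ is $G$-conjugate to a subgroup of $P$ and, conversely, that $P$ is $G$-conjugate to a subgroup of $Q$, which forces $Q$ to be $G$-conjugate to $P$.

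\emph{Preliminaries.} Since $\CO$ is a principal ideal domain and $\CO'$ is a finitely generated torsion-free $\CO$-module (it is a domain containing $\CO$), the $\CO$-module $\CO'$ is free of some finite rank $m\geq 1$. Hence $\Res^G_{\OG}(U')\cong U^{\oplus m}$ as $\OG$-modules. As $\OG$ is an $\CO$-algebra finitely generated over $\CO$, the Krull--Schmidt theorem holds for finitely generated $\OG$-modules, so every indecomposable direct summand of $\Res^G_{\OG}(U')$ is isomorphic to $U$; since restriction of scalars along $\OG\subseteq\CO'G$ is faithful, $\Res^G_{\OG}(W)\neq 0$, and therefore $\Res^G_{\OG}(W)\cong U^{\oplus s}$ for some integer $s$ with $1\leq s\leq m$. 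Likewise $U'$ and $W$ are finitely generated over $\CO'$, so Krull--Schmidt is also available for $\CO'G$-modules.

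\emph{The vertex $Q$ of $W$ lies inside $P$.} Let $V$ be an $\OP$-source of $U$, so $U$ is a direct summand of $\Ind^G_P(V)=\OG\tenOP V$. From the isomorphism $\CO'G\cong\OG\tenOP\CO'P$ of $(\OG,\CO'P)$-bimodules one obtains $\CO'\tenO(\OG\tenOP V)\cong\CO'G\otimes_{\CO'P}(\CO'\tenO V)$, so $U'$, and hence its direct summand $W$, is relatively $\CO'P$-projective as a $\CO'G$-module. Thus $Q$ is $G$-conjugate to a subgroup of $P$.

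\emph{$P$ lies inside $Q$.} Let $W_0$ be a $\CO'Q$-source of $W$, so $W$ is a direct summand of $\CO'G\otimes_{\CO'Q}W_0$. From the isomorphism $\CO'G\cong\OG\tenOQ\CO'Q$ of $(\OG,\CO'Q)$-bimodules one obtains $\Res^G_{\OG}(\CO'G\otimes_{\CO'Q}W_0)\cong\OG\tenOQ\Res^{\CO'Q}_{\OQ}(W_0)=\Ind^G_Q(\Res^{\CO'Q}_{\OQ}(W_0))$, so $\Res^G_{\OG}(W)$ is relatively $Q$-projective as an $\OG$-module. Since $U$ is a direct summand of $\Res^G_{\OG}(W)\cong U^{\oplus s}$ with $s\geq 1$, the module $U$ is relatively $Q$-projective, and therefore its vertex $P$ is $G$-conjugate to a subgroup of $Q$. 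Combined with the previous step, $Q$ is $G$-conjugate to $P$, which is what was to be shown. I expect the only delicate point to be purely technical rather than conceptual: verifying the bimodule isomorphisms $\CO'G\cong\OG\tenOP\CO'P$ and $\CO'G\cong\OG\tenOQ\CO'Q$ and the resulting compatibility of extension and restriction of scalars along $\OG\subseteq\CO'G$ with the induction functors. The finite-generation hypothesis on $\CO'$ enters precisely here and in the preliminaries, guaranteeing that $\CO'$ is $\CO$-free of finite rank and that $U'$ and $W$ are finitely generated, so that the Krull--Schmidt bookkeeping goes through in both module categories.
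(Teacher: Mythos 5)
The paper does not prove this lemma itself but cites it to Feit (III.4.14); your argument is correct and is essentially the standard proof of that result. The two-sided sandwich --- extension of scalars preserves relative $P$-projectivity, while restriction of scalars back to $\OG$ turns a summand of $U'$ into a nonzero sum of copies of $U$ that is relatively $Q$-projective --- together with the Krull--Schmidt bookkeeping is exactly right, and the technical compatibilities of induction with extension and restriction of scalars that you flag do hold as stated.
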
 

\begin{lem} \label{lemma-source}
Suppose that $\CO'$ is finitely generated as an $\CO$-module. Let $G$ 
be a finite group and $U$ a finitely generated  $\CO$-free 
indecomposable $\OG$-module. Let $P$ be a vertex of $U$. Let $V$ be an 
indecomposable direct summand of the $\CO'G$-module $U'=$ $\CO'\tenO U$ 
and let $Y$ be an $\CO'P$-source of $V$. Suppose that $Y\cong$ 
$\CO'\tenO X$ for some $\OP$-module $X$. Then $X$ is an $\OP$-source of 
$U$, and every indecomposable direct summand of $U'$ has $Y$ as a 
source. In particular, $U$ has a trivial source if and only if every 
indecomposable direct summand of $U'$ has a trivial source.
\end{lem}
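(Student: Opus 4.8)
The strategy is to prove that $X$ is an $\OP$-source of $U$; the remaining assertions then follow quickly. First I would note that, by Lemma~\ref{lemma-c1}, every indecomposable direct summand of $U'$ has vertex $P$, so in particular the $\CO'P$-source $Y$ of $V$ has vertex $P$. Since $\CO\subseteq\CO'$ is faithfully flat (Lemma~\ref{Offlat}), the functor $\CO'\tenO-$ carries a nontrivial direct-sum decomposition of $X$ to a nontrivial one of $\CO'\tenO X=Y$; hence $Y$ indecomposable forces $X$ indecomposable. Likewise, because $\CO'\tenO-$ commutes with $\Ind^P_Q$ for every $Q\leq P$, relative $Q$-projectivity of $X$ would pass to $Y$, so that $Y$ having vertex $P$ forces $X$ to have vertex $P$ as an $\OP$-module.

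Next I would show that $X$ is isomorphic to a direct summand of $\Res^G_P(U)$. As $Y$ is an $\CO'P$-source of $V$, it is a direct summand of $\Res^G_P(V)$, and since $V\mid U'$ it is a direct summand of $\Res^G_P(U')=\CO'\tenO\Res^G_P(U)$. Because $Y\cong\CO'\tenO X$, a Noether--Deuring descent argument --- of the kind used in the proof of Proposition~\ref{extendProp}(d) via \cite[Proposition (2.5.8)(i)]{GrothEGA4}, and which here can also be carried out directly using the faithful flatness of Lemma~\ref{Offlat} together with the compatibility of $\CO'\tenO-$ with $\Hom$-spaces --- yields that $X$ is a direct summand of $\Res^G_P(U)$. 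Thus $X$ is an indecomposable direct summand of $\Res^G_P(U)$ of vertex $P$. Now fix a vertex--source pair $(P,W)$ of $U$, so that $U\mid\Ind^G_P(W)$. Applying Mackey's formula to $\Res^G_P\Ind^G_P(W)$, all the off-diagonal summands are relatively projective with respect to proper subgroups of $P$ and so have no indecomposable summand of vertex $P$; hence $X\cong\,^{g}W$ for some $g\in N_G(P)$. Since $\,^{g}W$ is again an $\OP$-source of $U$, so is $X$.

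It follows that $U\mid\Ind^G_P(X)$, whence $U'\mid\Ind^G_P(\CO'\tenO X)=\Ind^G_P(Y)$. For any indecomposable direct summand $V_j$ of $U'$ --- which has vertex $P$ by the first step --- applying Mackey's formula to $\Res^G_P\Ind^G_P(Y)$ in the same way shows that an $\CO'P$-source of $V_j$ is $N_G(P)$-conjugate to $Y$; this is the second assertion. For the last statement, $U$ has trivial source if and only if its source $X$ is the trivial $\OP$-module $\CO$; since $X$ is $\CO$-free of rank one, its $P$-action is given by a homomorphism $P\to\CO^\times$, which becomes trivial after applying $\CO'\tenO-$ if and only if it is already trivial (because $\CO^\times\hookrightarrow(\CO')^\times$), so $X\cong\CO$ if and only if $Y\cong\CO'$; and by the previous paragraph $Y\cong\CO'$ if and only if every indecomposable direct summand of $U'$ has trivial source, the trivial module being $N_G(P)$-stable. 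The main obstacle is the Noether--Deuring descent of the ``is a direct summand of'' relation used in the second paragraph; once that is in hand, the rest is standard bookkeeping with vertices, sources and Mackey's formula.
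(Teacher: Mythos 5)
Your proposal is correct and its overall skeleton matches the paper's: show $X$ is an indecomposable direct summand of $\Res^G_P(U)$ with vertex $P$, conclude it is a source, then deduce $U'\mid\Ind^G_P(Y)$ and read off the remaining assertions. The genuine difference lies in how you descend the relation ``$Y$ is a direct summand of $\Res^G_P(U')$'' to ``$X$ is a direct summand of $\Res^G_P(U)$'', which you rightly identify as the crux. The paper does this by restriction of scalars: since $\CO'$ is finitely generated (hence free, say of rank $d$) as an $\CO$-module, one has $\CO$-module isomorphisms $U'\cong U^d$ and $Y\cong X^d$, and a single application of the Krull--Schmidt theorem over $\OG$ and $\OP$ finishes the step --- this also yields the indecomposability and vertex of $X$ via Lemma~\ref{lemma-c1} without a separate induction/flatness argument. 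You instead invoke a Noether--Deuring-type descent. That works, but note that \cite[Proposition (2.5.8)(i)]{GrothEGA4}, as used in Proposition~\ref{extendProp}(d), descends \emph{isomorphisms}, not the summand relation: knowing $\Res^G_P(U')\cong Y\oplus Z$ for some $\CO'G$-module $Z$ does not put $Z$ in the form $\CO'\tenO Z_0$, so an extra argument (your faithful-flatness/$\Hom$-surjectivity route, or the restriction-of-scalars trick) is genuinely needed; the hypothesis that $\CO'$ is module-finite over $\CO$ is what makes the elementary version available. Your Mackey-formula derivation of the fact that an indecomposable vertex-$P$ summand of $\Res^G_P(U)$ is $N_G(P)$-conjugate to a source is a correct expansion of what the paper asserts in one line, and your treatment of the trivial-source statement (via $X\cong\CO\Leftrightarrow Y\cong\CO'$, using $\CO^\times\hookrightarrow(\CO')^\times$) is a valid, slightly more explicit alternative to the paper's appeal to the special case $Y=\CO'$; just note that the rank-one claim for $X$ is only available once one assumes $Y\cong\CO'$.
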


\begin{proof} 
This is basically a special case of the Noether-Deuring Theorem; we 
sketch the argument. Since $\CO'$ is finitely generated as an 
$\CO$-module, we have $\CO'\cong$ $\CO^d$ for some positive integer $d$. 
Thus restricting $U'$ to $\OG$ yields an $\OG$-isomorphism $U'\cong$ 
$U^d$, and hence, as an $\OG$-module, $V'$ is isomorphic to $U^{c}$ for 
some positive integer $c$, by the Krull-Schmidt Theorem.  Similarly, we 
have an $\OP$-isomorphism $Y\cong$ $X^d$. Since $Y$ is isomorphic to
a direct summand of $\Res^G_P(V)$, it follows again from the 
Krull-Schmidt Theorem that $X$ is isomorphic to a direct summand of 
$\Res^G_P(U)$. By Lemma \ref{lemma-c1}, $P$ is a vertex of $X$ and of 
$U$, and therefore $X$ is a source of $U$. Since $U$ is isomorphic to a 
direct summand of $\Ind^G_P(X)$, it follows that $U'$ is isomorphic to 
a direct summand of $\Ind^G_P(Y)$. This implies that every 
indecomposable summand of $U'$ has $Y$ as a source. The last statement 
follows from the special case where $Y=$ $\CO'$.
\end{proof}

We use the following concepts and
results from Kn\"orr \cite{Knoerr78} and Th\'evenaz \cite{Thev85}.
Let $G$ be a finite group, $P$ a $p$-subgroup of $G$ and $U$ a
finitely generated $\OG$-module. 
A {\it relative $P$-projective presentation of $U$} is a 
pair $(Y,\pi)$ consisting of a relatively $P$-projective $\OG$-module 
$Y$ and a surjective $\OG$-homomorphism $\pi : Y\to$ $U$ whose 
restriction to $\OP$ is split surjective.
Such a presentation is called a {\it relatively $P$-projective cover} 
if in addition $\ker(\pi)$ has no nonzero relatively $P$-projective
direct summand; by \cite[Cor. (1.9)]{Thev85} this is equivalent to
$\pi$ being {\it essential}; that is, any endomorphism $\beta$ of
$Y$ satisfying $\pi=$ $\pi\circ\beta$ is an automorphism of $Y$.
The results in \cite{Knoerr78} and \cite{Thev85} imply that $U$
has a relative projective resolution which is unique up to 
isomorphism and which is additive in $U$. Moreover, if $U$ is
indecomposable and not relatively $P$-projective, and if $(Y,\pi)$ is
a relatively $P$-projective cover of $U$, then $\ker(\pi)$ is
indecomposable and not relatively $P$-projective. These results, 
together with Lemma \ref{lemma-c1}, imply immediately the 
following.

\begin{lem} \label{lemma-c3}
Suppose that $\CO'$ is finitely generated as an $\CO$-module.
Let $G$ be a finite group and $R$ a $p$-subgroup of $G$. Let $U$ be an 
$\CO$-free $\OG$-module which has no nonzero relatively
$R$-projective direct summand. Let $(Y,\pi)$ be a relatively
$R$-projective cover of $U$. Then the $\CO'G$-module $U'=$ 
$\CO'\tenO U$ has no nonzero relatively $R$-projective direct summand
and $(Y',\pi')=$ $(\CO'\tenO Y, \Id_{\CO'}\ten \pi)$ is
a relatively $R$-projective cover of $U'$.
\end{lem}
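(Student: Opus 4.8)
The plan is to deduce this directly from the cited structural results of Kn\"orr and Th\'evenaz together with Lemma \ref{lemma-c1}, using faithful flatness of $\CO\subseteq \CO'$ and the fact that $\CO'$ is free of finite rank $d$ as an $\CO$-module. First I would record that, since $\CO'\cong \CO^d$ as $\CO$-modules, restriction along $\CO\subseteq\CO'$ turns $U'=\CO'\tenO U$ into $U^d$ as an $\OG$-module, and similarly $Y'\cong Y^d$ and $(\ker\pi)'\cong(\ker\pi)^d$ as $\OG$-modules; the short exact sequence $0\to\ker\pi\to Y\xrightarrow{\pi} U\to 0$ base-changes to $0\to(\ker\pi)'\to Y'\xrightarrow{\pi'} U'\to 0$ by Lemma \ref{Offlat} (or simply because $\CO'$ is $\CO$-free), and this sequence is still split on restriction to $\OP$ because tensoring a split $\OP$-sequence with $\CO'$ keeps it split.

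Next I would verify the two assertions in turn. For the claim that $U'$ has no nonzero relatively $R$-projective direct summand: suppose $W$ is such a summand of $U'$. Decompose $U=\bigoplus_i U_i$ into indecomposables; by hypothesis no $U_i$ is relatively $R$-projective, so by Lemma \ref{lemma-c1} no indecomposable summand of any $U_i'$ is relatively $R$-projective (a module is relatively $R$-projective iff all its vertices are contained in $R$-conjugates, and Lemma \ref{lemma-c1} says the vertex is preserved under the base change $\CO\to\CO'$ applied to an indecomposable $\OG$-module). Since $U'=\bigoplus_i U_i'$, the Krull--Schmidt theorem over $\CO'G$ forces $W$ to be a sum of indecomposable summands of the $U_i'$, none relatively $R$-projective, so $W=0$. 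For the claim that $(Y',\pi')$ is a relatively $R$-projective cover of $U'$: $Y'$ is relatively $R$-projective since $Y$ is (base change commutes with $\Ind_R^G$), and $\pi'$ is surjective with $\OP$-split restriction by the previous paragraph; it remains to see that $\ker(\pi')=(\ker\pi)'$ has no nonzero relatively $R$-projective summand, which is exactly the first assertion applied to $\ker\pi$ in place of $U$ (note $\ker\pi$ is $\CO$-free and, by the definition of a relatively $R$-projective cover via \cite[Cor. (1.9)]{Thev85}, has no nonzero relatively $R$-projective summand). Alternatively one can argue via the essentiality characterization: if $\beta'\in\End_{\CO'G}(Y')$ satisfies $\pi'=\pi'\circ\beta'$, then since $\Hom_{\CO'G}(Y',Y')\cong\CO'\tenO\Hom_{\OG}(Y,Y)$ by \cite[Theorem I.11.7]{NaTs} and essentiality of $\pi$ passes through a faithfully flat base change on the level of the relevant $\Hom$-spaces, $\beta'$ is invertible; but the summand-free formulation is cleaner and is what the cited results give directly.

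The main obstacle is purely bookkeeping: making sure that "relatively $R$-projective" behaves correctly under the base change $\CO\to\CO'$ at the level of indecomposable summands, which is precisely the content of Lemma \ref{lemma-c1} (vertices are preserved), so once that is invoked the rest is Krull--Schmidt over $\CO'G$ and the definition of a relatively $R$-projective cover. There is nothing deep beyond correctly citing \cite{Knoerr78}, \cite{Thev85} for the existence/uniqueness/additivity of relative projective covers and \cite[Cor. (1.9)]{Thev85} for the equivalence of "essential" with "kernel has no relatively $R$-projective summand"; the lemma then follows by combining these with Lemma \ref{lemma-c1} exactly as indicated in the sentence preceding the statement.
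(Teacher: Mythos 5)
Your proposal is correct and is essentially an expansion of what the paper leaves implicit: the paper simply states that the Kn\"orr--Th\'evenaz results on relative projective covers, combined with Lemma \ref{lemma-c1}, ``imply immediately'' the lemma, and you have supplied exactly the bookkeeping needed. Two small remarks. First, there is a typo: you write ``$\OP$-split'' in two places where you mean ``$\OR$-split'' (the relevant subgroup is $R$, not $P$). Second, the claim that $\ker\pi$ is $\CO$-free (needed to apply the first assertion to $\ker\pi$) deserves a word: $Y$, being relatively $R$-projective and finitely generated, is a direct summand of $\Ind^G_R\Res^G_R(U)$, hence $\CO$-free since $U$ is, and $\ker\pi$ is then $\CO$-free as a finitely generated torsion-free module over the complete discrete valuation ring $\CO$; you gesture at this parenthetically, and it is worth making explicit since the lemma's hypothesis on $U$ being $\CO$-free is genuinely used there. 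With those touch-ups the argument matches the paper's intended route.
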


\section{Descent and Galois automorphisms} \label{descentGalois-Section}

Let $(K,\CO,k)\subseteq$ $(K',\CO',k')$ be an extension of $p$-modular 
systems (see Definition~\ref {defi:pmodext}).
The following Lemma, due to Reiner, makes use of the fact that finite 
fields have trivial Schur indices.

\begin{lem}[{\cite[Theorem 3]{Re66}, \cite[(30.33)]{CRI}}] 
\label{lemma-c2}
Suppose that the field $k$ is finite. Let $G$ be a finite group and $U$ 
a finitely generated $\CO$-free indecomposable $\OG$-module. Then the 
indecomposable direct summands in a decomposition of $\CO'\tenO U$ as 
an $\CO'G$-module are pairwise nonisomorphic.
\end{lem}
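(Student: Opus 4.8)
The plan is to pass to endomorphism rings and to exploit the one essential arithmetic input, namely that every finite division ring is a field (Wedderburn's little theorem, which is exactly the statement that Schur indices over finite fields are trivial). Put $E=\End_{\OG}(U)$. Since $\OG$ is module-finite over the complete local noetherian ring $\CO$ and $U$ is finitely generated and indecomposable, $E$ is a module-finite, hence semiperfect, $\CO$-algebra, and it is local with residue division ring $D=E/J(E)$. As $U$ is finitely presented over $\OG$ and $\CO'$ is flat over $\CO$ (Lemma~\ref{Offlat}), the canonical map is a ring isomorphism $\End_{\CO'G}(\CO'\tenO U)\cong \CO'\tenO E=:E'$ (this is the base change isomorphism for $\Hom$ used in the proof of Lemma~\ref{split}, applied with $\CO'$ in place of $B$; note $\CO'$ need not be module-finite over $\CO$). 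Now $E'$ is module-finite, hence semiperfect, over $\CO'$, and a decomposition of $1_{E'}$ into primitive orthogonal idempotents corresponds to a decomposition of $\CO'\tenO U$ into indecomposable $\CO'G$-modules; two such summands are isomorphic if and only if the associated simple $E'/J(E')$-modules are isomorphic. Hence the lemma is equivalent to the assertion that all the integers $n_i$ in the Wedderburn decomposition $E'/J(E')\cong \prod_i M_{n_i}(D_i)$ are equal to $1$, i.e. that $E'/J(E')$ is a direct product of division rings.

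The next step is to compute $E'/J(E')$. By Wedderburn's little theorem, $D=E/J(E)$, being a finite-dimensional division algebra over the finite field $k$, is a finite field; write $D=\F_q$. Set $\bar E=E/J(\CO)E$, a finite-dimensional local $k$-algebra with nilpotent radical $J(\bar E)$ and $\bar E/J(\bar E)\cong\F_q$. Since $J(\CO')E'\subseteq J(E')$ and $E'/J(\CO')E'\cong k'\tenk\bar E$, it remains to identify $J(k'\tenk\bar E)$. The ideal $k'\tenk J(\bar E)$ is nilpotent, with quotient $k'\tenk\F_q$, so one only needs that $k'\tenk\F_q$ is semisimple. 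This holds because $\F_q/k$ is a finite, hence separable, extension: writing $\F_q=k[x]/(f)$ with $f$ separable and factoring $f=\prod_i f_i$ into irreducible (hence separable) polynomials over $k'$ gives $k'\tenk\F_q\cong\prod_i k'[x]/(f_i)$, a finite direct product of fields. Therefore $J(k'\tenk\bar E)=k'\tenk J(\bar E)$ and $E'/J(E')\cong k'\tenk\F_q$ is a direct product of fields.

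In particular $E'/J(E')$ is commutative, so every $n_i$ above equals $1$, and by the first paragraph the indecomposable direct summands of $\CO'\tenO U$ are pairwise nonisomorphic. The only non-formal ingredient is Wedderburn's little theorem, which forces the residue ring of the local endomorphism ring to be a commutative finite field; after that, separability of finite field extensions makes the base-changed residue algebra semisimple and commutative --- which is exactly the multiplicity-one conclusion --- and I anticipate no difficulty beyond routine bookkeeping with Jacobson radicals. The one point worth stating with care is the $\Hom$ base change isomorphism in the generality where $\CO'$ is not finitely generated over $\CO$, which is why one records that $U$ is finitely presented.
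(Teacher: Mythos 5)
Your argument is correct, and it is essentially the standard proof behind the result the paper merely cites (\cite[Theorem 3]{Re66}, \cite[(30.33)]{CRI}): the paper itself flags that the key input is the triviality of Schur indices over finite fields, which is exactly your use of Wedderburn's little theorem to make $\End_{\OG}(U)/J(\End_{\OG}(U))$ a finite field, after which separability of finite field extensions gives a commutative semisimple residue algebra for $\End_{\CO'G}(\CO'\tenO U)$ and hence multiplicity one. No gaps; the points you single out for care (the $\Hom$ base change without module-finiteness of $\CO'$, and $J(\CO')E'\subseteq J(E')$) are handled correctly.
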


Denote by $\Gamma$ the automorphism group of the field extension $k'/k$.
Let $A$ be a finite-dimensional $k$-algebra, set $A'=$
$k'\tenk A$, and let $U'$ be an $A'$-module.
We say that $U'$ is {\it $\Gamma$-stable}, if $U'\cong$ 
${\,^\sigma{U'}}$ as $A'$-modules, for all $\sigma\in$ $\Gamma$. If 
$U'\cong$ $k'\tenk U$ for some $A$-module, then $U'$ is $\Gamma$-stable. 
Indeed, the map sending $\lambda\ten u$ to $\sigma^{-1}(\lambda)\ten u$ 
is an isomorphism $k'\tenk U\cong$ ${\,^\sigma(k'\tenk U)}$, where 
$\sigma\in$ $\Gamma$, $u\in$ $U$, and $\lambda\in$ $k'$. The following 
is well-known.

\begin{lem} \label{lemma-c4}
Suppose that the fields $k'$ and $k$ are finite. Let $A$ be a 
finite-dimensional $k$-algebra. Set $A'=$ $k'\tenk A$. Suppose 
that the semisimple quotient $A/J(A)$ is separable.
Let $\Gamma$ be the Galois group of the extension $k'/k$.

\begin{enumerate} [\rm(a)] 

\item
Let $S$ be a simple $A$-module. Then the $A'$-module $S'=$
$k'\tenk S$ is semisimple, isomorphic to direct sum of pairwise
nonisomorphic Galois conjugates of a simple $A'$-module $T$.

\item  
Let $S'$ be a semisimple $A'$-module. There exists a semisimple
$A$-module $S$ satisfying $S'\cong$ $k'\tenk S$ if and only if
$S'$ is $\Gamma$-stable.

\item  
Let $Y'$ be a finitely generated projective $A'$-module. There exists 
a projective $A$-module $Y$ satisfying $Y'\cong$ $k'\tenk Y$ if and only 
if $Y'$ is $\Gamma$-stable. 
\end{enumerate}
\end{lem}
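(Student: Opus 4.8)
The plan is to prove the three parts of Lemma~\ref{lemma-c4} in the order (a), then (b), then (c), using standard Galois descent for modules over finite fields. First I would fix notation: since $k'/k$ is a Galois extension of finite fields, $\Gamma=\Gal(k'/k)$ is cyclic, generated by the Frobenius relative to $k$, and $k'$ is a Galois $k$-algebra. The separability hypothesis on $A/J(A)$ ensures that $J(A')=k'\tenk J(A)$, so that $A'/J(A')\cong k'\tenk(A/J(A))$ is again semisimple; this reduces all three parts to the case where $A$ is semisimple (for (a) and (b)) or lets us pass freely between $A'$-modules and $A'/J(A')$-modules (for (c), projective covers).

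For part (a): given a simple $A$-module $S$, the module $S'=k'\tenk S$ is semisimple because $A'$ is semisimple (here I use that $\End_A(S)$ is a finite field, hence separable over $k$, so $k'\tenk \End_A(S)$ is a product of fields). Decompose $S'$ into simple $A'$-modules. The group $\Gamma$ acts on $S'$ semilinearly via $\sigma\mapsto(\lambda\ten s\mapsto \sigma(\lambda)\ten s)$, permuting the isotypic components; since $S'$ restricted to $A$ is a direct sum of copies of $S$ and $\Gamma$ acts transitively on a $k$-form's conjugates, the action is transitive on the distinct simple constituents, so $S'$ is a direct sum of Galois conjugates ${}^\sigma T$ of a single simple $T$. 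That these conjugates are pairwise nonisomorphic (so the sum is multiplicity-free) is exactly the statement that finite fields have trivial Schur index — concretely, $\End_{A'}(S')\cong k'\tenk\End_A(S)$ is a product of fields with no repeated factors; this is the content of Lemma~\ref{lemma-c2} applied in the semisimple setting, or equivalently \cite[(30.33)]{CRI}.

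For part (b): one direction is the observation already recorded before the lemma, that $k'\tenk S$ is always $\Gamma$-stable. For the converse, suppose $S'$ is $\Gamma$-stable and semisimple. Group its simple constituents into $\Gamma$-orbits; by part (a) each orbit, taken with multiplicity one, is the scalar extension $k'\tenk S_i$ of a simple $A$-module $S_i$. The $\Gamma$-stability of $S'$ forces all constituents within an orbit to occur with the same multiplicity $m_i$, so $S'\cong \bigoplus_i (k'\tenk S_i)^{m_i}\cong k'\tenk\big(\bigoplus_i S_i^{m_i}\big)$, and $S=\bigoplus_i S_i^{m_i}$ is the desired $A$-module. Part (c) follows the same pattern after reducing modulo the radical: a finitely generated projective $A'$-module $Y'$ is determined up to isomorphism by its top $Y'/J(A')Y'$, a semisimple $A'$-module; $Y'$ is $\Gamma$-stable if and only if its top is $\Gamma$-stable (using $J(A')=k'\tenk J(A)$ and that the Galois twist commutes with reduction mod the radical), so by (b) the top has the form $k'\tenk \bar S$ for a semisimple $A$-module $\bar S$; lifting $\bar S$ to the projective cover $Y$ over $A$, one checks $k'\tenk Y$ is projective over $A'$ with the same top as $Y'$, hence isomorphic to $Y'$.

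The main obstacle I anticipate is part (a), specifically pinning down that the decomposition of $S'$ is multiplicity-free and that $\Gamma$ acts transitively on the simple constituents. Both hinge on the interplay between $\End_A(S)$ being a (separable, commutative) field extension of $k$ and the structure of $k'\tenk\End_A(S)$ as a product of fields indexed by a $\Gamma$-set with free transitive action — this is precisely where the triviality of Schur indices over finite fields enters, and it is cleanest to quote Lemma~\ref{lemma-c2}/\cite[(30.33)]{CRI} rather than re-derive it. Once (a) is in hand, parts (b) and (c) are routine orbit-counting and projective-cover arguments.
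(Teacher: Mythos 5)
The paper offers no proof of Lemma~\ref{lemma-c4}: it is stated as ``well-known'' and used without argument, so there is no in-text proof to compare against. Your proposal is a correct and standard Galois-descent argument that fills this gap, and the overall structure (reduce to the semisimple quotient using separability, do (a) via $\End_{A'}(S')\cong k'\tenk \End_A(S)$, then deduce (b) by orbit-counting and (c) by projective covers) is exactly what one would write.

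The one place that deserves to be tightened is the transitivity claim in (a). You assert that $\Gamma$ acts transitively on the simple constituents of $S'$ because ``$\Gamma$ acts transitively on a $k$-form's conjugates,'' which is circular as phrased. The clean justification is through the endomorphism ring you already introduce: since $D=\End_A(S)$ is a finite field extension of $k$, the commutative ring $k'\tenk D$ is étale over $k'$ and its primitive idempotents are in bijection with the simple constituents of $S'$; the $\Gamma$-action (via the left tensor factor) permutes these idempotents, and the permutation is transitive because a generator of $D$ over $k$ has irreducible minimal polynomial over $k$, so the irreducible factors of that polynomial over $k'$ form a single $\Gamma$-orbit. With that said explicitly, parts (b) and (c) go through as you describe; in (b) it is worth noting one extra sentence --- that every simple $A'$-module $T$ occurs in $k'\tenk S$ for the (unique) simple $A$-module $S$ with $\Hom_A(S,\Res_A T)\neq 0$, by adjunction --- which is what guarantees that the multiplicity-free orbit sums in your decomposition are exactly the modules $k'\tenk S_i$ of part (a). With those two clarifications the argument is complete.
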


For the remainder of this section, assume that $k$, $k'$ are finite and 
that $\CO$, $\CO'$ are  absolutely unramified. Set $d=$ $[k':k]$. Then 
$\CO'$ is free of rank $d$ as an $\CO$-module. Let $\sigma : k'\to k'$ 
be a generator of $\Gal(k'/k) $.  Denote  by the same letter  
$\sigma: \CO' \to\CO'$ the unique ring automorphism of $\CO'$ 
lifting $\sigma$. 

Let $A$ be an $\CO$-algebra which is free of finite rank as an 
$\CO$-module. Set $A'=$ $\CO'\tenO A$. Let $\tau: \modh A'\to$   
$\modh A' $ be the functor which sends an $A'$-module $U$ to  the 
$A'$-module $\tau(U):=  \oplus_{0\leq i\leq d-1} \,^{\sigma^{i}} U$ 
and   a morphism $f: U\to V$ of $A'$-modules to the morphism   
$\tau (f) :=(f,  \ldots,  f ) $. Let $\delta: \modh A'$ $\to \modh A$   
be the  functor  which sends an $A'$-module  $U$ to the  $A$-submodule   
$\delta(U)$   of $\tau(U)$  defined by 
$\delta(U) =\{ (x, \ldots, x )\, : \, x\in U \}$  and which sends   
the morphism $f:  U\to V$ of $A'$-modules to the morphism  $\delta(f)$
defined to be the restriction of $\tau(f)$ to $\delta(U)$. Finally let 
$\epsilon: \modh A \to$ $\modh A' $  be the extension functor 
$\CO'\otimes_{\CO} -$. The functors $\epsilon$, $\delta$ and $\tau$ are
exact functors of $\CO$-linear categories where we regard $\modh A'$  
as an $\CO$-linear category by restriction of  scalars.

\begin{pro} \label{galois-natural}  
With the notation and assumptions above, the functors  
$\epsilon \circ \delta $ and $\tau $ are naturally isomorphic. 
\end{pro}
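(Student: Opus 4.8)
The plan is to exhibit an explicit natural transformation $\eta\colon \epsilon\circ\delta\Rightarrow\tau$ and then check that each $\eta_U$ is an isomorphism by reducing to the case $U=A'$, where $\eta_{A'}$ becomes the base change along $-\tenO A$ of the classical Galois isomorphism $\CO'\tenO\CO'\cong\prod_{i=0}^{d-1}\CO'$. The first thing to record is that the diagonal map $x\mapsto(x,\dots,x)$ defines an isomorphism of $A$-modules $\Res^{A'}_A U\xrightarrow{\ \sim\ }\delta(U)$: indeed $1\ten a\in A'$ acts on each twist ${}^{\sigma^i}U$ exactly as $a$ acts on $U$, so the diagonal embedding is $A$-linear and visibly bijective onto $\delta(U)$. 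In particular $\delta(U)$ is an honest $A$-submodule of $\tau(U)$, stable under the $A$-action coming from the $A'$-module structure of $\tau(U)$.

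Next I would define $\eta_U\colon\epsilon(\delta(U))=\CO'\tenO\delta(U)\to\tau(U)$ by $\lambda\ten y\mapsto\lambda y$, where $\lambda y$ denotes the action of $\lambda\in\CO'$ on $y\in\delta(U)\subseteq\tau(U)$ through the $A'$-module structure of $\tau(U)$. That $\eta_U$ is well defined (i.e. $\CO$-balanced), $A'$-linear, and natural in $U$ is routine, since $\eta_U$ is just the structural ``multiply the scalar into the submodule'' map. Under the identification of the previous paragraph, $\eta_U$ sends $\lambda\ten x$ with $x\in U$ to $\big(\sigma^{-i}(\lambda)x\big)_{0\le i\le d-1}$, because $\lambda\ten 1$ acts on the $i$-th summand ${}^{\sigma^i}U$ of $\tau(U)$ as $\sigma^{-i}(\lambda)$.

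It remains to see that $\eta_U$ is an isomorphism for every finitely generated $A'$-module $U$. Since $A'$ is noetherian, pick a presentation $A'^{\,m}\to A'^{\,n}\to U\to 0$. As noted in the excerpt both $\epsilon\circ\delta$ and $\tau$ are exact $\CO$-linear functors, so applying them yields a commutative ladder with exact rows whose vertical maps are the components of $\eta$; by additivity $\eta_{A'^{\,m}}$ and $\eta_{A'^{\,n}}$ are isomorphisms as soon as $\eta_{A'}$ is, and then passing to cokernels forces $\eta_U$ to be an isomorphism as well. So everything comes down to the case $U=A'$.

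For $U=A'$ a direct computation shows that $\eta_{A'}$ is obtained by applying $-\tenO A$ to the $\CO$-algebra homomorphism $g\colon\CO'\tenO\CO'\to\prod_{i=0}^{d-1}\CO'$, $\lambda\ten\mu\mapsto\big(\sigma^{-i}(\lambda)\mu\big)_i$, so it suffices to prove that $g$ is bijective. This is the standard fact that the absolutely unramified extension $\CO'/\CO$ is Galois: reduction modulo $p$ turns $g$ into the classical isomorphism $k'\tenk k'\cong\prod_{i=0}^{d-1}k'$ (valid because $k'/k$ is a finite Galois extension with $\Gal(k'/k)=\langle\sigma\rangle$), and since $\CO'\tenO\CO'$ is free of finite rank over the complete ring $\CO'$ its idempotents lift those of $k'\tenk k'$; the summand cut out by each lifted primitive idempotent is $\CO'$-free of rank one, hence isomorphic to $\CO'$, and matching idempotents with the residues of $g$ identifies this decomposition with $g$ itself. (Alternatively one may write $\CO'=\CO[\zeta]=\CO[x]/(f)$ for the Teichm\"uller lift $\zeta$ of a generator of $(k')^\times$, whose minimal polynomial $f$ over $\CO$ has separable reduction mod $p$, so $f$ splits over $\CO'$ into $d$ linear factors with pairwise unit differences and the Chinese Remainder Theorem applies.) The genuinely substantive input is this Galois-descent fact, which is where absolute unramifiedness of $\CO'$ gets used; the only other thing to be careful about is the twist bookkeeping — whether $\lambda\ten 1$ acts on ${}^{\sigma^i}U$ by $\sigma^{i}(\lambda)$ or $\sigma^{-i}(\lambda)$ — and since $\{\sigma^i\}_i=\{\sigma^{-i}\}_i$ an error there merely permutes the summands and is harmless.
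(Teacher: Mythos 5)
Your proof is correct and follows the same architecture as the paper's: the natural transformation $\eta_U$ you define (the $\CO'$-linear extension of the inclusion $\delta(U)\subseteq\tau(U)$, which in coordinates is $\lambda\ten x\mapsto(\sigma^{-i}(\lambda)x)_i$) is exactly the one used there, and the reduction to the free rank-one case via exactness of $\epsilon$, $\delta$, $\tau$ and a free presentation is identical. The only genuine divergence is in the base case. The paper picks a primitive element $\alpha\in\CO'$ with $k'=k[\bar\alpha]$, observes that $\{(\alpha^i,\dots,\alpha^i)\}_{0\le i\le d-1}$ is an $\CO$-basis of $\delta(\CO')$, and shows it is an $\CO'$-basis of $\tau(\CO')$ by a Nakayama reduction followed by a Vandermonde determinant computation with the pairwise distinct conjugates $\sigma^j(\alpha)$. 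You instead identify $\eta_{A'}$ with $g\tenO\Id_A$ for the classical Galois map $g:\CO'\tenO\CO'\to\prod_i\CO'$ and prove $g$ bijective by reducing mod $p$ to $k'\tenk k'\cong\prod_i k'$ and lifting idempotents (your parenthetical CRT argument with the Teichm\"uller lift is in fact very close in spirit to the paper's primitive-element computation). Both routes use absolute unramifiedness in the same place, namely to ensure that reduction mod $J(\CO')$ coincides with reduction mod $p$ and that a $k$-basis of $k'$ lifts to an $\CO$-basis of $\CO'$; your version externalises the computation to a standard fact about unramified Galois extensions, while the paper's is self-contained. One small simplification available to you: since $\eta_{A'}$ is a map between free $\CO'$-modules of the same finite rank whose reduction modulo $J(\CO')$ is an isomorphism, Nakayama plus a determinant argument finishes the base case without lifting idempotents. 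Your remark that the ambiguity between $\sigma^i$ and $\sigma^{-i}$ merely permutes the summands of $\tau(U)$ is correct and disposes of the only bookkeeping hazard.
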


\begin{proof}   
Let $\alpha \in \CO'$ be such that $k'=k[ \bar \alpha ]$ where  
$ \bar \alpha =  \alpha +  J(\CO')  \in k' $. Then 
$\{ \bar \alpha^i\, :\, 0\leq i\leq d-1\}$ is a $k$-basis of $k'$.    
Since the extension $\CO \subseteq \CO' $ is unramified $J(\CO') =$
$ J(\CO) \CO' $.  Hence by  Nakayama's lemma  
$\{\alpha^i\, :\, 0\leq i\leq d-1\}$ is an $\CO$-basis of $\CO' $.
Let $U$ be a  finitely  generated $A'$-module  and let 
$$\eta_U : \epsilon\delta (U) \to \tau(U)$$
be the unique  $\CO'$-linear extension of the inclusion   
$\delta(U) \subseteq \tau(U)$. Then $\eta=(\eta_U)$ is a natural 
transformation  from $\epsilon\delta $ to $\tau $.   
We will show that $\eta $ is an isomorphism. It suffices to show that 
this is an $\CO$-linear isomorphism; that is, we may assume that 
$A=\CO$ and $A'=$ $\CO'$. We show first that $\eta_U$ is an 
isomorphism for $U=\CO'$. Since $\{\alpha^i\ :\ 0\leq i\leq d-1\}$ is 
an $\CO$-basis of $\CO'$, it follows that 
$\{(\alpha^i,\alpha^i,...,\alpha^i)\ :\ 0\leq i\leq d-1\}$ is an 
$\CO$-basis of $\delta(\CO')$. We claim that this set is an 
$\CO'$-basis of $\tau(\CO')$.  Since the cardinality of this set is 
equal to $d$, which is also the $\CO'$-rank of $\tau(\CO')$, it 
suffices to show that the image of  this set in 
$k'\ten_{\CO'} \tau(\CO')$ is linearly independent. For notational 
convenience, assume temporarily that $\CO'=k'$. Suppose that
$$\sum_{i=0}^{d-1}\ \lambda_i (\alpha^i,\alpha^i,..,\alpha^i)=0 \ $$
for some coefficients $\lambda_i\in$ $k'$. The scalar $\lambda_i$ acts 
on the $j$-th coordinate as multiplication by $\sigma^{-j}(\lambda_i)$, 
so this is equivalent to the $d$ equations
$$\sum_{i=0}^{d-1} \sigma^{-j}(\lambda_i)\alpha^i =0$$
for $0\leq j\leq d-1$. Applying $\sigma^j$ to the corresponding equation 
implies that this is equivalent to 
$$\sum_{i=0}^{d-1} \lambda_i \sigma^j(\alpha)^i = 0$$
for $0\leq j\leq d-1$.  Note that the $\sigma^j(\alpha)$, with 
$0\leq j\leq d-1$, are pairwise different, and hence the Vandermonde 
matrix $(\sigma^j(\alpha)^i)$ has nonzero determinant. 
Thus all coefficients $\lambda_i$ are $0$. 

Reverting to the ring $\CO'$ as before, this shows that $\eta_U$ is an 
isomorphism if $U=\CO'$. Since $\eta_U$ is additive in $U$, it follows 
that $\eta_U$ is an isomorphism whenever $U$ is free of finite rank 
over $\CO'$. Let $U$ be an arbitrary  finitely generated  $\CO'$-module 
and let $ Q_1\to Q_0 \to U \to 0$  be a free presentation of $U$. By the 
naturality of $\eta $ we obtain  the following commutative diagram

$$\xymatrix{ \epsilon\delta(Q_1) \ar[r]\ar[d] ^{\eta_{Q_1}} 
& \epsilon\delta(Q_0) \ar[r]\ar[d]^{\eta_{Q_0}}
&  \epsilon\delta(U) \ar[d]^{\eta_U} \ar[r] & 0\\
\tau(Q_1)\ar[r] & \tau(Q_0) \ar[r] & \tau(U) \ar[r] & 0 }$$

By the exactness of $\delta $, $\tau$ and $\epsilon$, the horizontal 
rows are exact. Since $Q_1 $ and $Q_0$ are $\CO'$-free, the vertical 
maps $\eta_{Q_1} $ and $\eta_{Q_0}$ are isomorphisms. It follows that  
$\eta_U $ is an isomorphism.
\end{proof}

Let  $G$ be a finite group. For $a=\sum_{g\in G} \alpha_g  g$ 
an element of $k'G$ or of $\CO' G$, denote by $k[a] $ the smallest 
subfield of $k'$ containing $k$ and  (the images in $k'$ of) all 
coefficients $\alpha_g$, $g\in G$. 

\begin{lem}  \label{minimal-galois} 
Let $G$ be finite group. Let $b'$ be a block of $k'G$ and $b$ a block  
of $kG$ such that $bb'  \ne 0 $. Suppose that $k'=k[b']$. Then the 
extension $k'/k$ is finite. Set $d=[k': k]$ and let $\sigma $ be a 
generator of $\Gal(k'/k)$. Then 
$$ b =\sum_{0\leq i\leq d-1}  \sigma^i( b')  $$  
is the   block  decomposition  of $b$ in $k'G$.
\end{lem}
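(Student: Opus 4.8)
The plan is to exploit the fact that block idempotents of $k'G$ are sums of primitive idempotents of the center, and to track how the Galois action permutes them. First I would observe that since $b'$ is a block of $k'G$ with $k'=k[b']$, the field $k'$ is generated over $k$ by finitely many elements of $k'$, hence $k'/k$ is a finite extension; this is the first assertion. Write $d=[k':k]$ and let $\sigma$ generate $\Gal(k'/k)$. Since $b'\in k'G$, for each $i$ the element $\sigma^i(b')$ (applying $\sigma$ to coefficients) is again a primitive idempotent of $Z(k'G)$, because $\sigma$ induces a ring automorphism of $k'G$ fixing $kG$; moreover $\sigma^i(b')$ is again a block of $k'G$.

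Next I would show that the sum $\sum_{0\le i\le d-1}\sigma^i(b')$ lies in $kG$: its coefficients are fixed by $\sigma$, hence lie in the fixed field $k$. Call this sum $b''$. It is a $\sigma$-stable idempotent of $Z(k'G)$, so it lies in $Z(kG)$ and is a sum of blocks of $kG$. To see that $b''$ is in fact a single block, I would argue that the $\sigma^i(b')$ are pairwise orthogonal — indeed two distinct primitive idempotents of $Z(k'G)$ are orthogonal, and the Galois orbit of $b'$ consists of distinct idempotents exactly because $k'=k[b']$ forces the stabiliser of $b'$ in $\Gal(k'/k)$ to be trivial (if $\sigma^j$ fixed $b'$ then it would fix $k[b']=k'$, forcing $j\equiv 0$). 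Hence the orbit has exactly $d$ elements and $b''=\sum_{0\le i\le d-1}\sigma^i(b')$ is genuinely a sum of $d$ distinct blocks of $k'G$; this is precisely the block decomposition of $b''$ in $k'G$ claimed in the statement.

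Finally I would identify $b''$ with $b$. By hypothesis $b$ is a block of $kG$ with $bb'\neq 0$. Since $b=\sum_j b_j'$ is the decomposition of $b$ into blocks of $k'G$ and since $b$ has coefficients in $k$, the set $\{b_j'\}$ is $\Gal(k'/k)$-stable; as $bb'\neq 0$, the block $b'$ is one of the $b_j'$, so its entire Galois orbit $\{\sigma^i(b') : 0\le i\le d-1\}$ is contained in $\{b_j'\}$. Thus $b''$ divides $b$ in the sense that $b''b=b''$. Conversely $b$ divides $b''$: if some $b_j'$ were not in the orbit of $b'$, then the primitive idempotent $e$ of $Z(kG)$ lying under $b_j'$ would be orthogonal to $b''$, yet $e\le b$, contradicting $b''b=b''$ being a proper sub-sum — more cleanly, since both $b$ and $b''$ are $\sigma$-stable idempotents of $Z(kG)$ with $b''\le b$ and $b''\neq 0$, and $b$ is primitive in $Z(kG)$, we get $b''=b$. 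The main obstacle is the bookkeeping that the Galois orbit of $b'$ has full size $d$ and that no other $k'G$-block of $b$ lies outside this orbit; this is handled by the observation that $k'=k[b']$ pins down the orbit-stabiliser exactly, together with the primitivity of $b$ in $Z(kG)$.
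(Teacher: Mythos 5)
Your proof is correct and takes essentially the same route as the paper's: form the Galois orbit of $b'$, note each $\sigma^i(b')$ is a block of $k'G$ meeting $b$, deduce the orbit has full size $d$ from $k'=k[b']$ (trivial stabiliser), and conclude via Galois-stability of the orbit sum and primitivity of $b$ in $Z(kG)$. The one step stated too loosely is the finiteness of $k'/k$: a field extension generated by finitely many elements need not be finite, and the paper instead justifies this by observing that the coefficients of $b'$ lie in a finite subfield of $k'$ (since $G$ has a finite splitting field, block idempotents have coefficients algebraic over $\F_p$) --- though under the section's standing hypothesis that $k$ and $k'$ are finite the point is moot.
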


\begin{proof}  
The block idempotent $b'$ of $k'G$ has coefficients contained in a 
finite subfield of $k'$ (because $G$ has a finite splitting field) and 
hence $k[b']/k$ is a finite extension.  For any $i $, 
$0\leq i \leq d-1 $, $\sigma^i(b')$ is a block of $k'G $ satisfying  
$\sigma^i(b') b= \sigma^i (b'b) \ne 0 $.  Hence we only need to show 
that $ \sigma^i(b') \ne b'$ for any $i $, $0\leq i \leq d-1 $. This 
follows from the fact that $k'=k[b']$ is a finite Galois extension 
with Galois group $\langle \sigma\rangle$. 
\end{proof} 

\begin{thm} \label{thm:further-descent}   
Suppose that $k$ and $k'$ are finite and that $\CO$ and $\CO'$  are 
absolutely  unramified. Let $G$, $H$ be finite groups, $b$ a block of 
$\OG$ and $c$ a block of $\OH$. Let $b'$ be a block of $\CO'G$ 
satisfying $bb'\neq$ $0$ and let $c'$ be a block of $\CO'H$ satisfying 
$cc'\neq$ $0$. Suppose that $k'=$ $k[b']=$ $k[c']$. The following hold.

\begin{enumerate} [(a)]  
\item 
If $\CO'Gb'$ and $\CO' Hc'$ are Morita equivalent via an 
$(\CO' Gb',\CO' Hc')$-bimodule  $M'$, then $\OGb$ and $\OHc$ are Morita 
equivalent via an $(\OGb,\OHc)$-bimodul $M$, such that $M'$ is 
isomorphic to a direct summand of  $\CO'\tenO M$. In particular, if $M'$ 
has a trivial source, then $M$ has a trivial source.

\item
If there is a Rickard complex $X'$ of $(\CO' Gb', \CO' H c')$-bimodules, 
then there is a Rickard complex $X$ of $(\OGb,\OHc)$-bimodules such that
$X'$ is isomorphic to a direct summand of $\CO'\tenO X$. In particular,
if $b'$ and $c'$ are splendidly Rickard equivalent, then $b$ and $c$ are
splendidly Rickard equivalent.

\item
If there is a virtual Morita equivalence (resp. a $p$-permutation 
equivalence) between $\CO'Gb'$ and $\CO'H c'$, then there is
a virtual Morita equivalence (resp. a $p$-permutation equivalence)
between $\OGb$ and $\OHc$.
\end{enumerate}
\end{thm}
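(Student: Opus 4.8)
The plan is to descend each $\CO'$-object by applying the functor $\delta$ of Proposition~\ref{galois-natural}, and then to read off the equivalence it induces by combining Proposition~\ref{galois-natural} with Proposition~\ref{extendProp} and the block decomposition over $\CO'$. Write $d=$ $[k':k]$ and let $\sigma$ generate $\Gal(k'/k)$, together with its canonical lift to an automorphism of $\CO'$; recall that $\CO'$ is then free of rank $d$ over $\CO$, so Lemmas~\ref{lemma-c1}, \ref{lemma-source} and Proposition~\ref{galois-natural} are available for the finitely generated $\CO$-free algebras $\CO Gb$, $\CO Hc$ and $\CO Gb\tenO(\CO Hc)^{\op}$. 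Applying Lemma~\ref{minimal-galois} to the images of $b$, $b'$ in $k'G$ and lifting idempotents gives $b=$ $\sum_{i=0}^{d-1}\sigma^i(b')$ as a sum of pairwise orthogonal blocks of $\CO'G$, hence $\CO'Gb=$ $\bigoplus_{i=0}^{d-1}\CO'G\sigma^i(b')$, and likewise $\CO'Hc=$ $\bigoplus_{i=0}^{d-1}\CO'H\sigma^i(c')$.

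First I would record two auxiliary facts. One: for a finitely generated $(\CO'Gb',\CO'Hc')$-bimodule $N'$ which is projective as a left and as a right module, and for $0\leq i\leq d-1$, the twist ${^{\sigma^i}{N'}}$ is again such a bimodule, now over $(\CO'G\sigma^i(b'),\CO'H\sigma^i(c'))$; since $\sigma^i$ is a ring automorphism of $\CO'$ fixing $\CO$, twisting by it sends free modules to free modules and permutation modules to themselves, preserves vertices, and therefore sends a bimodule (resp. bounded complex of bimodules, resp. virtual bimodule) inducing a Morita equivalence, a stable equivalence of Morita type, a Rickard equivalence, or a virtual Morita equivalence to one of the same kind between the twisted blocks, and preserves the properties of having a trivial source and of being a $p$-permutation bimodule. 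Two: regarding any of the given $\CO'$-objects (a bimodule $M'$, a complex $X'$, or a virtual bimodule $U'=[M'_0]-[M'_1]$) via the relevant block idempotent as an object over $\CO'\tenO(\CO Gb\tenO(\CO Hc)^{\op})$, the $\delta$-image is a finitely generated $\CO$-free $(\CO Gb,\CO Hc)$-bimodule, resp. complex of such, resp. virtual bimodule, and Proposition~\ref{galois-natural} (applied degreewise, resp. additively) yields $\CO'\tenO\delta(N')\cong$ $\bigoplus_{i=0}^{d-1}{^{\sigma^i}{N'}}$. By faithful flatness of $\CO\subseteq\CO'$ (Lemma~\ref{Offlat}) and Lemma~\ref{contract}, all the bimodules produced this way are projective as left $\CO Gb$-modules and as right $\CO Hc$-modules.

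With this in place, for (a) put $M=\delta(M')$. By the second fact $\CO'\tenO M\cong$ $\bigoplus_i{^{\sigma^i}{M'}}$; since the summands are supported on distinct blocks of $\CO'Gb$ on the left and of $\CO'Hc$ on the right, this bimodule induces a block-diagonal Morita equivalence between $\CO'Gb$ and $\CO'Hc$, so Proposition~\ref{extendProp}(c) gives that $M$ induces a Morita equivalence between $\CO Gb$ and $\CO Hc$. As ${^{\sigma^0}{M'}}=M'$, the bimodule $M'$ is a direct summand of $\CO'\tenO M$; and if $M'$ has a trivial source then so has each ${^{\sigma^i}{M'}}$, hence $\CO'\tenO M$ does, and Lemma~\ref{lemma-source} applied to the indecomposable summands of $M$ shows $M$ has a trivial source. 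Part (b) is the exact analogue with $X=\delta(X')$, using Proposition~\ref{extendProp}(a) and the fact that a block-diagonal sum of Rickard complexes is a Rickard complex; for splendidness, twisting preserves trivial sources and vertices, so Lemma~\ref{lemma-c1} and Lemma~\ref{lemma-source} applied termwise to the summands of each $X_n$ give the required $p$-permutation and twisted-diagonal-vertex properties. For (c), put $U=[\delta(M'_0)]-[\delta(M'_1)]$; then $\CO'\tenO$ carries this class to $\sum_i{^{\sigma^i}{U'}}$, and because the summands sit block-diagonally the cross terms in the products $\cdot_{\CO'Hc}$ and $\cdot_{\CO'Gb}$ defining a virtual Morita equivalence vanish, so this class and its dual induce a virtual Morita equivalence between $\CO'Gb$ and $\CO'Hc$; Proposition~\ref{extendProp}(d) then gives one between $\CO Gb$ and $\CO Hc$, and when the original equivalence is a $p$-permutation equivalence, Lemma~\ref{lemma-source} shows $\delta(M'_0)$ and $\delta(M'_1)$ are $p$-permutation bimodules, so the descended one is a $p$-permutation equivalence too.

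The work is essentially bookkeeping: the one point that needs care is checking that each Galois twist $\sigma^i$ preserves the various flavours of equivalence and the trivial-source and vertex conditions, and that the block decomposition $\CO'Gb=$ $\bigoplus_i\CO'G\sigma^i(b')$ makes the products of virtual bimodules over $\CO'$ decouple as a sum over $i$ of the twisted constituents. Once these are in place, the statement reduces cleanly to Propositions~\ref{galois-natural} and~\ref{extendProp}.
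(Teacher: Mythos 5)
Your proposal is correct and follows essentially the same route as the paper: decompose $b$ and $c$ into Galois orbits of blocks via Lemma~\ref{minimal-galois}, observe that $\tau(M')=\bigoplus_i{^{\sigma^i}{M'}}$ induces the relevant equivalence between $\CO'Gb$ and $\CO'Hc$, identify this with $\CO'\tenO\delta(M')$ via Proposition~\ref{galois-natural}, and descend with Proposition~\ref{extendProp} and Lemma~\ref{lemma-source}. Your explicit bookkeeping of the twist-invariance and the block-diagonal decoupling of the products in part (c) only spells out what the paper leaves as ``obvious variations of this argument.''
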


\begin{proof}
Let $\sigma$ be a generator of $\Gal(k'/k)$.  Since $k'=$ $k[b']$ and 
$k'=$ $k[c']$, it follows from \ref{minimal-galois} that the 
$\sigma^i(b')$, $0\leq i\leq d-1$, are pairwise different blocks of
$\CO' G$ whose sum is $b$, and the analogous statement holds for 
$\CO' H c'$ and $c$.  Suppose that $\CO'Gb'$ and $\CO' Hc'$ are Morita 
equivalent via an $(\CO' Gb',\CO' Hc')$-bimodule  $M'$. Then
$\CO' G\sigma^i(b')$ and $\CO' H \sigma^i(c')$ are Morita equivalent
via the bimodule ${^{\sigma^i}{M'}}$. Thus the direct sum $\tau(M')=$
$\oplus_{i=0}^{d-1}\ {^{\sigma^i}{M'}}$  induces a Morita equivalence 
between $\CO' Gb$ and $\CO'H c$. By Proposition \ref{galois-natural}, 
the above direct sum is isomorphic to $\CO'\tenO M$ for some 
$(\OGb,\OHc)$-bimodule $M$. By Proposition \ref{extendProp}, $M$ induces 
a Morita equivalence. It follows from Lemma \ref{lemma-source} that if 
$M'$ has a trivial source, then $M$ has a trivial source. This proves 
(a). Obvious variations of this argument prove (b) and (c).  
\end{proof}

\section{On cyclic blocks} \label{cyclic-Section}

We prove in this section the Theorems \ref{cyclic-splendid-intro} and 
\ref{cyclic-splendid-general}. Modules in this section are finitely 
generated. Let $(K,\CO,k)\subseteq$ $(K',\CO',k')$ 
be an extension of $p$-modular  systems as in Definition 
\ref{defi:pmodext}.

Let $G$ be a finite group and $b$ a block of $\CO'G$ with a nontrivial
cyclic defect group $P$. If $k'$ is a splitting field for all subgroups
of $G$, then Rouquier constructed a $2$-sided splendid tilting
complex $X'$ of $(\CO'Gb, \CO' N_G(P)e)$-bimodules, where $e$ is the
Brauer correspondent of $b$. The hypotheses in \cite{Rouqcyclic}
also require $K'$ to be large enough, but  it is easy to see that 
Rouquier's construction works with $\CO'$  absolutely unramified. In 
order to prove Theorem \ref{cyclic-splendid-intro}, we need to show that 
Rouquier's complex is defined over the subring $\CO$ so 
long as the block idempotent $b$ is contained in $\OG$. We review
Rouquier's construction and other facts  on cyclic blocks as we go 
along. We start with some basic observations regarding automorphisms of 
Brauer trees.

\begin{rem} \label{tree-auto-Rem}
Let $G$ be a finite group and $b$ a block of $\OG$ with a nontrivial 
cyclic defect group $P$. Suppose in addition that $\CO$ contains a 
primitive $|G|$-th root of unity. Any 
ring automorphism $\sigma$ of $\OGb$ permutes the sets of isomorphism 
classes of simple modules, of projective indecomposable modules, and the 
set of ordinary irreducible characters of $\OGb$.
Thus $\sigma$ induces an automorphism of the Brauer tree of $b$.
If $|P|=2$, then $\OGb$ is Morita equivalent to $\CO C_2$, and the 
Brauer tree has a single edge and no exceptional vertex. Thus there 
are two automorphisms of this Brauer tree - the identity, and the 
automorphism exchanging the two vertices, and both are induced by ring 
automorphisms (the automorphism of $\CO C_2$ sending the nontrivial 
group element $t$ of $C_2$ to $-t$ in $\CO C_2$ exchanges the two 
vertices of the tree). If $|P|\geq$ $3$, then the Brauer tree has an 
exceptional vertex or at least two edges. In both cases, an easy 
combinatorial argument shows that an automorphism of the Brauer tree is 
uniquely determined by its effect on the edges of the tree. It follows 
that the automorphism of the Brauer tree induced by a ring automorphism
$\sigma$ of $\OGb$ is already determined by the induced ring
automorphism $\bar\sigma$ of $kG\bar b$, where $\bar b$ is the
image of $b$ in $kG$. This is the reason for why the following
Lemma, which is an immediate consequence of (the proof of)  
\cite[Proposition 4.5, Remark 4.6]{Listable}, is formulated over $k$ 
rather than $\CO$. 
\end{rem}

\begin{lem} \label{tree-auto}
Let $G$ be a finite group and $b$ a block of $kG$ with a nontrivial 
cyclic defect group $P$ of order at least $3$. Suppose that $k$ is a 
splitting field for the subgroups of $G$. Let $\gamma$ be a ring 
automorphism of $kGb$. Then $\gamma$ induces an automorphism of the 
Brauer tree of $b$ which fixes at least one vertex.
\end{lem}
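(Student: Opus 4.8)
The plan is to analyse the combinatorics of automorphisms of a Brauer tree directly, using the additional structure coming from the fact that $\gamma$ is induced by a \emph{ring} automorphism of $kGb$ and hence preserves the Cartan data and the multiplicities attached to the tree. Recall that the Brauer tree of $b$ is a tree $T$ with a planar embedding (cyclic ordering of edges around each vertex), a distinguished \emph{exceptional} vertex, and a multiplicity $m\geq 1$ attached to it; since $|P|\geq 3$ we have either $m\geq 2$ or the number of edges is at least $2$. A ring automorphism $\gamma$ of $kGb$ permutes the simple modules and the projective indecomposables, hence permutes the edges of $T$, and it respects the adjacency relation because $\Ext^1$ between simple modules is an invariant of the algebra; thus $\gamma$ induces a genuine graph automorphism $\bar\gamma$ of $T$. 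Moreover $\gamma$ permutes the ordinary irreducible characters of $kGb$ compatibly with decomposition numbers, and the exceptional characters are exactly those sharing the same restriction pattern to the tree; therefore $\bar\gamma$ must fix the exceptional vertex. So in fact the whole point reduces to: \emph{any graph automorphism of a finite tree which fixes a prescribed vertex also fixes some vertex} — but we must be slightly careful when $m=1$, where there is no intrinsic exceptional vertex and the argument must be purely graph-theoretic.

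First I would dispose of the case $m\geq 2$: here the exceptional vertex is canonically defined and is fixed by $\bar\gamma$ for the reason above, so we are done immediately. Next, assume $m=1$, so $T$ has at least $2$ edges and $\bar\gamma$ is an arbitrary automorphism of the finite tree $T$. Here I invoke the classical fact that every automorphism of a finite tree fixes either a vertex or an edge (equivalently, acts on the tree with a fixed point in its geometric realisation, since a tree is a contractible simplicial complex and a finite group acting on it has a fixed point; or, more elementarily, iteratively deleting all leaves is an automorphism-equivariant operation that terminates in a single vertex or a single edge). If $\bar\gamma$ fixes a vertex we are done. If $\bar\gamma$ fixes an edge $\{u,v\}$ but swaps its two endpoints $u$ and $v$, I would argue that this cannot happen for an automorphism induced by a ring automorphism: the two subtrees hanging off $u$ and off $v$ (after removing the edge) would have to be isomorphic and swapped, forcing in particular that the local multiplicity/valency data at $u$ and at $v$ agree and that $T$ has an even number of edges with a central symmetry; but the fixed edge corresponds to a simple module $S$ with $\gamma(S)\cong S$, and the uniserial structure of the projective cover $P(S)$, whose two "arms" correspond precisely to the two subtrees at $u$ and $v$, is preserved by $\gamma$ in a way that cannot interchange the arms — because the planar (cyclic) structure around each vertex is part of the data preserved by a ring automorphism (the Green correspondence / source algebra structure is invariant), and an interchange of the two arms at $S$ reverses all these cyclic orderings simultaneously, which is incompatible with $\bar\gamma$ being orientation-compatible on a tree with a fixed edge. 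This last point is exactly where I would lean on \cite[Proposition 4.5, Remark 4.6]{Listable}, as the remark preceding the lemma already signals.

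The main obstacle, as the paragraph above makes clear, is ruling out the "edge-flip" case $m=1$ cleanly: a bare graph automorphism of a tree \emph{can} swap the endpoints of a central edge, so one genuinely needs the extra input that $\gamma$ comes from a ring automorphism and therefore respects the planar embedding of the Brauer tree (not just its underlying graph). I would handle this by citing the cited results of Linckelmann on stable equivalences and Brauer tree automorphisms \cite{Listable}, where precisely this compatibility with the cyclic orderings is established; with that in hand, an automorphism respecting the planar structure and fixing an edge must fix each of its endpoints, completing the proof. If one preferred to be self-contained, one could instead observe that a ring automorphism induces a permutation of the projective indecomposables preserving their Loewy/socle series and hence the walk around the tree, and a direct bookkeeping of which edge is "first" in the uniserial structure of a $\gamma$-fixed projective module pins down a fixed vertex; but invoking \cite{Listable} is cleaner and matches the excerpt's stated intent.
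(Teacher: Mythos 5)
Your case division (exceptional vertex present versus absent, then ``fixes a vertex or fixes an edge'') matches the paper's, but the step that actually carries the weight --- ruling out the edge-flip when there is no exceptional vertex --- is where your argument goes wrong, and the paper's own argument is much simpler than what you propose. If the induced tree automorphism fixes an edge and swaps its endpoints, then deleting that edge leaves two subtrees that are interchanged, hence have the same number $t$ of edges, so the whole tree has $2t+1$ edges, which is \emph{odd} (you assert this forces an \emph{even} number of edges, which is backwards). On the other hand, with no exceptional vertex the number of edges equals $|P|-1$, and since the number of edges divides $p-1$ this forces $|P|=p$ and exactly $p-1$ edges, an \emph{even} number because $p\geq 3$. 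That parity clash is the entire proof; no planar or ring-theoretic input beyond ``$\gamma$ permutes the edges'' is needed at this point. You walk right past this observation without using it.

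The substitute you offer --- that a ring automorphism preserves the cyclic orderings around the vertices, and that swapping the two arms at a fixed edge ``reverses all these cyclic orderings simultaneously'' --- does not hold as stated. For a Brauer tree that is a path, every vertex has valency at most $2$, so every cyclic ordering is trivial and the reflection of the path preserves the planar structure while still swapping the endpoints of the central edge; nothing in the planar data forbids this. (It is forbidden only because a path admitting such a central flip has an odd number of edges, which is exactly the parity argument above.) Falling back on citing \cite[Proposition 4.5, Remark 4.6]{Listable} wholesale, as the paper's preceding remark permits, would be legitimate, but the explicit justification you sketch for the key step is not correct, so as written the proposal has a genuine gap precisely in the only nontrivial case.
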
 

\begin{proof}
The statement is trivial if the Brauer tree has an exceptional
vertex (which is necessarily fixed). Suppose that the Brauer tree
does not have an exceptional vertex. Then $|P|=p\geq 3$, and the tree
has $p-1$ edges; note that $p-1$ is even. An easy argument shows that 
any tree automorphism fixes an edge or a vertex. In the latter case, 
we are done, so assume that it fixes an edge, which we label by $i$. 
Removing this edge from the Brauer tree yields two disjoint trees. If 
the two disjoint trees are exchanged by the Brauer tree automorphism, 
then they have the same number $t$ of edges. But then the number of 
edges of the Brauer tree itself is $2t+1$, which is odd, a 
contradiction. Thus the Brauer tree automorphism stabilises the two 
trees obtained from removing the edge $i$. But then it stabilises the
two vertices connected by $i$, whence the result.
\end{proof}

\begin{proof}[Proof of Theorem \ref{cyclic-splendid-intro}]
Since any block of a finite group algebra has a finite splitting
field, we may assume that $k$ and $k'$ are finite.

Rouquier's splendid Rickard complex is constructed inductively,
separating the cases according to whether $G$ has a nontrivial
normal $p$-subgroup or not. The construction of this splendid
equivalence is played back to \cite[Theorem 10.3]{Rouqcyclic}.
It suffices therefore to show that the complexes arising in that
theorem are defined over $\CO$. We start with the case $O_p(G)=$
$\{1\}$. Since $p$-permutation modules of finite groups lift uniquely, 
up to isomorphism, from $k$ to $\CO$, it is easy to see that we
may replace $\CO$ and $\CO'$ by $k$ and $k'$, respectively.
(This simplifies notation, but one could as well write the proof 
over $\CO$ and $\CO'$, if desired.)

Let $H$ be the normaliser in $G$ of the unique subgroup $Z$ of order
$p$ of $P$, and let $c$ be the block of $k'H$ corresponding to $b$
via the Brauer correspondence. Since any block idempotent of $kH$ is
contained in $kC_G(Z)$, we have $\Br_Z(b)=c$. Since $b\in$ $kG$, it 
follows that also $c\in$ $kH$. 

Set $A=$ $kGb$, $A'=$ $k'Gb$, $B=$ $kHc$ and $B'=$ $k'Hc$. By 
\cite[Theorem 10.3]{Rouqcyclic}, there is a splendid Rickard complex 
$X'$ of $(A', B')$-modules of the form
$$\xymatrix{\cdots\ar[r]&0\ar[r]& N'\ar[r]^{\pi'}
&M'\ar[r]&0\ar[r]&\cdots}$$
for some projective $(A', B')$-bimodule $N'$ and some bimodule
homomorphism $\pi'$ such that $(N',\pi')$ is a direct summand
of a projective cover of $M'$. The algebra $B'$ is Morita equivalent
to the serial algebra $k'(P\rtimes E)$, where $E$ is the inertial 
quotient of $b$. That is, the Brauer tree of $B'$ is a star with
$|E|$ edges, and exceptional vertex in the center, if any.
 
By Proposition \ref{extendProp}, in order to prove Theorem 
\ref{cyclic-splendid-intro}, it suffices to prove that there is a 
complex of $A$-$B$-bimodules $X$ satisfying $k'\tenk X\cong$ $X'$. 

The $(A, B)$-bimodule $bkGc$ has, up to isomorphism, a unique 
nonprojective indecomposable bimodule summand $M$. This bimodule
and its dual induce a stable equivalence of Morita type between
$A$ and $B$ (this goes back to Green \cite{Greenwalk}; see
\cite{Licyclic} for a proof using this terminology). As a
$k(G\times H)$-module, the diagonal subgroup $\Delta P$ is a vertex
of $M$. The analogous properties hold for $A'=$ $k'Gb$ and
$B'=$ $k'Hc$. Lemma \ref{lemma-c1} implies that if $M'$ is the
unique (up to isomorphism) nonprojective bimodule summand of
$bk'Gc$, then $M'\cong$ $k'\tenk M$. 

If $|P|=2$, then $M'$, hence $M$, induces a Morita equivalence, and
so we are done in that case. Suppose now that $|P|\geq$ $3$.

The bimodule $M'$ is the right term in Rouquier's complex. For the left 
term, we need to show that $N'\cong$ $k'\tenk N$ for some (by 
\ref{contract} necessarily projective) $(A, B)$-bimodule $N$, and that 
the map $\pi'$ is obtained from applying $k'\tenk-$ to some map 
$\pi : N\to$ $M$. To that end, we need to show that $N'$ 
is $\Gamma$-stable, where as before $\Gamma=$ $\Gal(k'/k)$. This
will follow from Rouquier's description of $N'$, which we review
briefly. 

For that purpose, we need some classical facts on blocks with cyclic 
defect groups which have their origins in work of Brauer, Dade, and 
Green. We follow the presentation given in \cite{LinModules}, 
\cite{Licyclic}.
Denote by $I$ a set of representatives of the conjugacy classes of
primitive idempotents in $A'$, and by $J$ a set of representatives of
the conjugacy classes of primitive idempotents in $B'$.
Set $S_i=$ $A'i/J(A')i$ for all $i\in$ $I$ and $T_j=$ $B'j/J(B')j$ for
all $j\in$ $J'$. 

Using general properties of stable equivalences of Morita type from 
\cite{Listable} and well-known facts on cyclic blocks, it follows that 
the $B'$-modules $\CF(S_i)=$ $M'^*\tenA S_i$ and the $A$-modules 
$\CG(T_j)=$ $M'\tenB T_j$ are indecomposable and uniserial. There are 
unique bijections $\delta, \gamma : I\to $ $J$ such that 
$T_{\delta(i)}$ is isomorphic to the unique simple quotient of 
$\CF(S_i)$ and such that $T_{\gamma(i)}$ is isomorphic to the unique 
simple submodule of $\CF(S_i)$.  For any $i\in$ $I$ there are unique 
uniserial submodules $U_i$ and $V_i$ of $A'i$ isomorphic to 
$\CG(T_{\delta(i)})$ and $\CG(\Omega(T_{\gamma(i)}))$, respectively.
There are unique permutations $\rho$ and $\sigma$ of $I$ such that
the top composition factors of $U_i$ and $V_i$ are isomorphic to
$S_{\rho(i)}$ and $S_{\sigma(i)}$, respectively. In particular,
$A'\rho(i)$ is a projective cover of $U_i\cong$ 
$M'\ten_{B'} T_{\delta(i)}$. Since $B'$ is symmetric, the projective 
indecomposable right $B'$-module $\delta(i)B'$ is a projective cover of 
the simple right $B'$-module $T_{\delta(i)}^\vee$. It follows from the 
description of projective covers of bimodules in 
\cite[Lemma 10.2.12]{Rouqcyclic}, that a projective cover of the 
$(A', B')$-bimodule $M'$ has the form
$$Z' = \oplus_{i\in I}\ A'\rho(i)\tenk \delta(i)B'$$
together with a surjective $(A', B')$-bimodule homomorphism $\pi'$ 
from  $Z'$ onto $M'$.

The permutations $\rho$ and $\sigma$ determine the Brauer tree as
follows. For $i\in$ $I$, denote by $i^\rho$ the 
$\langle\rho\rangle$-orbit of $i$ in $I$; use the analogous notation 
for $i^\sigma$.
The vertices of the Brauer tree are the $\langle\rho\rangle$-orbits and
$\langle\sigma\rangle$-orbits, with exactly one edge labelled $i$ 
linking $i^\rho$ and $i^\sigma$.
Denote by $v$ the exceptional vertex with exceptional multiplicity
$m$; if there is no exceptional vertex, we choose for $v$ a
$\Gamma$-stable vertex (which is possible by \ref{tree-auto}) 
and set $m=$ $1$. Note that there is a unique edge $\rho(i)$
which links $i^\rho=$ $\rho(i)^\rho$ and $\rho(i)^\sigma$. Since there 
is a unique minimal path from $v$ to any other vertex in the Brauer 
tree, it follows that we have a well-defined notion of distance from 
$v$ - this is the number of edges of a minimal path from $v$ to any 
other vertex.

The construction of Rouquier's bimodule complex is based on a
partition of $I$ into two subsets. Note that the vertex
$i^\rho=$ $\rho(i)^\rho$ is linked to the vertex $\rho(i)^\sigma$ via 
the edge labelled $\rho(i)$. Thus exactly one of these two vertices is
further away from $v$ than the other.
We denote by $I_0$ the set of all $i\in$ $I$ such
that the vertex $i^\rho$ of the Brauer tree is further away
from the exceptional vertex $v$ than the vertex $\rho(i)^\sigma$.
In particular $i^\rho$ is nonexceptional in that case.
We set $I_1=$ $I\setminus I_0$; that is, $I_1$ consists of all
$i\in$ $I$ such that $\rho(i)^\sigma$ is further away from $v$ than
$i^\rho$. In particular, $\rho(i)^\sigma$ is nonexceptional in that 
case. Then
$$N' = \oplus_{i\in I_1}\ A'\rho(i)\tenk \delta(i)B'\ $$
This is a direct summand of the above projective cover of $M'$, and
we denote the restriction of $\pi'$ again by $\pi'$. 
Since the action of $\Gamma$ on the Brauer tree fixes $v$, it
follows that the set $I_1$ is $\Gamma$-stable, and hence so is
the isomorphism class of $N'$. It follows from Lemma \ref{lemma-c4}
that there is a projective $(A,B)$-bimodule $N$ such that
$N'\cong$ $k'\tenk N$. To see that the map $\pi'$ can also be
chosen to be of the form $\Id_{k'}\ten \pi$ for some bimodule
homomorphism $\pi : N\to$ $M$, consider a projective cover
$\pi : Z\to$ $M$. Observe that then $k'\tenk Z\cong$ $Z'$ yields
the projective cover of $M'$ above, and Lemma \ref{lemma-c4}
implies that $Z$ has a summand isomorphic to $N$, so we just need
to restrict $\pi$ to $N$ and the extend scalars to $k'$. 

This shows that $A$ and $B$ are splendidly Rickard equivalent. It 
remains to show that the complex in \cite[Theorem 10.3]{Rouqcyclic}
is also defined over $k$ in the case where $O_p(G)$ is nontrivial. 

Set $R=$ $O_p(G)$ and assume that $R\neq$ $\{1\}$. If $R=$ $P$, there 
is nothing further to prove; thus we may assume that $R$ is a proper 
subgroup of $P$. Let $Q$ be the unique subgroup of $P$ such that 
$|Q:R|=$ $p$. Changing earlier notation, set $H=$ $N_G(Q)$, and denote 
by $c$ the block of $k'H$ which is the Brauer correspondent of $b$. We 
have $c=$ $\Br_Q(b)$, and hence $c\in$ $kH$. Set $A=$ $kGb$, $A'=$ 
$k'Gb$, $B=$ $kHc$, and $B'=$ $k'Hc$.

Note that $kG\tenkR kH\cong$ $\Ind^{G\times H}_{R}(k)$ as
$k(G\times H)$-modules. Thus $A\tenkR B$, together with the 
multiplication map $A\tenkR B\to$ $bkGc$, is a relatively 
$\Delta R$-projective presentation of $bkGc$, where we regard this 
bimodule as $k(G\times H)$-module. Thus some bimodule summand of 
$A\tenkR B$ yields a relatively $\Delta R$-projective cover of $bkGc$.

Rouquier's splendid Rickard complex of $(A', B')$-bimodules from
\cite[Theorem 10.3]{Rouqcyclic} is in the present situation a complex 
$X'$ of the form 
$$\xymatrix{\cdots\ar[r]&0\ar[r]& N'\ar[r]^{\pi'}
&M'\ar[r]&0\ar[r]&\cdots}$$
which is a direct summand of the complex
$$\xymatrix{\cdots\ar[r]&0\ar[r]& A'\ten_{k'R} B' \ar[r]^{\pi'}
&bk'Gc\ar[r]&0\ar[r]&\cdots}$$
where $\pi'$ is the map induced by multiplication, where $M'$ is the 
unique (up to isomorphism) indecomposable direct bimodule summand of 
$bk'Gc$ with vertex $\Delta P$, and where either $N'=\{0\}$ or 
$(N',\pi')$ is a relatively $\Delta R$-projective cover of $M'$.  
As before, Lemma 
\ref{lemma-c1} implies that $M'\cong$ $k'\tenk M$, where $M$ is the 
unique indecomposable direct bimodule summand of $bkGc$ with vertex 
$\Delta P$. If $N'=$ $\{0\}$, then $X'$ is the complex $M'$ 
concentrated in degree $0$, so is trivially of the form $k'\tenk X$, 
where $X$ is the complex $M$ concentrated in degree $0$. If 
$N'\neq$ $\{0\}$, then $N'$ is a relatively $\Delta R$-projective 
cover of $M'$. The properties collected in Lemma \ref{lemma-c3} imply 
that this relative projective cover is isomorphic to one obtained
from extending the scalars in a relatively $\Delta R$-projective
cover of $M$, and hence in this case we also get that $X'\cong$
$k'\tenk X$ for some complex $X$. This completes the proof of
Theorem \ref{cyclic-splendid-intro}.
\end{proof} 

\begin{proof}[{Proof  of Theorem~\ref{cyclic-splendid-general}}] 
Denote by $\bar b$ the image of $b$ in $kG$. Write $\bar  b =$
$\sum_{g\in G }\alpha_g g $ with coefficients $\alpha_g\in$ $k$. By  
Proposition~\ref{extendProp} we may assume that $k =\Fp[\bar b]$. Since 
all central idempotents of $\CO G$ belong to $W(k) G$, we may assume 
that $\CO$ is absolutely  unramified. Let $\tilde k$ be a splitting 
field for $G$ containing $k$, and let $\bar b' $ be a block of 
$\tilde kG$ such that $\bar b\bar b'\ne 0$. Set $\tilde\CO=W(\tilde k)$ 
and let $\tilde K$ be the field of fractions of $\tilde \CO $. Let 
$k' =k [\bar b']  \subseteq \tilde k $,  let $\CO' =W(k')$ and let 
$K' $ the field of fractions of $\CO'$. Let $b'$ be the  block of 
$\CO'G $ lifting $\bar b'$. By Lemma~\ref{minimal-galois} $P$ is a 
defect group of $b' $. Let $e'$ be the block of $\CO' N_G(P)$ in 
Brauer correspondence with $b'$. Then $\bar e' \bar e \ne 0 $, 
$k={\mathbb F}_p[\bar e]$ and $k'= k[\bar e']$. By 
Theorem~\ref{cyclic-splendid-intro},  applied to the block $b'$ and 
the extension of $p$-modular systems $(K',\CO', k') \subseteq$   
$(\tilde K, \tilde\CO, \tilde k )$ there is a splendid Rickard complex 
$X'$ of $(\CO' G b', \CO' N_G(P) e')$-bimodules. It follows from 
Theorem \ref{thm:further-descent} that there is a splendid
Rickard complex $X$ of $(\OGb,\CO N_G(P)e)$-bimodules, whence the
result.
\end{proof}

\begin{rem}
Zimmermann showed in \cite{Zimm97} that Rouquier's complex can be 
extended to Green orders, a concept due to Roggenkamp \cite{Rogg92}. 
This might  provide  alternative proofs of the  Theorems 
\ref{cyclic-splendid-intro} and \ref{cyclic-splendid-general}. 
In order to apply Zimmermann's result one would need to show that 
$\OGb$ and $\CO N_G(P)e$ are Green orders whose underlying 
structure data, as required in \cite{Zimm97}, coincide. 
\end{rem}

\section{Descent for Morita equivalences with endopermutation source} 
\label{basic-Section}
  
We briefly recall some notation and general facts about   
endopermutation modules over $p$-groups, which we will use without
further reference. Let $(K, \CO, k)$ be a $p$-modular system, and let
$P$ be a finite $p$-group. By an endopermutation $\OP$-module we will
always mean an endopermutation $\CO$-lattice.
  
By results of Dade \cite{Dadeendo}, the tensor  product of two 
indecomposable endopermutation $\OP$-modules (respectively 
$kP$-modules)  with vertex $P$ has a  unique  indecomposable direct 
summand with vertex $P$; this induces an abelian group structure on the 
set of isomorphism classes of indecomposable endopermutation 
$\OP$-modules (respectively $kP$-modules) with vertex $P$.
The resulting group is denoted $D_\CO(P)$ (respectively $D_{k}(P)$), 
called the {\it Dade group of $P$ over $\CO$ (respectively  $k$)}.      
Let $V$ be an endopermutation $\OP$-module (respectively $kP$-module) 
having an indecomposable direct summand with vertex $P$. For any 
subgroup $Q$ of $P$, the indecomposable direct summands of 
$\Res^P_Q(V)$ with vertex $P$ are all isomorphic, and we  denote by 
$V_Q$ an indecomposable direct summand of $\Res^P_Q(V)$ with vertex 
$Q$. If $V$ is an indecomposable endopermutation $\OP$-module, then
$\bar V=$ $k\tenO V$ is an indecomposable endopermutation 
$kP$-module with the same vertices. 
 
Let $\CF$ be a saturated fusion system on $P$. Following the 
terminology in \cite[3.3]{LiMa} we say that the class $[V]$ of an 
endopermutation $\OP$-module (respectively $kP$-module) $V$ in the 
Dade group $D_\CO(P)$ is {\it $\CF$-stable} if for every isomorphism 
$\varphi : Q\to$ $R$ in $\CF$ between two subgroups $Q$, $R$ of $P$ we 
have $V_Q\cong$ ${_\varphi{V_R}}$. Here ${_\varphi{V_R}}$ is the 
$\OQ$-module (respectively $kQ$-module) which is equal to $V_R$ as an 
$\CO$-module (respectively $k$-module), with $u\in$ $Q$ acting as 
$\varphi(u)$ on $V_R$. The $\CF$-stable classes of indecomposable 
endopermutation $\OP$-modules (respectively $kP$-modules) with vertex 
$P$ form a subgroup of $D_{\CO}(P)$ (respectively $D_k (P))$, denoted 
$D_{\CO}(P,\CF)$ (respectively $D_{k}(P, \CF)$).   
The $\CF$-stability of the class $[V]$ is a slightly weaker condition
than the $\CF$-stability of the actual module $V$. More precisely, an
$\OP$-module $V$ is {$\CF$-stable} if for every isomorphism 
$\varphi : Q\to$ $R$ in $\CF$ between two subgroups $Q$, $R$ of $P$ we 
have $\Res^P_Q(V)\cong$ ${_\varphi{\Res^P_R(V)}}$. If $V$ is an
$\CF$-stable endopermutation $\OP$-module having an indecomposable
direct summand $V_P$ with vertex $P$, then the class $[V_P]$ in
$D_\CO(P)$ is clearly $\CF$-stable. We will need the following result.

\begin{pro}[{\cite[Proposition 3.7]{LiMa}}]  \label{LiMaProp37}
Let $P$ be a finite $p$-group, $\CF$ a saturated fusion system on
$P$ and $V$ and indecomposable endopermutation $\OP$-module with
vertex $P$ such that the class of $[V]$ in $D_\CO(P)$ is $\CF$-stable.
Then there exists an $\CF$-stable endopermutation $\OP$-module $V'$ 
having a direct summand isomorphic to $V$. Moreover, we may choose
$V'$ to have $\CO$-rank prime to $p$, and the analogous result
holds with $k$ instead of $\CO$.
\end{pro}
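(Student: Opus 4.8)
The plan is to upgrade the $\CF$-stability of the Dade class $[V]\in D_\CO(P)$ — a condition that only constrains the vertex-$Q$ cap of each restriction $\Res^P_Q(V)$ — to the existence of a genuine $\CF$-stable endopermutation module $V'$ admitting $V$ as a direct summand, and then to observe that $\CO$-rank prime to $p$ comes along for free. I would carry out the argument over $\CO$; the statement over $k$ is proved in exactly the same way, and can in any case be deduced from it by reduction modulo $J(\CO)$ together with Dade's theorem that an indecomposable capped endopermutation $kP$-module lifts uniquely, up to isomorphism, to $\OP$ (and hence so does any endopermutation $kP$-module, summand by summand), using that the $\CF$-stability of a Dade class and of an actual endopermutation module are both detected on the reduction.

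First I would reduce $\CF$-stability of a module to a finite check. Because $\CF$ is saturated and the twist functors $W\mapsto{}_\varphi W$ compose, an endopermutation $\OP$-module $W$ is $\CF$-stable as soon as $\Res^P_Q(W)\cong{}_\psi\Res^P_Q(W)$ for every fully $\CF$-normalised subgroup $Q\leq P$ and every $\psi\in\Aut_\CF(Q)$; the comparisons along $\CF$-isomorphisms between distinct subgroups and along inclusions then follow formally. Next I would invoke the classification of endopermutation modules to pin down the structure of $\Res^P_Q(V)$: by Dade's theory it decomposes into a number of copies of its cap $V_Q$ together with endopermutation $\OQ$-modules induced from proper subgroups of $Q$, and the hypothesis that $[V]$ is $\CF$-stable says exactly that $V_Q\cong{}_\varphi V_R$ for every $\CF$-isomorphism $\varphi\colon Q\to R$.

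I would then build $V'$ by enlarging $V$ with a direct sum of induced modules $\bigoplus_Q\Ind^P_Q(W_Q)$, where $Q$ runs through a set of representatives of the $\CF$-conjugacy classes of proper subgroups of $P$ and $W_Q$ is an endopermutation $\OQ$-module in the Dade class $[V_Q]$, with the multiplicities and the $W_Q$ chosen by downward induction on $|P:Q|$ so that the non-capped part of each $\Res^P_Q(V')$ becomes one and the same combination of the caps $V_{Q_0}$ for $Q_0\leq Q$. Since those caps are $\CF$-compatible by hypothesis, this forces $V'$ to be $\CF$-stable as a module. That $V'$ stays endopermutation is checked by computing the endomorphism and cross-homomorphism modules between its summands with Mackey's formula and the projection formula, using that $\End_\CO$ of an endopermutation module is a permutation module; the inductive choice of the $W_Q$ is exactly what keeps all these modules permutation. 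Finally, $p\nmid\rk_\CO(V')$ is automatic: every capped endopermutation module with vertex $P$ has $\CO$-rank prime to $p$ — its non-capped restriction summands, being induced from proper subgroups, have $p$-divisible rank, so ranks multiply modulo $p$ through tensor products of generators of $D_\CO(P)$ — and the added summands $\Ind^P_Q(W_Q)$ with $Q$ proper again have $p$-divisible rank, so $\rk_\CO(V')\equiv\rk_\CO(V)\not\equiv 0\pmod p$; should one use a variant construction where this fails, one can instead tensor by an $\CF$-stable permutation module of rank prime to $p$ having a trivial summand, which preserves all the relevant properties.

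The main obstacle is the bookkeeping in the penultimate step: $\CF$-stability of the class sees only the caps of restrictions, so to produce an honestly $\CF$-stable module one must simultaneously (i) know the precise shape of $\Res^P_Q(V)$ for all $Q$, hence the unavoidable appeal to the classification of endopermutation modules, and (ii) choose the correcting induced summands coherently through the subgroup poset, so that each correction at $Q$ is compatible with all those made on subgroups of $Q$ — all of this without destroying the endopermutation property, which is where the interplay between induction, tensor products and permutation modules has to be controlled with care.
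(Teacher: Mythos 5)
The paper does not give a proof of this Proposition: it is quoted from \cite[Proposition 3.7]{LiMa}, with the remark that the rank claim and the version over $\CO$, not explicit in the reference, follow at once from the construction of $V'$ in the proof given there. So the comparison has to be made against \cite{LiMa}.

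Your outline is in the right spirit: build $V'$ from $V$ by adjoining modules induced from proper subgroups of $P$, and the rank claim is then free because such summands have $p$-divisible $\CO$-rank while $V$, a capped endopermutation $\OP$-module, has $\CO$-rank prime to $p$. However, the central step --- that the modules $W_Q$ and their multiplicities can be chosen by downward induction on $|P:Q|$ so that every $\Res^P_Q(V')$ becomes $\Aut_\CF(Q)$-stable while $V'$ stays endopermutation --- is asserted rather than proved, and you acknowledge this yourself. But this \emph{is} the content of \cite[Proposition 3.7]{LiMa}, which proceeds by writing down $V'$ explicitly (essentially as a direct sum of $\Ind^P_Q({}_\varphi V_{\varphi(Q)})$ over subgroups $Q\leq P$ and $\CF$-morphisms $\varphi\colon Q\to P$, suitably normalised) and then verifying $\CF$-stability via the Mackey formula together with the $\CF$-stability of the class $[V]$; without such an explicit recipe the existence of a coherent inductive choice is not obvious, so as written the proposal has a genuine gap at its heart. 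The fallback at the end of your third paragraph does not repair a rank failure: tensoring by an $\CF$-stable permutation module $T$ of rank prime to $p$ multiplies $\CO$-ranks and hence cannot change the residue of $\rk_\CO(V')$ modulo $p$, and it is in any case unclear that $V\tenO T$ would be $\CF$-stable as a module when only the class $[V]$ in $D_\CO(P)$ is assumed $\CF$-stable. Note finally that the paper actually uses only the $k$-version of this Proposition, via Lemma~\ref{lem:stab}, whose whole purpose is to lift that statement to $\CO$ through uniqueness of determinant-one lifts; proving the $\CO$-version directly, as you propose, is legitimate and is what the paper's remark alludes to, but your observation that the $k$-version ``can in any case be deduced'' from it is then the easy direction and not the one the paper needs.
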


The statement on the rank is not made explicitly in 
\cite[Proposition 3.7]{LiMa}, and this Proposition is stated there 
only over $k$, but the slightly stronger version above follows 
immediately from the construction of $V'$ in the proof of that 
Proposition in \cite{LiMa}. 

 
For $P$, $Q$ finite $p$-groups, $\CF$ a fusion system on $P$ and 
$\varphi : P\to$ $Q$ a group isomorphism, we set $\Delta\varphi=$ 
$\{(u,\varphi(u))\ |\ u\in P\}$, and we denote by ${^\varphi{\CF}}$ the 
fusion system on $Q$ induced by $\CF$ via the isomorphism $\varphi$. 
That is, for $R$, $S$ subgroups of $P$, we have 
$\Hom_{{^\varphi{\CF}}}(\varphi(R),\varphi(S))=$
$\varphi\circ\Hom_\CF(R,S)\circ\varphi^{-1}$, where we use the same 
notation $\varphi$, $\varphi^{-1}$ for their restrictions to $S$, 
$\varphi(R)$, respectively. 

The proof of Theorem \ref{thm:basicunramified} requires the following 
Lemma, due to Puig, which summarises some of the essential properties of
stable equivalences of Morita type with endopermutation source.
We assume in the remainder of this section that $k$ is large enough
for all finite groups and their subgroups, so that fusion systems
of blocks are saturated.

\begin{lem}[{\cite[7.6]{Pu99}}] \label{Vvarphi-existence}
Let $G$, $H$ be finite groups, $b$, $c$ blocks of $\OG$, $\OH$ with 
defect groups $P$, $Q$, respectively, and let $i\in$ $(\OGb)^P$ and 
$j\in$ $(\OHc)^P$ be source idempotents. Denote by $\CF$ the fusion 
system on $P$ of $b$ determined by $i$, and denote by $\CG$ the fusion
system on $Q$ determined by $j$. Let $M$ be an indecomposable 
$(\OGb, \OHc)$-bimodule inducing a stable equivalence of Morita type 
with endopermutation source.

Then there is an isomorphism $\varphi : P\to$ $Q$ and an indecomposable 
endopermutation $\Delta\varphi$-module $V$ such that $M$ is isomorphic 
to a direct summand of 
$$\OG i\tenOP \Ind_{\Delta\varphi}^{P\times Q}(V) \tenOQ j\OH$$
as an $(\OGb,\OHc)$-bimodule. For any such $\varphi$ and
$V$, the following hold.

\begin{enumerate} [\rm (a)]

\item  
$\Delta\varphi$ is a vertex of $M$ and $V$ is a source of $M$.

\item  
We have ${^\varphi{\CF}}=$ $\CG$, and when regarded as an $\OP$-module 
via the canonical isomorphism $P\cong$ $\Delta\varphi$, the class 
$[V]$ of the endopermutation module $V$ is $\CF$-stable.
\end{enumerate}
\end{lem}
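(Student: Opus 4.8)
The plan is to follow Puig's route, namely a local analysis carried out through source algebras. I would begin by fixing notation: write $A=$ $i\OG i$ and $B=$ $j\OH j$ for the source algebras of $b$ and $c$, regarded as interior $P$- and $Q$-algebras; they are $p$-permutation algebras, being direct summands of the permutation modules $\OG$, $\OH$ over $P\times P$, $Q\times Q$, and $\OG i$, $j\OH$ induce Morita equivalences $\OGb\simeq A$, $B\simeq\OHc$. I would also record that $\OG\cong$ $\Ind_{\Delta P}^{G\times P}(\CO)$ as an $(\OG,\OP)$-bimodule, where $\Delta P=$ $\{(u,u)\mid u\in P\}$, and dually $\OH\cong$ $\Ind_{\Delta Q}^{Q\times H}(\CO)$. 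The first step is to note that, by standard results on bimodules between block algebras, an indecomposable $(\OGb,\OHc)$-bimodule that is projective as a left and as a right module has, as a $\CO(G\times H)$-module, a vertex of the form $\Delta\psi=$ $\{(u,\psi(u))\mid u\in R\}$ with $R\leq P$ and $\psi:R\to Q$ an injective homomorphism, and an indecomposable source which is an $\CO\Delta\psi$-lattice; by hypothesis this source is an endopermutation module.

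The second step is to show that this vertex is "full", i.e. $R=P$; this is where the stable equivalence is used. From $M\otimes_{\OHc}M^\vee\cong$ $\OGb\oplus(\text{projective})$, the fact that $\OGb$ is relatively $\Delta P$-projective but not relatively projective for any proper subgroup of $\Delta P$, and a Mackey-type estimate for tensor products of bimodules of twisted diagonal vertex, the indecomposable summands of $M\otimes_{\OHc}M^\vee$ have twisted diagonal vertices of order at most $|R|$; since a summand of vertex $\Delta P$ occurs, $R=P$, and, by the symmetric argument for $M^\vee$ together with $|P|=|Q|$, $\psi$ is an isomorphism $\varphi:P\to Q$. Writing $V$ for the source, one has $M\mid\Ind_{\Delta\varphi}^{G\times H}(V)$, and using the descriptions of $\OG$, $\OH$ above one checks that $\OG i\tenOP\Ind_{\Delta\varphi}^{P\times Q}(V)\tenOQ j\OH$ is a direct summand of $\Ind_{\Delta\varphi}^{G\times H}(V)$ into which $M$ embeds as a summand after transporting through the two Morita equivalences; this gives the displayed assertion. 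Part (a) then follows formally: for any $\varphi$ and $V$ as in the statement, $M$ is a summand of $\Ind_{\Delta\varphi}^{G\times H}(V)$, hence relatively $\Delta\varphi$-projective, and since $|\Delta\varphi|=|P|$ equals the order of a vertex of $M$, $\Delta\varphi$ is a vertex; as $V$ is indecomposable and $\Delta\varphi$ is a vertex of $M$ with $M\mid\Ind_{\Delta\varphi}^{G\times H}(V)$, $V$ is a source of $M$.

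The third step proves part (b) by a local analysis, and is the real substance. Regard $V$ as an $\OP$-module via $P\cong\Delta\varphi$, $u\mapsto(u,\varphi(u))$, and for $R\leq P$ set $\Delta\varphi(R)=$ $\{(u,\varphi(u))\mid u\in R\}$. Applying the Brauer construction at $\Delta\varphi(R)$ and using that $i$, $j$ are source idempotents and that $M$, $\OG i$, $j\OH$ have $p$-permutation restriction to twisted diagonal subgroups, $\Br_{\Delta\varphi(R)}(M)$ is — up to Morita equivalences induced by source idempotents of suitable blocks of $\CO C_G(R)$ and $\CO C_H(\varphi(R))$ — a bimodule between Brauer-correspondent blocks which again induces a stable equivalence of Morita type; running this over all $R$ and comparing the resulting local structures forces ${^\varphi{\CF}}=$ $\CG$. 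Moreover the source of $\Br_{\Delta\varphi(R)}(M)$ is controlled on the one side by $V_R$, the vertex-$R$ summand of $\Res^P_R(V)$, and on the other side by the intrinsic structure of that local equivalence; comparing the two descriptions for subgroups $R$, $R'$ joined by a morphism $\phi:R\to R'$ of $\CF$ yields $V_R\cong$ ${_\phi{V_{R'}}}$, that is, the class $[V]$ in $D_\CO(P)$ is $\CF$-stable.

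The hard part is this last step: organising the Brauer constructions $\Br_{\Delta\varphi(R)}(M)$ coherently for all $R\leq P$, identifying each with a bimodule that induces a stable equivalence between the associated local block algebras via source idempotents in the centralisers, and extracting from this simultaneously both the fusion identity ${^\varphi{\CF}}=\CG$ and the $\CF$-stability of $[V]$. This is where one needs Puig's apparatus of pointed groups, local points and multiplicity modules, together with the compatibility of the Brauer construction with the tensor products defining the stable equivalence; by contrast the module-theoretic steps — the twisted diagonal vertex, fullness via Mackey, and part (a) — are comparatively routine.
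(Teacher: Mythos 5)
The paper does not prove this lemma; it is cited directly from Puig \cite[7.6]{Pu99}, with the remark following the statement that a proof may also be found in \cite[9.11.2]{LiBook}. Your sketch follows essentially the same route as those sources: source-algebra reduction; twisted-diagonal vertices for $(\OGb,\OHc)$-bimodules that are projective on both sides; fullness of the vertex ($R=P$, and $\varphi$ an isomorphism) via a Mackey-type estimate on $M\otimes_{\OHc}M^\vee\cong\OGb\oplus(\mathrm{projective})$ together with the fact that $\OGb$ has vertex $\Delta P$ as a $(G,G)$-bimodule; and then Puig's local analysis through Brauer constructions $\Br_{\Delta\varphi(R)}(M)$ to extract the fusion identity ${^\varphi{\CF}}=\CG$ and the $\CF$-stability of $[V]$. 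You correctly single out this last step as the substantial one.

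Two cautions on how you phrase the sketch. The parenthetical ``together with $|P|=|Q|$'' in the fullness step reads as if equality of defect group orders were given in advance; in the argument it is a consequence: $\varphi$ embeds $P$ into $Q$, the symmetric argument applied to $M^\vee$ embeds $Q$ into $P$, and the two together force equality, so the logical dependency runs the other way. More substantively, the claim that $\Br_{\Delta\varphi(R)}(M)$, after Morita equivalences via suitable local source idempotents, is again a bimodule inducing a stable equivalence of Morita type between the associated local blocks is not a routine observation but a nontrivial theorem of Puig on the compatibility of such equivalences with the Brauer construction for bimodules with endopermutation source; that compatibility is exactly where the pointed-group, local-point and multiplicity-module apparatus does real work, and a complete write-up would need to establish or cite it carefully rather than take it as read. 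With that caveat, your sketch is a faithful outline of the proof given in the cited sources.
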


See also  \cite[9.11.2]{LiBook} for a proof of the above Lemma. 

\begin{lem} \label{lem:lift}
Let $G$, $H$ be finite groups, $b$, $c$ blocks of $\OG$, $\OH$, 
respectively, with a common defect group $P$, and let $i\in$ 
$(\OGb)^{\Delta P}$ and $j\in$ $(\OHc)^{\Delta P}$ be source 
idempotents. Suppose that $i$ and $j$ determine the same fusion system 
$\CF$ on $P$. Let $V$ be an indecomposable endopermutation
$\OP$-module with vertex $P$ such that $[V]$ is $\CF$-stable. Consider 
$V$ as an $\CO\Delta P$-module via the canonical isomorphism 
$\Delta P\cong$ $P$. Set 
$$X = \OG i\tenOP \Ind^{P\times P}_{\Delta P}(V) \tenOP j \OH$$
The canonical algebra homomorphism
$$\End_{\CO(G\times H)}(X) \to \End_{k(G\times H)}(k\tenO X)$$
is surjective. In particular, for any direct summand $\bar M$ of 
$k\tenO X$ there is a direct summand $M$ of $X$ such that
$k\tenO M\cong$ $\bar M$.
 \end{lem}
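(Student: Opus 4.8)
The plan is to show surjectivity of the reduction map on endomorphism rings by exploiting the explicit form of $X$ as a relatively $\Delta P$-induced bimodule, and then to deduce the lifting of summands by a standard idempotent-lifting argument. First I would rewrite the endomorphism ring using the adjunctions governing induction and the source idempotents. Since $X = \OG i\tenOP \Ind^{P\times P}_{\Delta P}(V)\tenOP j\OH$, the functors $\OG i\tenOP -$ and its biadjoint $i\OG\tenOG -$ (and similarly on the $H$-side, using that source algebras are symmetric and the source idempotents are self-dual up to the symmetrising form) allow one to express $\End_{\CO(G\times H)}(X)$ in terms of $\End$ of the $\CO\Delta P$-module $\Ind^{P\times P}_{\Delta P}(V)$ inside a truncation by $i$ and $j$ of $\OG\tenOP\OH$-type bimodules; concretely, one lands in a Hom-space of the form $\Hom_{\CO(P\times P)}\!\big(\Ind^{P\times P}_{\Delta P}(V),\, i\OG j \tenOH j\OG i \tenOG \cdots\big)$, i.e. a space built from permutation bimodules (the relevant Green correspondents of $\OG i$, $\OH j$ are $p$-permutation) tensored with the endopermutation module $V\tenO V^\vee$. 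The key point is that all the modules appearing are direct sums of modules of the shape (permutation module)$\,\tenO\,$(endopermutation module with the $\Delta P$-structure), and for such modules the reduction map on endomorphism rings modulo $J(\CO)$ is surjective.

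The central step, then, is the following: if $W$ is an $\CO R$-module ($R$ a finite $p$-group, here $R$ will be a subgroup of $P\times P$ of the form $\Delta P$ or a twisted diagonal) which is a tensor product of a permutation $\CO R$-module and an endopermutation $\CO R$-module, and $W'$ is another such, then $\Hom_{\CO R}(W,W')\to \Hom_{kR}(k\tenO W, k\tenO W')$ is surjective, and more generally the same holds after inducing up to $G\times H$ and truncating by source idempotents. This is because $\Hom_{\CO R}(W,W')\cong (W^\vee\tenO W')^R$, and $W^\vee\tenO W'$ is again of permutation-times-endopermutation type; for such an $\CO R$-lattice $L$ the fixed-point functor $(-)^R$ is compatible with reduction mod $J(\CO)$ in the sense that $L^R\to (k\tenO L)^R$ is surjective — this is the standard fact that $H^1(R, L)$ has no $\CO$-torsion obstruction for $p$-permutation-type lattices, or more directly that such $L$ have an $\CO$-basis permuted by $R$ up to the endopermutation twist, and one can invoke that $p$-permutation modules lift uniquely together with the observation that tensoring by a fixed endopermutation lattice $V$ (which is free as $\CO$-module) is an exact functor compatible with $-\tenO k$. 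I would phrase this cleanly by reducing to $R = \Delta P$ acting on $P_0\tenO V$ with $P_0$ a permutation lattice, where $(P_0\tenO V)^{\Delta P} = \bigoplus_{\text{orbits}} (V)^{\mathrm{Stab}}$ and each stabiliser-fixed-point space of the endopermutation lattice $V$ reduces surjectively mod $J(\CO)$ because $V$ restricted to any subgroup is a permutation lattice up to a unit (being an endopermutation lattice, its restrictions are direct summands of permutation lattices, and one applies $H^1$-vanishing / the Noether–Deuring-type argument of Lemma \ref{lemma-source} and Lemma \ref{contract}).

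Granting surjectivity of $\End_{\CO(G\times H)}(X)\to \End_{k(G\times H)}(k\tenO X)$, the final ``in particular'' clause is immediate: $\End_{\CO(G\times H)}(X)$ is an $\CO$-algebra, finitely generated as $\CO$-module hence semiperfect, and $J(\CO)\End_{\CO(G\times H)}(X)$ is contained in its radical, so idempotents lift along the surjection $\End_{\CO(G\times H)}(X)\to \End_{k(G\times H)}(k\tenO X)$. Given a direct summand $\bar M$ of $k\tenO X$, pick the corresponding idempotent $\bar e\in \End_{k(G\times H)}(k\tenO X)$, lift it to an idempotent $e\in \End_{\CO(G\times H)}(X)$, and set $M = eX$; then $M$ is a direct summand of $X$ with $k\tenO M\cong \bar e(k\tenO X)\cong \bar M$, using that $k\tenO -$ is exact and sends $e$ to $\bar e$.

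The main obstacle I anticipate is the bookkeeping in the first step: untangling $\End_{\CO(G\times H)}(X)$ via the chain of adjunctions for the source-idempotent truncations $\OG i$, $j\OH$ and identifying precisely which permutation-times-endopermutation bimodules over which twisted diagonal subgroups of $P\times P$ occur, so that the cohomological surjectivity lemma applies uniformly. Once that identification is in place — and it is essentially forced by Lemma \ref{Vvarphi-existence} together with the fact that $i\OG i$, $j\OH j$ and $i\OG j$ are $p$-permutation bimodules with twisted-diagonal vertices — the reduction-compatibility of fixed points for $p$-permutation/endopermutation lattices and the idempotent-lifting conclusion are routine.
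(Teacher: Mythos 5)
Your overall architecture (reduce to a fixed-point computation for a lattice over $\Delta P$, use orbit sums of a permutation basis to obtain surjectivity modulo $J(\CO)$, then lift idempotents) is the same as the paper's, and your final idempotent-lifting paragraph is correct. But the decisive step contains a genuine error. You reduce the surjectivity of $\End_{\CO(G\times H)}(X)\to\End_{k(G\times H)}(k\tenO X)$ to the surjectivity of $V^{S}\to(k\tenO V)^{S}$ for various subgroups $S$ of $P$ (the stabilisers in your orbit decomposition of $P_0\tenO V$), and you justify this by asserting that an endopermutation lattice and its restrictions are direct summands of permutation lattices. That is false: ``endopermutation'' means that $V\tenO V^\vee$ is a permutation module, not that $V$ itself is a $p$-permutation module. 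Correspondingly, the fixed-point surjectivity you invoke fails for genuine endopermutation lattices: for $V=\Omega(\CO)=\Ker(\CO P\to\CO)$ with $P$ nontrivial one has $V^P=0$ while $(k\tenO V)^P\neq 0$ (it contains the image of $\sum_{u\in P}u$), so $V^P\to(k\tenO V)^P$ is not surjective. Your reduction has stripped away the dual copy of $V$ that is needed to make the relevant Hom-lattice a permutation module.

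The paper's proof keeps that dual copy in play. After truncating by the source idempotents (a Morita equivalence, so harmless), a single adjunction gives $\End_{A\tenO B^{\op}}(U)\cong\Hom_{\CO\Delta P}(V,U)\cong(V^\vee\tenO U)^{\Delta P}$, where $A=i\OG i$, $B=j\OH j$ and $U=A\tenOP\Ind^{P\times P}_{\Delta P}(V)\tenOP B$. The key structural input, which your proposal is missing, is \cite[Proposition 4.1]{Linendo}: the restriction of $U$ to $\Delta P$ is an endopermutation module having $V$ as a direct summand. Hence $V^\vee\tenO U$ is a direct summand of $U^\vee\tenO U\cong\End_\CO(U)$, which is a permutation $\CO\Delta P$-module by the very definition of an endopermutation module; only now does the orbit-sum argument give surjectivity on fixed points. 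So the gap is not mere bookkeeping: without the fact that the endopermutation factor of the Hom-lattice occurs paired against $V^\vee$ (so that the tensor product lands in the trivial class of the Dade group), the fixed-point surjectivity your argument rests on is simply not available.
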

 
\begin{proof}
Set $A=$ $i\OG i$ and $B=$ $j\OH j$, and set
$$U =  A \tenOP \Ind^{P\times P}_{\Delta P}(V) \tenOP B\ .$$ 
Set $\bar A=$ $k\tenO A$, $\bar B=$ $k\tenO B$, and $\bar U=$ 
$k\tenO U$. Since multiplication by a source idempotent is a Morita 
equivalence, it suffices to show that the canonical map
$$\End_{A\tenO B^\op}(U)\to \End_{\bar A\tenk \bar B^\op}(\bar U)$$
is surjective. By a standard adjunction, we have an isomorphism
$$\End_{A\tenO B^\op}(U) \cong \Hom_{\CO\Delta P}(V, U)\ .$$
Thus we need to show that the right side maps onto the corresponding
expression over $k$. The right side is equal to the space of
$\CO\Delta P$-fixed points in the module
$$\Hom_{\CO}(V, U) \cong V^\vee\tenO U\ .$$
Thus we need to show the surjectivity of the canonical map 
$$(V^\vee\tenO U)^{\Delta P}\to (\bar V^\vee\tenk \bar U)^{\Delta P}
\ .$$
Fixed points in a permutation module with respect to a finite group
action over either $\CO$ or $k$ are spanned by the orbit sums of a 
permutation basis, and hence for the surjectivity of the previous map 
it suffices to show that 
$$V^\vee\tenO U$$
is a permutation $\CO\Delta P$-module. By 
\cite[Proposition 4.1]{Linendo}, as an $\CO\Delta P$-module, $U$ is an 
endopermutation $\CO\Delta P$-module having $V$ as a direct summand. 
Thus $U^\vee\tenO U$ is a permutation $\CO\Delta P$-module having 
$V^\vee \ten U$ is a direct summand. In particular, $V^\vee\tenO U$ is
a permutation $\CO\Delta P$-module as required. The last statement
follows from lifting idempotents.
\end{proof}
  
As a consequence of the classification theorem of endopermutation 
modules over finite $p$-groups, if $U$ is an endopermutation 
$kP$-module having an indecomposable direct summand with vertex $P$, 
then there exists an endopermutation $\OP$-module $V$ such that
$\bar V\cong  U$  (see  \cite[Theorem 14.2]{ThevLausanne}). In 
particular, the canonical map $D_\CO(P)\to$ $D_k(P)$ is surjective
(cf. \cite[Corollary 8.5]{Bouc1}). By standard properties of 
endopermutation modules, the kernel of this map is $\Hom(P,\CO^\times)$.
Further, for any  saturated fusion system $\CF$ on $P$, if $V$ or
its class $[V]$ is $\CF$-stable, then  $\bar V$  or its class
$[\bar V]$ is $\CF$-stable, respectively. In particular, the surjection 
$D_\CO(P)\to$ $D_k(P)$ restricts to a map from $D_\CO(P,\CF)$ to 
$D_k(P,\CF)$. For fusion systems of finite groups, the following
result has also been observed by Lassueur and Th\'evenaz 
in \cite[Lemma 4.1]{LaTh}.

\begin{lem}\label{lem:stab}   
Let $P$ be a finite $p$-group and $\CF$ a saturated fusion system on 
$P$. The canonical map $D_\CO(P,\CF)\to$ $D_k(P,\CF)$ is surjective.
\end{lem}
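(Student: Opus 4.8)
The plan is to reduce the statement to the known surjectivity of the underlying Dade-group map $D_\CO(P)\to D_k(P)$ together with the lifting result in Proposition~\ref{LiMaProp37}. First I would start with an element of $D_k(P,\CF)$, represented by an indecomposable endopermutation $kP$-module $\bar V$ with vertex $P$ whose class $[\bar V]$ is $\CF$-stable. By the classification-of-endopermutation-modules consequence recalled just before the statement (the surjectivity of $D_\CO(P)\to D_k(P)$, see \cite[Theorem 14.2]{ThevLausanne} and \cite[Corollary 8.5]{Bouc1}), there is an indecomposable endopermutation $\OP$-module $V$ with vertex $P$ such that $\bar V\cong k\tenO V$; note $V$ automatically has vertex $P$ since reduction mod $J(\CO)$ preserves vertices of endopermutation modules. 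The issue is that the class $[V]$ in $D_\CO(P)$ need not itself be $\CF$-stable: the kernel of $D_\CO(P)\to D_k(P)$ is $\Hom(P,\CO^\times)$, and a priori $[V]$ could differ from an $\CF$-stable lift by a non-$\CF$-stable element of that kernel.

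So the key step is to correct $V$ within its fibre. I would argue that the subgroup $\Hom(P,\CO^\times)$ of $D_\CO(P)$ consists of classes of one-dimensional $\OP$-modules, and that $\CF$-stability of such a class is governed by the induced map $P/\foc(P,\CF)\to\CO^\times$ (where $\foc$ denotes the focal subgroup of $\CF$); concretely, a linear character is $\CF$-stable precisely when it is trivial on all commutators $\varphi(u)u^{-1}$ arising from morphisms $\varphi$ in $\CF$. Since $\bar V$ has $\CF$-stable class over $k$, the obstruction to $[V]$ being $\CF$-stable over $\CO$ lies in the subgroup of $\Hom(P,\CO^\times)$ of characters that become trivial modulo $J(\CO)$, i.e. $\Hom(P,1+J(\CO))$; but $1+J(\CO)$ has no nontrivial elements of $p$-power order when the residue field has characteristic $p$ (an element $1+x$ with $x\in J(\CO)$ has infinite multiplicative order unless $x=0$), so $\Hom(P,1+J(\CO))=\{1\}$ as $P$ is a $p$-group. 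Hence the obstruction vanishes and $[V]$ is already $\CF$-stable in $D_\CO(P)$. This shows the map $D_\CO(P,\CF)\to D_k(P,\CF)$ hits the class of $\bar V$.

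Alternatively, and perhaps more cleanly, once one knows $[\bar V]$ is $\CF$-stable one can invoke Proposition~\ref{LiMaProp37} over $k$ to produce an $\CF$-stable endopermutation $kP$-module $\bar V'$ of rank prime to $p$ containing $\bar V$ as a summand, lift $\bar V'$ to an endopermutation $\OP$-module $V'$ over $\CO$ using surjectivity of $D_\CO(P)\to D_k(P)$ on the indecomposable vertex-$P$ summand, and then run the rank argument to pin down $\CF$-stability of the lift; the rank being prime to $p$ is what rigidifies things. Either way the substance is the same: the fibre of $D_\CO(P)\to D_k(P)$ over an $\CF$-stable class contains an $\CF$-stable element because the ambiguity group $\Hom(P,\CO^\times)$ maps injectively on its $\CF$-relevant part into $\Hom(P,k^\times)$, as $\Hom(P,1+J(\CO))$ is trivial.

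The main obstacle is exactly this fibre-correction step: one must be careful that $\CF$-stability of the Dade class, as opposed to $\CF$-stability of an actual module, is what is being tracked, and that the kernel $\Hom(P,\CO^\times)$ does not introduce spurious non-$\CF$-stable lifts. Everything else — surjectivity of $D_\CO(P)\to D_k(P)$, preservation of vertices under reduction, and the fact that $\CF$-stability of a module implies $\CF$-stability of its class — is standard and already recorded in the preceding paragraph of the excerpt.
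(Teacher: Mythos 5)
There is a genuine gap, and it is located precisely where you flag the main obstacle. Your key claim --- that $\Hom(P,1+J(\CO))$ is trivial because ``an element $1+x$ with $x\in J(\CO)$ has infinite multiplicative order unless $x=0$'' --- is false for a general $p$-modular system, and Lemma~\ref{lem:stab} is stated (and needed, in the proof of Theorem~\ref{thm:basicunramified}) for arbitrary $\CO$. If $\CO$ contains a primitive $p$-th root of unity $\zeta_p$ (e.g.\ $\CO=\Z_p[\zeta_p]$), then $\zeta_p-1\in J(\CO)$, so $\zeta_p\in 1+J(\CO)$ is a nontrivial element of order $p$; more generally, since $P$ is a $p$-group, the entire kernel $\Hom(P,\CO^\times)$ of $D_\CO(P)\to D_k(P)$ already lies inside $\Hom(P,1+J(\CO))$, and this is nontrivial whenever $\CO$ is sufficiently ramified. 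Your ``fibre-correction'' argument amounts to asserting that $D_\CO(P)\to D_k(P)$ has trivial kernel (hence is an isomorphism), which would make the lemma a triviality; this is exactly what fails.

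Your alternative sketch is closer to the paper's actual proof, but you do not supply the mechanism, and then you explicitly collapse it back onto the false claim (``Either way the substance is the same \ldots as $\Hom(P,1+J(\CO))$ is trivial''). The two arguments are not the same. The paper first applies Proposition~\ref{LiMaProp37} over $k$ to replace $U$ by an $\CF$-stable $kP$-module $U'$ of dimension prime to $p$, then lifts $U'$ to the unique $\OP$-lattice $V'$ of determinant~$1$ (uniqueness uses that $\rank U'$ is prime to $p$ together with $\Hom(P,\CO^\times)$ being $p$-torsion, see \cite[Lemma (28.1)]{Thev}). Because restriction to a subgroup $Q$ and twisting along an $\CF$-isomorphism $\varphi$ both preserve the determinant-$1$ property, the uniqueness of the determinant-$1$ lift at each level forces $V'$ to be $\CF$-stable as an actual $\OP$-module, whence its vertex-$P$ summand $V$ has $\CF$-stable class and $\bar V\cong U$. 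The point is that one does not need the fibre of $D_\CO(P)\to D_k(P)$ to be a singleton; one needs to single out a canonical element of the fibre and show that the normalisation commutes with restriction and twisting. That is what the determinant-$1$ condition, together with the coprimality of the rank to $p$, accomplishes, and it is precisely the step your write-up leaves unexplained.
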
 

\begin{proof}  
Let $U$ be an indecomposable endopermutation $kP$-module with vertex 
$P$ such that the class $[U]$ of $U$ is in $D_k(P;\CF)$. By Proposition 
\ref{LiMaProp37} there is an $\CF$-stable endopermutation $kP$-module 
$U'$ of dimension prime to $p$ having a direct summand isomorphic to 
$U$. By the remarks at the beginning of this section, there is an 
endopermutation $\OP$-module $V'$ of determinant $1$ such that 
$\bar V'\cong$ $U$. Moreover, the determinant $1$ condition implies 
that $V'$ is unique up to isomorphism (see e. g. 
\cite[Lemma (28.1)]{Thev}). Then, for $Q$ a subgroup of $P$, the
$\OQ$-module $\Res^P_Q(V')$ of $V'$ is also the unique - up to 
isomorphism - lift of the $kQ$-module $\Res^P_Q(U')$ with determinant 
$1$, and for $\varphi : Q\to$ $R$ an isomorphism in $\CF$, the 
$\OQ$-module ${_\varphi{\Res^P_R(V')}}$ is the lift with determinant $1$
of the $kQ$-module ${_\varphi{\Res^P_R(U')}}$. Thus the $\CF$-stability 
of $U'$ implies that $V'$ is an $\CF$-stable $\OP$-module. But then the 
class of $V$ is $\CF$-stable in $D_\CO(P;\CF)$. By construction, we 
have $\bar V\cong$ $U$, proving the result.
\end{proof}

\begin{proof}[{Proof  of Theorem~\ref{thm:basicunramified}}]
Let $\bar M$ be an indecomposable $(kG\bar b,kH\bar c)$-bimodule 
inducing a Morita equivalence (resp. stable equivalence of Morita type).
Suppose that $\bar M$ has  endopermutation source $\bar V$. By Lemma 
\ref{Vvarphi-existence}, we may identify a defect group $P$ of $b$ with 
a defect group of $c$, such that $\bar M$ is a direct summand of
$$kG\bar i\tenkP \Ind^{P\times P}_{\Delta P}(\bar V)\tenkP \bar j kH$$
for some source idempotents $\bar i$, $\bar j$ of $\bar b$, $\bar c$.
Moreover, still by Lemma \ref{Vvarphi-existence}, these two source 
idempotents determine the same fusion system $\CF$ on $P$, and the class 
$[\bar V]$ in $D_k(P)$ is $\CF$-stable, where here $\bar V$ is regarded 
as a $kP$-module. By Lemma \ref{lem:stab} there is an endopermutation 
$\OP$-module $V$ satisfying $\bar V\cong$ $k\tenO V$ such that $[V]$ 
is $\CF$-stable in $D_\CO(P)$. It follows from Lemma \ref{lem:lift} 
that there is a direct summand $M$ of
$$\OG i\tenOP \Ind^{P\times P}_{\Delta P}(V)\tenOP  j \OH$$
satisfying $\bar M\cong$ $k\tenO M$, where $i$, $j$ are source 
idempotents lifting $\bar i$, $\bar j$. By construction, $M$ has vertex 
$\Delta P$ and source $V$, and by Proposition \ref{extendProp}, $M$ 
induces a Morita equivalence (resp. stable equivalence of Morita type). 
This proves (a). A Morita equivalence (resp. stable equivalence of 
Morita type) between $\OGb$ and $\OHc$ with endopermutation source 
induces clearly such an equivalence with endopermutation source between 
$kG\bar b$ and $kH\bar c$, and by (a), this lifts back to an equivalence
between $W(k)Gb$ and $W(k)Hc$ with the properties as stated in (b).
\end{proof} 


\end{document}